\providecommand{\U}[1]{\protect\rule{.1in}{.1in}}
\providecommand{\U}[1]{\protect\rule{.1in}{.1in}}
\newcommand{\BE}{\begin{equation}}
\newcommand{\EE}{\end{equation}}
\numberwithin{equation}{section}
\newtheorem{proposition}{Proposition}[section]
\newtheorem{theorem}[proposition]{Theorem}
\newtheorem{lemma}[proposition]{Lemma}
\newtheorem{remark}[proposition]{Remark}
\def\dfrac{\displaystyle\frac}
\def\dsum{\displaystyle\sum}
\begin{document}

 \title{\textbf{A fast linearized finite difference method for the nonlinear multi-term time-fractional wave equation}}

 \author{Pin Lyu\thanks{Email: plyu@swufe.edu.cn. School of Economic Mathematics, Southwestern University of Finance and Economics, Chengdu, China. }
 \and Yuxiang Liang \thanks{Email: 13026876611@163.com. School of Applied Mathematics, Guangdong University of Technology, Guangzhou 510006, Guangdong, China.}
 \and Zhibo Wang\thanks{Corresponding author. Email: wzbmath@gdut.edu.cn. School of Applied Mathematics, Guangdong University of Technology,
Guangzhou 510006, Guangdong, China.}}
\date{}
 \maketitle\normalsize

 \begin{abstract}
 In this paper, we study a fast and linearized finite difference method to solve the nonlinear time-fractional wave equation with multi fractional orders. We first propose a discretization to the multi-term Caputo derivative based on the recently established fast ${\cal L}2$-$1_\sigma$ formula and a weighted approach. Then we apply the discretization to construct a fully fast linearized discrete scheme for the nonlinear problem under consideration. The nonlinear term, which just fulfills the Lipschitz condition, will be evaluated on the previous time level. Therefore only linear systems are needed to be solved for obtaining numerical solutions. The proposed scheme is shown to have second-order unconditional convergence with respect to the discrete $H^1$-norm. Numerical examples are provided to justify the efficiency.
 \end{abstract}

 {{\bf Key words:}
 nonlinear fractional equation; multi-term derivative; fast algorithm; linearized method; second-order scheme
 }


\section{Introduction}

Fractional derivatives were found to be more accurate tools to describe diverse materials and processes which present memory and hereditary properties, thus it gives rise to great interest in the study of fractional differential equations (FDEs). However, the closed-form solution of most of FDEs are hardly to be obtained.  Investigating the efficient numerical methods for FDEs becomes a popular and also urgent topic.

Recently, the multi-term time-fractional diffusion and wave equations, which is the single fractional order in the FDEs generalized to multi-orders \cite{Hesameddini,Podlubny,multitermsub-d}, were successfully applied to model various types of visco-elastic damping \cite{Shiralashetti} and to describe the phenomenon of subdiffusion of oxygen in both transverse and longitudinal directions \cite{Srivastava}.
Jin et al. studied the multi-term fractional diffusion equation by using the Galerkin finite element method where its extraordinary capability of modeling anomalous diffusion phenomena in highly heterogeneous aquifers and complex viscoelastic materials were mentioned \cite{Galerkin finite element}.
The theoretical and numerical methods to the solution of linear multi-term time-fractional equations were studied by many researchers \cite{Daftardar2008,Dehgha2015,LiuF2013,Luchko2011}. But the studies of numerical solutions to the nonlinear multi-term time-fractional wave equations are still limited.

In this paper, we consider an efficient numerical method to the nonlinear multi-term time-fractional wave equation:
\begin{align}\label{eq1}
&\sum_{r=0}^m\lambda_r~ {_0^CD}_t^{\alpha_r} u=\frac{\partial^2 u}{\partial x^2}+f(u)+p(x,t),\quad x\in\Omega,~t\in(0,T],\\\label{eq2}
&u(x,t)=0,\quad x\in\partial\Omega,\quad t\in(0,T],\\\label{eq3}
&u(x,0)=\varphi(x),~u_t(x,0)=\psi(x),\quad x\in\bar\Omega,
\end{align}
where $\Omega=(x_L,x_R)$, $1<\alpha_m<\cdots<\alpha_0<2$, and the positive weights $\lambda_r$ fulfills
$\dsum_{r=0}^m\lambda_r\leq C$. Here and hereafter, $C$ always denotes a generic positive constant.
Moreover, ${_0^CD}_t^{\delta}$ denotes the Caputo derivative of order $\delta$:
$${_0^CD}_t^{\delta}u(t)=\frac1{\Gamma(n-\delta)}\int_{0}^t\frac{u^{(n)}(s)}{(t-s)^{\delta+1-n}}ds,\quad n=\lceil{\delta}\rceil.$$
The nonlinear function $f$ in \eqref{eq1} satisfies the Lipschitz condition:
$$|f(\phi_1)-f(\phi_2)|\leq L|\phi_1-\phi_2|,~~\forall\phi_1,~\phi_2\in\Omega_f,$$
where $\Omega_f$ is a suitable domain, and $L$ is a positive constant only depends on the domain $\Omega_f.$

The nonlocal dependence of fractional derivative is inherited by its discretizations.  Thus it should be necessary to develop high accurate and/or fast algorithms to FDEs in order to save the computational costs and the memory storage. Moreover, the  problem under consideration is a nonlinear time-fractional equation, and  classical nonlocal numerical schemes blending with iterative methods would take more expensive computation and have more complexity in the analysis. So it is really competitive and also necessary to construct linearized numerical methods for nonlinear time-fractional problems \cite{LiaoYanZhang:2018,LyuVongKGS,LyuVong-AML2018,LyuVongNA2017,VongLyu-JSC2018}.
 It is worth to mention that the solution of many time-fractional differential equations typically displays a weak singularity near the initial time \cite{Jin-IMA2016,Jin-SIAM2016,Liao-SIAM2018,McLeanMustapha:2007,MustaphaMcLean:2013,Mustapha-IMA2014,SakamotoYamamoto:2011,Stynes-SIAM2017,Stynes-FCAA2016}, which leads to the loss of time accuracy  for many related high-order numerical methods.
 The nonuniform grids technique \cite{Liao-SIAM2018,Stynes-SIAM2017} is a very popular method to recover the full accuracy very recently.
 Since our proposed linearized method for the considered nonlinear time-fractional problem is based on a weighted approach,  the analysis will be very difficult if the grids are nonuniform.  In view of the facts, based on the fast ${\cal L}2$-$1_\sigma$ formula (named ${\cal FL}2$-$1_\sigma$) investigated in \cite{FL21}, we construct a fast linearized finite difference scheme on uniform grids to solve the nonlinear multi-term time fractional wave equation. This may offset the possible accuracy loss in the computational sense.

Traditional direct numerical methods for time-fractional PDEs require ${\cal O}(MN)$ memory and ${\cal O}(MN^2)$ work,  where $M$ and $N$ denote the total number of space steps and time steps, respectively.
Based on an efficient sum-of-exponentials (SOE) approximation for the kernel function $t^{-\beta-1}$ on the interval $[\tau,T]$, where $\beta\in(0,1)$ and $\tau$ is the time step size, and combined with the classical ${\cal L}1$ discretization \cite{LinXu2007,Oldham1974,SunWu:2006}, Jiang et al. \cite{FL1} proposed an efficient fast evaluation to the Caputo derivative, and then applied it to solve time-fractional diffusion equations.
The corresponding fast numerical algorithm requires only ${\cal O}(MN_{\exp})$ memory and ${\cal O}(MNN_{\exp})$ work, where $N_{\exp}$ is the number of exponentials and of order ${\cal O}(\log N)$.  Based on the ${\cal L}2$-$1_\sigma$ discretization \cite{AAA}, Yan et al. \cite{FL21} then constructed a fast and second-order ${\cal FL}2$-$1_\sigma$  numerical method for time-fractional diffusion equations. The computational cost just require ${\cal O}(MN\log ^2N)$ and the overall storage is ${\cal O}(M\log ^2N)$.

In this paper, based on the ${\cal FL}2$-$1_\sigma$ formula in \cite{FL21} and the weighted idea in \cite{LyuVongNA2017}, we first approximate the multi-term derivative in \eqref{eq1} at time point $t_n$ ($n\geq1$) by a weighted discretization which will solve the function to time level $t_{n+1}$, and give a fitted time approximation on the diffusion term.
Then we apply the proposed approximation to construct a linearized and fast finite difference scheme to solve the nonlinear equation \eqref{eq1}--\eqref{eq3}, which will evaluate the nonlinear term at previous time level.
Thus only linear systems are needed to be solved for obtaining approximated solutions.
With some important rigorously verified  properties of the  discrete coefficients, we show that our proposed fast linearized numerical scheme is unconditionally convergent with the order ${\cal O}(h^2+\tau^2+\epsilon)$, where $h$ is the spatial step size and $\epsilon$ is the tolerance error in the approximation of SOE to the kernel.

The structure of the paper is as follows. In section \ref{Sec-approximation}, based on the ${\cal FL}2$-$1_\sigma$ formula and a weighted approach, we propose a discretization to approximate the multi-term Caputo derivative at  time grid point $t_{n}$ ($n\geq 1$). With some necessary properties of the corresponding coefficients being verified, we give the truncation error analysis for the proposed discretization.
Then applying the discretization and using a special approximation for first time level solution, we construct a fully fast-linearized finite difference scheme to solve the considered problem \eqref{eq1}--\eqref{eq3}. In section \ref{Sec-conver}, we first estimate some important properties of the discrete coefficients, and then we show that our proposed fast-linearized scheme is unconditionally convergent with second-oder accuracy with respect to discrete $H^1$-norm.
In section \ref{Sec-numer}, numerical examples are carried out to confirm the efficiency of the numerical scheme. A brief conclusion is followed in section \ref{conclusion}.

\section{The fast linearized numerical method}\label{Sec-approximation}
Notations for clarifying some coefficients and parameters:
\begin{itemize}
\item  $g_k^{(n+1,\beta)}$ ----- the ${\cal L}2$-$1_\sigma$ type coefficients;
\item  ${\widehat g}_{k}^{(n+1)}$ ----- the ${\cal L}2$-$1_\sigma$ type coefficients of multi-term Caputo derivative;
\item  $N^{(\beta)},s_j^{(\beta)},\omega_j^{(\beta)},{\widehat \omega}_j^{(\beta)}$ ----- the corresponding parameters of SOE approximation;
\item  $^{\cal F}g_k^{(n+1,\beta)}$ ----- the ${\cal FL}2$-$1_\sigma$ type coefficients;
\item  $^{\cal F}{\widehat g}_{k}^{(n+1)}$ ----- the ${\cal FL}2$-$1_\sigma$ type coefficients of multi-term Caputo derivative;
\item  ${\bf g}_k^{(n+1)}$ ----- the refined ${\cal FL}2$-$1_\sigma$ type coefficients of multi-term Caputo derivative.
\end{itemize}
\subsection{Preliminary}
We first introduce some temporal notations. For a given positive integer $N$, denote the uniform time step size by $\tau=\frac TN$, and take $t_n=n\tau~(0\leq n\leq N),~t_{n+\sigma}~(\sigma\in[0,1])$. Let  ${\cal W}_\tau=\big\{w^n|0\leq n\leq N\big\}$.
For any $u^n\in{\cal W_\tau},$ denote
$$\delta_tu^{n+\frac12}=\frac{u^{n+1}-u^n}\tau,~~\delta_{\hat t}u^n=\frac{u^{n+1}-u^{n-1}}{2\tau}.$$
For simplicity, we denote the multi-term derivative by
$${_\lambda}{D}_t^\alpha u(t)=\sum_{r=0}^m\lambda_r~ {_0^CD}_t^{\alpha_r}u(t).$$
In the rest of the paper, we take $v=u_t$, $\alpha_r=\beta_r+1$ or $\alpha=\beta+1$ when it is required. Then
\begin{align}\label{relation-Dv-Du}
{_\lambda D}_t^\alpha u(t)={_\lambda D}_t^\beta v(t).
\end{align}

Our approximation to fractional derivative is based on the ${\cal L}2$-$1_\sigma$ discretization in \cite[Lemma 2]{AAA},
we first introduce the corresponding coefficients. For $\beta\in(0,1)$, denote
\begin{align*}
&a_0^{(\beta)}=\sigma^{1-\beta},\quad  a_l^{(\beta)}=(l+\sigma)^{1-\beta}-(l-1+\sigma)^{1-\beta},~\mu^{(\beta)}=\frac{\tau^{-\beta}}{\Gamma(2-\beta)},\\
&b_l^{(\beta)}=\frac1{2-\beta}[(l+\sigma)^{2-\beta}-(l-1+\sigma)^{2-\beta}]-\frac12[(l+\sigma)^{1-\beta}+(l-1+\sigma)^{1-\beta}], \quad l\geq 1,
\end{align*}
where the particular $\sigma$ (here and hereafter) is generated in Lemma \ref{trunerror2}, which may be different from the $\sigma$ in \cite{AAA}.
Further denote $g_0^{(1,\beta)}=\mu^{(\beta)}a_0^{(\beta)}$ and
\begin{equation}\notag
g_k^{(n+1,\beta)}=\mu^{(\beta)}\cdot\left\{\begin{array}{ll}
a_n^{(\beta)}-b_n^{(\beta)},\quad &k=0,\\
a_{n-k}^{(\beta)}+b_{n-k+1}^{(\beta)}-b_{n-k}^{(\beta)},\quad &1\leq k\leq n-1,\\
a_0^{(\beta)}+b_1^{(\beta)},\quad &k=n,
\end{array}\right.\quad n\geq 1.
\end{equation}
The high-order approximation proposed in \cite{multiterm} to multi-term Caputo derivative is:
\begin{align*}
{\cal ^H {\widehat D}}_t^\beta v(t_{n+\sigma})\triangleq&\sum_{r=0}^m\frac{\lambda_r}{\Gamma(1-\beta_r)}\bigg[\sum_{k=1}^{n}\int_{t_{k-1}}^{t_k}(\Pi_{2,k}v(s))'(t_{n+\sigma}-s)^{-\beta_r} ds\\
&+\int_{t_{n}}^{t_{n+\sigma}}(\Pi_{1,n}v(s))'(t_{n-1+\sigma}-s)^{-\beta_r} ds \bigg]\\
=&\sum_{k=0}^{n} {\widehat g}_{k}^{(n+1)}[v(t_{k+1})-v(t_{k})],
\end{align*}
where $\Pi_{2,k}$ and $\Pi_{1,n}$ are the quadratic and linear polynomials \cite{multiterm}, respectively; and
$${\widehat g}_{k}^{(n+1)}=\sum_{r=0}^m\lambda_rg_k^{(n+1,\beta_r)}.$$
The next lemma is given with respect to the truncation error between ${_\lambda D}_t^\beta v(t_{n+\sigma})$ and ${\cal ^H{\widehat D}}_t^\beta v(t_{n+\sigma})$, where the basic way to approximate the multi-term derivative is shown.

\begin{lemma} {\rm(\cite{multiterm})}\label{trunerror2}
Suppose $v(t)\in{\cal C}^3[0,t_{n+1}]$, it holds that
$$\left|{_\lambda D}_t^\beta v(t_{n+\sigma}) -{\cal ^H{\widehat D}}_t^\beta v(t_{n+\sigma})\right|
\leq C\sum_{r=0}^m\lambda_r \tau^{3-\beta_r},$$
 where $\sigma\in \big[1-\frac{\beta_0}{2},1-\frac{\beta_m}{2}\big]$ is the root of equation
 $$F(\sigma)=\dsum_{r=0}^m\frac{\lambda_r \sigma^{1-\beta_r}}{\Gamma(3-\beta_r)}\Big[\sigma-(s-\dfrac{\beta_r}2)\Big]\tau^{2-\beta_r},$$
 generated by the method of Newton iteration.
 \end{lemma}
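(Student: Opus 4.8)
The plan is to run the ${\cal L}2$-$1_\sigma$ truncation analysis of \cite{AAA} term by term in $r$ and then recombine with the weights $\lambda_r$, as in \cite{multiterm}. Writing the exact operator as ${_\lambda D}_t^\beta v(t_{n+\sigma})=\sum_{r=0}^{m}\frac{\lambda_r}{\Gamma(1-\beta_r)}\int_0^{t_{n+\sigma}}v'(s)(t_{n+\sigma}-s)^{-\beta_r}\,ds$ and splitting each integral as $\sum_{k=1}^{n}\int_{t_{k-1}}^{t_k}+\int_{t_n}^{t_{n+\sigma}}$, exactly as in the definition of ${\cal ^H {\widehat D}}_t^\beta v(t_{n+\sigma})$, the truncation error becomes a sum over $r$ and over subintervals of terms $\int(v'-(\Pi v)')(s)(t_{n+\sigma}-s)^{-\beta_r}\,ds$, with $\Pi=\Pi_{2,k}$ on the history intervals $[t_{k-1},t_k]$ and $\Pi=\Pi_{1,n}$ on the local interval $[t_n,t_{n+\sigma}]$. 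By linearity it is enough to bound the history and local pieces for one fixed $r$ and then sum against $\lambda_r$; throughout one uses that $\sigma$ stays in $\big[1-\frac{\beta_0}{2},1-\frac{\beta_m}{2}\big]\subset\big(\frac12,1\big)$, so any constant depending on $\sigma$ is harmless.

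On a history interval $[t_{k-1},t_k]$ the decisive point is that $v-\Pi_{2,k}v$ vanishes at both endpoints, since these are interpolation nodes of the quadratic $\Pi_{2,k}$; integration by parts then kills the boundary terms and leaves $-\beta_r\int_{t_{k-1}}^{t_k}(v-\Pi_{2,k}v)(s)(t_{n+\sigma}-s)^{-\beta_r-1}\,ds$. Using $|v-\Pi_{2,k}v|\le C\tau^3\max_{[0,t_{n+1}]}|v'''|$ together with $\int_0^{t_n}(t_{n+\sigma}-s)^{-\beta_r-1}\,ds\le C(\sigma\tau)^{-\beta_r}$ and summing over $k$, the history contribution is bounded by $C\tau^{3-\beta_r}$. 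Thus the whole history part is already ${\cal O}\big(\sum_{r}\lambda_r\tau^{3-\beta_r}\big)$, with no tuning of $\sigma$ needed.

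The order drops only on the local interval $[t_n,t_{n+\sigma}]$, where $\Pi_{1,n}v$ is linear so that $(\Pi_{1,n}v)'\equiv\delta_t v^{n+\frac12}$ is constant. Expanding about $t_{n+\sigma}$ gives $v'(s)-\delta_t v^{n+\frac12}=v''(t_{n+\sigma})\big[(s-t_{n+\sigma})-\frac12(1-2\sigma)\tau\big]+{\cal O}(\tau^2)$, and the ${\cal O}(\tau^2)$ remainder contributes only ${\cal O}(\tau^{3-\beta_r})$ after integration against the kernel. The bracketed leading part, integrated against $(t_{n+\sigma}-s)^{-\beta_r}$ over $[t_n,t_{n+\sigma}]$ and multiplied by $\frac{\lambda_r}{\Gamma(1-\beta_r)}$, evaluates — after using $\Gamma(3-\beta_r)=(2-\beta_r)(1-\beta_r)\Gamma(1-\beta_r)$ — to precisely $v''(t_{n+\sigma})\,\frac{\lambda_r\sigma^{1-\beta_r}}{\Gamma(3-\beta_r)}\big(\sigma-1+\frac{\beta_r}{2}\big)\tau^{2-\beta_r}$, which, apart from the point at which $v''$ is sampled, does not depend on $n$ because the local integral ranges only over distances $\le\sigma\tau$ from $t_{n+\sigma}$. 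Summing over $r$, the only non-negligible part of the local error is $v''(t_{n+\sigma})\,F(\sigma)$ with $F$ the combination displayed in the statement (the factor there being $\sigma-(1-\frac{\beta_r}{2})$), so choosing $\sigma$ to annihilate $F$ removes it and leaves ${\cal O}\big(\sum_{r}\lambda_r\tau^{3-\beta_r}\big)$, which together with the history bound proves the claim. That such a $\sigma$ exists in $\big[1-\frac{\beta_0}{2},1-\frac{\beta_m}{2}\big]$ follows from the intermediate value theorem: since $\beta_m<\cdots<\beta_0$, at $\sigma=1-\frac{\beta_0}{2}$ each factor $\sigma-1+\frac{\beta_r}{2}=\frac12(\beta_r-\beta_0)\le0$ so $F\le0$, while at $\sigma=1-\frac{\beta_m}{2}$ each such factor is $\ge0$ so $F\ge0$; a short computation giving $F'>0$ on the interval yields uniqueness and justifies the Newton iteration used to compute it.

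The step I expect to be the genuine obstacle is the bookkeeping inside the local term: one has to verify that, beyond the explicit $v''(t_{n+\sigma})F(\sigma)$ contribution, every remaining piece — the ${\cal O}(\tau^2)$ Taylor tail, the matching between the last history interval and $[t_n,t_{n+\sigma}]$, and the higher moments of the interpolation error against the singular kernel — is genuinely ${\cal O}(\tau^{3-\beta_r})$ uniformly in $n$, so that the single scalar condition $F(\sigma)=0$ really recovers the full order. A secondary point to keep track of is that $\sigma$, and hence the coefficients $a_l^{(\beta)},b_l^{(\beta)},g_k^{(n+1,\beta)}$, depends on $\tau$; because $\sigma$ remains in the fixed compact subinterval of $\big(\frac12,1\big)$ this is harmless for the present estimate, though it is exactly what would complicate a nonuniform-grid version, as the authors note.
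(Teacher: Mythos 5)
Your argument is correct and is essentially the proof of this result given in \cite{multiterm} (building on \cite{AAA}): the paper itself states the lemma only by citation, and your reconstruction --- integration by parts on the history intervals using that $v-\Pi_{2,k}v$ vanishes at $t_{k-1},t_k$, Taylor expansion on the local interval, and the intermediate-value/monotonicity argument placing the root of $F$ in $\big[1-\frac{\beta_0}{2},1-\frac{\beta_m}{2}\big]$ --- matches that source. Your identification of the factor in $F(\sigma)$ as $\sigma-(1-\frac{\beta_r}{2})$ (the ``$s$'' in the displayed statement being a typo for $1$) and your computation of the local leading term via $\Gamma(3-\beta_r)=(2-\beta_r)(1-\beta_r)\Gamma(1-\beta_r)$ are both correct.
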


To construct fast numerical scheme, we introduce a ${\cal FL}2$-$1_\sigma$ formula,
proposed in \cite{FL21}, which is based on the ${\cal L}2$-$1_\sigma$ formula  and a SOE approximation to the kernel function $t^{-\beta}$ in the Caputo derivative. The SOE approximation reads as:
\begin{lemma}\label{soe}{\rm(\cite{FL21})}
For any $\beta\in(0,1)$, tolerance error $\epsilon$, cut-off time step size ${\widehat \tau}$ and final time $T$,
there are some positive integer $N^{(\beta)}$, positive points $s_j^{(\beta)}$ and corresponding positive weights $\omega_j^{(\beta)}, ~(j=1,2,\ldots,N^{(\beta)})$ satisfying
$$\left| t^{-\beta}-\sum_{j=1}^{N^{(\beta)}}\omega_j^{(\beta)} e^{-s_j^{(\beta)} t} \right|\leq \epsilon, \quad \forall t\in[{\widehat \tau},T] ,$$
and the number of exponentials needed is of the order
$$N^{(\beta)}={\cal O}\left(\log \frac1{\epsilon}\left(\log\log \frac1{\epsilon}+\log \frac{T}{\widehat \tau} \right) +\log \frac1{\widehat \tau}\left( \log\log \frac1{\epsilon}+\log \frac1{\widehat \tau} \right)\right).$$
\end{lemma}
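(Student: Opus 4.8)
The plan is to build the nodes $s_j^{(\beta)}$ and weights $\omega_j^{(\beta)}$ explicitly by discretizing the Laplace-type integral representation
$$t^{-\beta}=\frac1{\Gamma(\beta)}\int_0^\infty s^{\beta-1}e^{-st}\,ds,\qquad \beta\in(0,1);$$
any quadrature rule with positive nodes and weights applied to $s\mapsto s^{\beta-1}e^{-st}$ then produces an exponential sum of exactly the required form, and Gauss-type rules automatically have positive weights, which settles the positivity assertion. The whole task thus reduces to bounding the quadrature error uniformly for $t\in[\widehat\tau,T]$ while keeping the node count as small as the stated estimate.

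First I would truncate the $s$-integral to a window $[\delta_1,\delta_2]$. On the head $(0,\delta_1)$ one has $\int_0^{\delta_1}s^{\beta-1}e^{-st}\,ds\le\delta_1^{\beta}/\Gamma(\beta+1)$ for every $t>0$, so either $\delta_1\asymp\epsilon^{1/\beta}$ kills this piece outright, or else one retains $(0,\delta_1)$ and evaluates it with a single Gauss--Jacobi rule that absorbs the weight $s^{\beta-1}$, the remaining factor $e^{-st}$ being entire in $s$ with derivatives of size $(\delta_1 t)^k$ there — this costs ${\cal O}(\log\frac1\epsilon)$ nodes. On the tail $(\delta_2,\infty)$ one uses $t\ge\widehat\tau$ to get $\int_{\delta_2}^\infty s^{\beta-1}e^{-st}\,ds\le\widehat\tau^{-\beta}\Gamma(\beta,\delta_2\widehat\tau)$, and since $\Gamma(\beta,y)\lesssim y^{\beta-1}e^{-y}$ for large $y$, the choice $\delta_2\widehat\tau\asymp\log\frac1\epsilon+\log\frac1{\widehat\tau}+\log\log\frac1\epsilon$, i.e. $\delta_2={\cal O}\big(\widehat\tau^{-1}(\log\frac1\epsilon+\log\frac1{\widehat\tau})\big)$, makes the tail $\le\epsilon$. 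No quadrature nodes are spent on the tails.

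On the central window I would split $[\delta_1,\delta_2]$ dyadically into panels $[2^{j},2^{j+1}]$ and, after rescaling $s=2^{j}w$ with $w\in[1,2]$, apply a fixed Gauss--Legendre rule to $w\mapsto w^{\beta-1}e^{-2^{j}wt}$. This integrand extends analytically to a Bernstein ellipse around $[1,2]$, so a $K$-point Gauss--Legendre rule has error ${\cal O}(M_\rho\,\rho^{-2K})$ with $M_\rho$ depending on $2^{j}t$ only through a factor $e^{{\cal O}(2^{j}t)}$; tuning $\rho$ and $K$ per panel gives a per-panel node count of order $\log\frac1\epsilon$ in the bulk and of order $\log\frac1\epsilon+\log\frac{T}{\widehat\tau}$ on the panels near $\delta_2$, where $2^{j}t$ can reach $\delta_2 T$. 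Summing over the ${\cal O}(\log\frac{T}{\widehat\tau}+\log\frac1{\widehat\tau}+\log\log\frac1\epsilon)$ panels and adding the head contribution reproduces the stated $N^{(\beta)}$.

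The routine parts aside, the real obstacle is the bookkeeping in the last step: one must simultaneously balance the truncation thresholds $\delta_1,\delta_2$ (which control the number of dyadic panels) against the per-panel node counts, and verify the Gauss error bounds \emph{uniformly} in $t\in[\widehat\tau,T]$ — especially controlling $M_\rho$ on the panels where $2^{j}t$ is large. It is precisely this balancing that generates the mixed-logarithm shape of $N^{(\beta)}$, and it is here that analyticity of $e^{-st}$, rather than mere smoothness, is indispensable; the full details are carried out in \cite{FL21}, building on \cite{FL1}.
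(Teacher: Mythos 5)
You should first note that the paper does not prove this lemma at all: it is quoted verbatim from \cite{FL21} (which in turn builds on \cite{FL1} and the Beylkin--Monz\'on sum-of-exponentials machinery), so there is no in-paper argument to compare against. Your sketch reconstructs exactly the strategy of that source --- the Gamma-integral representation $t^{-\beta}=\frac1{\Gamma(\beta)}\int_0^\infty s^{\beta-1}e^{-st}\,ds$, truncation of the tail using $t\ge\widehat\tau$, a Gauss--Jacobi rule absorbing the $s^{\beta-1}$ singularity near the origin, and dyadic panels with Gauss--Legendre quadrature in between, with positivity of nodes and weights coming for free from the Gaussian rules. Structurally this is the right proof, and you are candid that the quantitative balancing is deferred to the reference.

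Two points deserve flagging. First, of your ``either/or'' for the head, only the Gauss--Jacobi branch is viable: if you instead discard $(0,\delta_1)$ with $\delta_1\asymp\epsilon^{1/\beta}$ and then cover $[\delta_1,1/T]$ by dyadic Gauss--Legendre panels, you acquire $\Theta\big(\tfrac1\beta\log\tfrac1\epsilon\big)$ additional panels, each costing ${\cal O}(\log\tfrac1\epsilon)$ nodes, hence a $\tfrac1\beta\big(\log\tfrac1\epsilon\big)^2$ contribution that is not dominated by the stated bound (which only contains $\log\tfrac1\epsilon\log\log\tfrac1\epsilon$ and $\log\tfrac1\epsilon\log\tfrac{T}{\widehat\tau}$). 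Second, your per-panel error bound ${\cal O}(M_\rho\,\rho^{-2K})$ with $M_\rho$ containing a factor $e^{{\cal O}(2^jt)}$ is, as written, useless on the far-field panels where $2^jt$ can be as large as $\delta_2T\asymp\tfrac{T}{\widehat\tau}\log\tfrac1\epsilon$; the actual argument must measure the quadrature error \emph{relative} to the size $\asymp 2^{j\beta}e^{-2^jt}$ of the panel integral itself, choosing $\rho-1$ inversely proportional to $2^j\widehat\tau$ so that the exponential growth on the Bernstein ellipse is cancelled by the decay of the integrand. You correctly identify this uniformity in $t\in[\widehat\tau,T]$ as the crux, but it is precisely the step your sketch does not carry out, so the proposal should be read as a correct outline of the cited proof rather than a self-contained argument.
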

Taking ${\widehat \tau}\le10^{-1}$, then we have $N^{(\beta)}={\cal O}\Big(\log\dfrac1{\widehat \tau}\log\dfrac T{\widehat \tau}+\log\dfrac1{\widehat \tau}\log\dfrac 1{\widehat \tau}\Big).$ Then let ${\widehat \tau}=\sigma\tau$ and $\tau<\min\{\sigma,\max\{\sigma^2,T\sigma^2\}\}$, we have
$N^{(\beta)}={\cal O}\Big(\log\dfrac1\tau\log\dfrac T\tau+\log\dfrac1\tau\log\dfrac1\tau\Big).$

For a single-term Caputo derivative of order $\beta\in(0,1)$ at time $t_{n+\sigma}$: ${_{0}^C}D_t^{\beta}v(t_{n+\sigma})$, the ${\cal FL}2$-$1_\sigma$ type formula here is estimating ${(t_{n+\sigma}-s)^{-\beta}}$ $(s\in(0,t_n))$ by Lemma \ref{soe} and $v'(s)$ $(s\in(t_n,t_{n+\sigma}))$ by linear polynomial, and then estimating the history part of the integral by using a recursive relation and quadratic interpolation, that is
\begin{align}\notag
 {_{0}^C}D_t^{\beta}v(t_{n+\sigma})&\approx
 \int_0^{t_n}v'(s)\sum\limits_{j=1}^{N^{(\beta)}}\widehat\omega_j^{(\beta)}e^{-s_j^{(\beta)}(t_{n+\sigma}-s)}ds
 +\dfrac1{\Gamma(1-\beta)}\cdot\dfrac{v^{n+1}-v^n}{\tau}
 \int_{t_n}^{t_{n+\sigma}}\frac1{(t_{n+\sigma}-s)^{\beta}}ds\\
 &\approx\sum\limits_{j=1}^{N^{(\beta)}}\widehat\omega_jV_j^{(n,\beta)}+\mu^{(\beta)} a_0^{(\beta)}(v^{n+1}-v^n)
 \triangleq{^{\cal FH}}{\cal D}_t^\beta v^{n+\sigma},\label{Fast-dv}
 \end{align}
where $${\widehat \omega}_j^{(\beta)}=\frac{\omega_j^{(\beta)}}{\Gamma(1-\beta)},
 ~~V_j^{(n,\beta)}=e^{-s_j^{(\beta)}\tau}V_j^{(n-1,\beta)}+A_j^{(\beta)}(v^{n}-v^{n-1})+B_j^{(\beta)}(v^{n+1}-v^n),$$
 with
 $A_j^{(\beta)}=\int_0^1(\frac32-s)e^{-s_j^{(\beta)}\tau(\sigma+1-s)}ds$,
 $B_j^{(\beta)}=\int_0^1(s-\frac12)e^{-s_j^{(\beta)}\tau(\sigma+1-s)}ds$, and
 $V_j^0=0$.

In fact, the ${\cal FL}2$-$1_\sigma$ type discretization can be rewritten as (see \cite{FL21})
 \begin{align*}
 &{^{\cal FH}}{\cal D}_t^\beta v^{n+\sigma}
 =\sum_{k=0}^n {^{\cal F}g_k^{(n+1,\beta)}}(v^{k+1}-v^k) ,
\end{align*}
where $^{\cal F}g_0^{(1,\beta)}=\mu^{(\beta)} a_0^{(\beta)}$ and 
\begin{equation}\label{fast-coefficient}
^{\cal F}g_k^{(n+1,\beta)}=\left\{\begin{array}{ll}
\dsum_{j=1}^{N^{(\beta)}}{\widehat \omega_j^{(\beta)}}e^{-(n-1)s_j^{(\beta)}\tau}A_j^{(\beta)}, &k=0;\\
\dsum_{j=1}^{N^{(\beta)}}{\widehat \omega_j^{(\beta)}} \Big[e^{-(n-k-1)s_j^{(\beta)}\tau}A_j^{(\beta)}+e^{-(n-k)s_j^{(\beta)}\tau}B_j^{(\beta)}\Big],~~&1\leq k\leq n-1;\\
\dsum_{j=1}^{N^{(\beta)}}{\widehat \omega_j^{(\beta)}}B_j^{(\beta)}+\mu^{(\beta)} a_0^{(\beta)}, &k=n;
\end{array}\right.\quad n\geq 1.
\end{equation}
%
 Denote ${^{\cal FH}}{\cal \widehat D}_t^\beta v^{n+\sigma}=\dsum_{r=0}^m\lambda_r{^{\cal FH}}{\cal D}_t^{\beta_r} v^{n+\sigma}$ and
  \begin{align}\label{gF}
 {^{\cal F} {\widehat g}_k^{(n+1)}}=\sum_{r=0}^m\lambda_r{^{\cal F}g_k^{(n+1,\beta_r)}},
 \end{align}
 then the multi-term ${\cal FL}2$-$1_\sigma$ type discritization can be rewritten as
 \begin{align}\label{for-v1}
 &{^{\cal FH}}{\cal {\widehat D}}_t^\beta v^{n+\sigma}
 =\sum_{k=0}^n {^{\cal F} {\widehat g}_k^{n+1}}(v^{k+1}-v^k) , \quad 0\leq n \leq N-1.
\end{align}
We have the following lemma which presents the error of the above fast approximation to multi-term Caputo derivative.
 \begin{lemma}\label{appro-Dv}
 For $v(t)\in{\cal C}^3[0,t_n]$, it holds that
 \begin{align*}
 &{_\lambda D}_t^\beta v(t_{n+\sigma}) ={^{\cal FH}{\cal {\widehat D}}}_t^\beta v^{n+\sigma}+ {\cal O}(\tau^{3-\beta_0}+\epsilon),~1\leq n\leq N,\\
  &{_\lambda D}_t^\beta v(t_{\sigma}) ={^{\cal FH}{\cal {\widehat D}}}_t^\beta v^{\sigma}+ {\cal O}(\tau^{3-\beta_0}).
  \end{align*}
 \end{lemma}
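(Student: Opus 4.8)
The plan is to obtain both estimates from a single triangle--inequality split that separates the ${\cal L}2$-$1_\sigma$ quadrature error already quantified in Lemma \ref{trunerror2} from the error caused by replacing the Caputo kernels by their sum--of--exponentials surrogates, which Lemma \ref{soe} controls. Writing $v^k=v(t_k)$, I would start from
$$\left|{_\lambda D}_t^\beta v(t_{n+\sigma})-{^{\cal FH}{\cal {\widehat D}}}_t^\beta v^{n+\sigma}\right|\le\left|{_\lambda D}_t^\beta v(t_{n+\sigma})-{\cal ^H{\widehat D}}_t^\beta v(t_{n+\sigma})\right|+\left|{\cal ^H{\widehat D}}_t^\beta v(t_{n+\sigma})-{^{\cal FH}{\cal {\widehat D}}}_t^\beta v^{n+\sigma}\right|$$
and estimate the two terms in turn.

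For the first term, Lemma \ref{trunerror2} gives the bound $C\sum_{r=0}^m\lambda_r\tau^{3-\beta_r}$; since $0<\beta_m<\cdots<\beta_0<1$ we have $\tau^{3-\beta_r}\le\tau^{3-\beta_0}$ for every $r$ whenever $\tau<1$, and $\sum_{r=0}^m\lambda_r\le C$, so this term is ${\cal O}(\tau^{3-\beta_0})$. For the second term I would use the fact, inherent in the derivation of the ${\cal FL}2$-$1_\sigma$ formula recalled around \eqref{Fast-dv}--\eqref{fast-coefficient}, that ${\cal ^H{\widehat D}}_t^\beta v(t_{n+\sigma})$ and ${^{\cal FH}{\cal {\widehat D}}}_t^\beta v^{n+\sigma}$ are built from the identical piecewise reconstruction of $v$ (linear on $[t_n,t_{n+\sigma}]$, quadratic $\Pi_{2,k}v$ on the earlier subintervals) and use the identical exact kernel on the local window $[t_n,t_{n+\sigma}]$; they differ only in that, on the history window $(0,t_n)$, the kernel $(t_{n+\sigma}-s)^{-\beta_r}$ is replaced by $\sum_{j}\omega_j^{(\beta_r)}e^{-s_j^{(\beta_r)}(t_{n+\sigma}-s)}$, recalling $\Gamma(1-\beta_r){\widehat\omega}_j^{(\beta_r)}=\omega_j^{(\beta_r)}$. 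Hence the second term equals
$$\left|\sum_{r=0}^m\frac{\lambda_r}{\Gamma(1-\beta_r)}\sum_{k=1}^n\int_{t_{k-1}}^{t_k}(\Pi_{2,k}v(s))'\Big[(t_{n+\sigma}-s)^{-\beta_r}-\sum_{j=1}^{N^{(\beta_r)}}\omega_j^{(\beta_r)}e^{-s_j^{(\beta_r)}(t_{n+\sigma}-s)}\Big]\,ds\right|,$$
and for $s\in(0,t_n)$ the kernel argument $t_{n+\sigma}-s$ lies in $(\widehat\tau,t_{n+\sigma}]\subset[\widehat\tau,T]$ (with $\widehat\tau=\sigma\tau$ as fixed before Lemma \ref{soe}, enlarging the final time in Lemma \ref{soe} by the harmless factor $(N+\sigma)/N$ when $n=N$), so Lemma \ref{soe} bounds the bracket by $\epsilon$ pointwise. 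Since $v\in{\cal C}^3$, the derivatives $(\Pi_{2,k}v)'$ are bounded uniformly in $k$ and $\tau$ by a constant depending only on $v$, whence $\int_0^{t_n}|(\Pi_{2,k}v(s))'|\,ds\le CT$; combined with $\Gamma(1-\beta_r)\ge c>0$ and $\sum_r\lambda_r\le C$ this makes the second term ${\cal O}(\epsilon)$, which completes the first assertion.

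For the second assertion, at $n=0$ (the point $t_\sigma$) the history window $(0,t_0)$ is empty, so the displayed difference vanishes identically: ${^{\cal FH}{\cal {\widehat D}}}_t^\beta v^{\sigma}$ coincides with ${\cal ^H{\widehat D}}_t^\beta v(t_\sigma)$ (both equal $\sum_{r=0}^m\lambda_r\mu^{(\beta_r)}a_0^{(\beta_r)}(v^1-v^0)$), and only the Lemma \ref{trunerror2} term survives, giving ${\cal O}(\tau^{3-\beta_0})$ with no $\epsilon$. The one genuinely delicate point is the bookkeeping in the middle step: one must check that the quadrature skeletons of the fast and non-fast multi-term discretizations agree term-by-term apart from the kernel-to-SOE substitution, and that every kernel argument appearing in the history sum stays inside the SOE validity interval $[\widehat\tau,T]$. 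Both facts are already furnished by the construction of ${^{\cal FH}}{\cal D}_t^\beta$ recalled in \eqref{Fast-dv}--\eqref{fast-coefficient}, so beyond this there is nothing left but the routine estimate of a smooth function's interpolation-derivative integral.
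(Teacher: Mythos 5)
Your proposal is correct and follows essentially the same route as the paper: both split the error into the ${\cal L}2$-$1_\sigma$ quadrature error controlled by Lemma \ref{trunerror2} and the kernel-to-SOE substitution error on the history window $(0,t_n)$ controlled by Lemma \ref{soe}, and both observe that the history window is empty at $n=0$ so the second estimate carries no $\epsilon$. Your extra remarks on the kernel argument staying in $[\widehat\tau,T]$ and on the term-by-term agreement of the two quadrature skeletons are sound bookkeeping that the paper leaves implicit.
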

 \begin{proof}
 For $1\leq n\leq N$, it follows from Lemmas \ref{trunerror2} and \ref{soe} that
 \begin{align*}
 {_\lambda D}_t^\beta v(t_{n+\sigma})
 =& {^{\cal H}}{\cal {\widehat D}}_t^\beta v^{n+\sigma}+{\cal O}(\tau^{3-\beta_0})\\
 =& {^{\cal FH}}{\cal {\widehat D}}_t^\beta v^{n+\sigma}+\sum_{r=0}^m\frac{\lambda_r}{\Gamma(1-\beta_r)}
 \Bigg\{\sum_{k=1}^{n-1}\int_{t_{k-1}}^{t_k}(\Pi_{2,k}v(s))'\bigg[(t_{n-1+\sigma}-s)^{-\beta_r}\\
 & -\sum\limits_{j=1}^{N^{(\beta_r)}}\omega_j^{(\beta_r)}e^{-s_j^{(\beta_r)}(t_{n+\sigma}-s)} \bigg]ds\Bigg\}
  +{\cal O}(\tau^{3-\beta_0})\\
  =&{^{\cal FH}}{\cal {\widehat D}}_t^\beta v^{n+\sigma}+\sum_{r=0}^m\frac{\lambda_r t_n{\cal O}(\epsilon_r)}{\Gamma(1-\beta_r)}\max_{0\leq t\leq t_n}|v'(t)|
  +{\cal O}(\tau^{3-\beta_0})\\
   =&{^{\cal FH}}{\cal {\widehat D}}_t^\beta v^{n+\sigma}
  +{\cal O}(\tau^{3-\beta_0}+\epsilon),
 \end{align*}
where $\epsilon_r$ is the tolerance related to the SOE approximation of the kernel $(t_{n-1+\sigma}-s)^{-\beta_r}$, and $\epsilon=\max\limits_{0\leq r\leq m}\{\epsilon_r\}$.
 As ${^{\cal FH}{\cal {\widehat D}}}_t^\beta v^{\sigma}$ $=$ ${^{\cal H}{\cal {\widehat D}}}_t^\beta v^{\sigma}$, the second conclusion can be directly obtained from Lemma \ref{trunerror2}.
 \end{proof}

 \subsection{Fast approximation to multi-term derivative}
 \subsubsection{A weighted method to approximate ${_\lambda D}_t^\alpha u(t_{n})$}
Now we go to study a second-order approximation to the multi-term derivative ${_\lambda D}_t^\alpha u(t_{n})$ based on the method of order reduction and a weighted approach.
The following lemma provides an important weighted approach for discretizing ${_\lambda D}_t^\alpha u(t_{n})~(1\leq n\leq N-1)$ that leads to the linearized approximation to our considered nonlinear problem.
\begin{lemma}{\rm (\cite{LyuVongNA2017})}\label{weigted-app}
For any $f(t)\in{\cal C}^2[t_{n-1+\sigma},t_{n+\sigma}]$, it holds that
$$f(t_n)=(1-\sigma)f(t_{n+\sigma})+\sigma f(t_{n-1+\sigma})+{\cal O}(\tau^2).$$
\end{lemma}
Before applying Lemma \ref{weigted-app}, we first observe that \eqref{fast-coefficient} and \eqref{gF}
can yield ($n\geq1$)
\begin{align}\label{proper-g1}
^{\cal F}{\widehat g}_k^{(n+1)}=& ^{\cal F}{\widehat g}_{k-1}^{(n)}, \quad 2\leq k\leq n,\\\label{proper-g2}
^{\cal F}{\widehat g}_1^{(n+1)}=& ^{\cal F}{\widehat g}_0^{(n)}+b_n,
\end{align}
where $b_n=\dsum_{r=0}^m\lambda_r\dsum_{j=1}^{N^{(\beta_r)}}{\widehat \omega_j^{(\beta_r)}}e^{-(n-1)s_j^{(\beta_r)}\tau}B_j^{(\beta_r)}$.
Then, for $n\geq 1$,

\begin{align*}
{^{\cal FH}}{\cal {\widehat D}}_t^\beta v^{n-1+\sigma}=\sum_{k=0}^{n-1} {^{\cal F}{\widehat g}_k^{(n)}}(v^{k+1}-v^k)
&= \sum_{k=1}^{n} {^{\cal F}{\widehat g}_{k-1}^{(n)}}(v^{k}-v^{k-1})\\
&= \sum_{k=2}^{n} {^{\cal F}{\widehat g}_{k}^{(n+1)}}(v^{k}-v^{k-1})+{^{\cal F}{\widehat g}_0^{(n)}}(v^1-v^0)\\
&= \sum_{k=2}^{n} {^{\cal F}{\widehat g}_{k}^{(n+1)}}(v^{k}-v^{k-1})+{^{\cal F}{\widehat g}_1^{(n+1)}}(v^1-v^0)-b_n (v^1-v^0)\\
&= \sum_{k=1}^{n} {^{\cal F}{\widehat g}_{k}^{(n+1)}}(v^{k}-v^{k-1})-b_n (v^1-v^0).
\end{align*}
For simplicity of representation, we take
\begin{align}\label{vk+1-sigma}
&v^{k+1-\sigma}=(1-\sigma)v^{k+1}+\sigma v^k,~~ k\geq0,\\\label{bfg}
&{\bf g}_0^{(n+1)}={^{\cal F}{\widehat  g}_0^{(n+1)}}-\frac12 b_n,\qquad {\bf g}_k^{(n+1)}={^{\cal F}{\widehat  g}_k^{(n+1)}},\quad k\geq 1.
\end{align}
Invoking the weights investigated in Lemma \ref{weigted-app}, we have
\begin{align*}
&(1-\sigma) {^{\cal FH}}{\cal {\widehat D}}_t^\beta v^{n+\sigma}+\sigma {^{\cal FH}}{\cal {\widehat D}}_t^\beta v^{n-1+\sigma}\\
=& (1-\sigma) \sum_{k=0}^n  {^{\cal F}{\widehat g}_k^{(n+1)}}(v^{k+1}-v^k) + {\sigma}\sum_{k=1}^{n}  {^{\cal F}{\widehat g}_k^{(n+1)}}(v^{k}-v^{k-1})-\sigma b_n(v^1-v^0) \\
=&\sum_{k=1}^n  {\bf g}_k^{(n+1)}(v^{k+1-\sigma}-v^{k-\sigma}) +\left({^{\cal F}{\widehat g}_0^{(n+1)}}-\frac{\sigma}{1-\sigma} b_n\right)(v^{1-\sigma}-v^0).
\end{align*}
However, we find that the last coefficient ${^{\cal F}{\widehat g}_0^{(n+1)}}-\dfrac{\sigma}{1-\sigma} b_n$ is not always positive, which may cause difficulty in our analysis. In view of this, we regroup the last term
\begin{align*}
&\left[{^{\cal F}{\widehat g}_0^{(n+1)}}-\frac{\sigma}{1-\sigma} b_n\right](v^{1-\sigma}-v^0)\\
=&\left[{(1-\sigma)}{^{\cal F}{\widehat g}_0^{(n+1)}}-{\sigma} b_n\right](v^1-v^0)\\
=&\left[(1-\sigma)({^{\cal F}{\widehat g}_0^{(n+1)}}-b_n)-(2\sigma-1)b_n \right](v^1-v^0)\\
=&({^{\cal F}{\widehat g}_0^{(n+1)}}-\frac12 b_n)(v^{1-\sigma}-v^0)-(2\sigma-1)b_n (v^1-v^0)
-\frac12 b_n (v^{1-\sigma}-v^0)\\
=&{\bf g}_0^{(n+1)}(v^{1-\sigma}-{ v^{-\sigma}}),
\end{align*}
where
$${ v^{-\sigma}}\triangleq {\tilde b_n}(v^{1-\sigma}-v^0)+v^0 \quad \mbox{with} \quad {\tilde b_n}=\frac{(3\sigma-1)b_n}{2(1-\sigma){\bf g}_0^{(n+1)}}.$$
Thus
\begin{align}\label{weighted-discretization}
(1-\sigma) {^{\cal FH}}{\cal {\widehat D}}_t^\beta v^{n+\sigma}+\sigma {^{\cal FH}}{\cal {\widehat D}}_t^\beta v^{n-1+\sigma}
=  \sum_{k=0}^n  {\bf g}_k^{(n+1)}(v^{k+1-\sigma}-v^{k-\sigma}), \quad 1\leq n\leq N-1 .
\end{align}
Note that the function $v$ is still not discretized by $u$ in the above derivations, to obtain a fully discretization for  ${_\lambda D}_t^\alpha u(t_{n})~(1\leq n\leq N-1)$, we next first verify some necessary properties of the coefficients.

\begin{lemma}\label{coefficient-pro1}
The coefficients in \eqref{bfg} and $b_n$ satisfy $(n\geq 1)$
\begin{align*}
&(a)~Ct_{n+\sigma}^{1-\alpha_{i_1}}\leq
{\bf g}_0^{(n+1)} \leq Ct_{n-1+\sigma}^{1-\alpha_{i_2}};~~~~\quad (b)~{b}_n < 2{\bf g}_0^{(n+1)};  \\
&(c)~\tau\sum_{k=1}^n{\bf g}_k^{(n+1)}\leq Ct_{n-1}^{2-\alpha_0},~~n\geq2;
~~~~~ (d)~{\bf g}_1^{(n+1)}\leq C{\bf g}_0^{(n+1)};
\end{align*}
where $\alpha_{i_1},\alpha_{i_2}\in(\alpha_m,\alpha_0)$.
\end{lemma}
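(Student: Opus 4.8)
The plan is to establish each of the four estimates by tracing back to the definitions \eqref{fast-coefficient}, \eqref{gF} and \eqref{bfg}, and to the SOE error bound in Lemma \ref{soe}. The fundamental observation is that for each fixed $\beta = \beta_r$, the fast coefficients $^{\cal F}g_k^{(n+1,\beta)}$ are within ${\cal O}(\epsilon)$ of the exact ${\cal L}2$-$1_\sigma$ coefficients $g_k^{(n+1,\beta)}$, so any property one can prove for the $g_k^{(n+1,\beta)}$ (which are explicit in the $a_l^{(\beta)}$, $b_l^{(\beta)}$) transfers, up to harmless $\epsilon$-perturbations, to the fast ones; summing against the positive weights $\lambda_r$ and using $\sum_r \lambda_r \le C$ then produces the multi-term versions. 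Throughout I would use the elementary bounds $a_0^{(\beta)} = \sigma^{1-\beta} \asymp 1$, $0 < a_l^{(\beta)} \le (1-\beta)(l-1+\sigma)^{-\beta}$, $0 < b_l^{(\beta)} \le C l^{-\beta}$, together with $\mu^{(\beta)} = \tau^{-\beta}/\Gamma(2-\beta)$, and the fact that $\sigma$ lies in the fixed interval $[1-\beta_0/2, 1-\beta_m/2]$ so all $\beta$-dependent constants are uniform.

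For part (a), I would start from ${\bf g}_0^{(n+1)} = {^{\cal F}{\widehat g}_0^{(n+1)}} - \tfrac12 b_n = \sum_r \lambda_r \big({^{\cal F}g_0^{(n+1,\beta_r)}} - \tfrac12 \sum_j \widehat\omega_j^{(\beta_r)} e^{-(n-1)s_j^{(\beta_r)}\tau} B_j^{(\beta_r)}\big)$. Using $A_j - \tfrac12 B_j = \int_0^1 (\tfrac32 - s - \tfrac12 s + \tfrac14) e^{-s_j\tau(\sigma+1-s)}\,ds$—more cleanly, I would instead go through the exact ${\cal L}2$-$1_\sigma$ identity: $g_0^{(n+1,\beta)} - \tfrac12 b_n^{\text{(single)}}$-type combination equals $\mu^{(\beta)}(a_n^{(\beta)} - \tfrac12 b_n^{(\beta)} - \tfrac12 b_n^{(\beta)}+\dots)$, and then match against a Caputo-kernel integral to get two-sided bounds $c\,\mu^{(\beta)} a_n^{(\beta)} \le g_0^{(n+1,\beta)} - \text{(correction)} \le C\,\mu^{(\beta)} a_n^{(\beta)}$. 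Since $\mu^{(\beta)} a_n^{(\beta)} \asymp \tau^{-\beta}\big[(n+\sigma)^{1-\beta}-(n-1+\sigma)^{1-\beta}\big] \asymp (n+\sigma)^{-\beta}\tau^{-\beta}\cdot\tau \asymp t_{n+\sigma}^{-\beta}$ when $\alpha = \beta+1$, i.e. $t_{n+\sigma}^{1-\alpha}$, the single-term bound is $C t_{n+\sigma}^{1-\alpha} \le g_0^{(n+1,\beta)} \le C t_{n-1+\sigma}^{1-\alpha}$ (the lower/upper indices $\alpha_{i_1},\alpha_{i_2}$ arise because, after summing over $r$ with weights, the dominant term at small $t$ comes from the largest $\beta_r$ and at controlling the sum one bounds each $t^{-\beta_r}$ by $t^{-\beta_{i_1}}$ or $t^{-\beta_{i_2}}$ according to whether $t_{n+\sigma}\gtrless 1$). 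Adding back the ${\cal O}(\epsilon)$ SOE error does not affect the order. Part (b) is the easiest: $b_n = \sum_r \lambda_r \sum_j \widehat\omega_j^{(\beta_r)} e^{-(n-1)s_j\tau}B_j^{(\beta_r)}$, and one shows $b_n \le \sum_r \lambda_r \cdot 2\,{^{\cal F}g_0^{(n+1,\beta_r)}}$-ish, i.e. ${^{\cal F}{\widehat g}_0^{(n+1)}} - \tfrac12 b_n > \tfrac14 b_n > 0$ forces $b_n < 2{\bf g}_0^{(n+1)}$; concretely, $b_n \le {^{\cal F}{\widehat g}_0^{(n+1)}}$ should already follow from $B_j \le A_j$ (since $s-\tfrac12 \le \tfrac32 - s$ on $[0,1]$), giving $b_n \le {^{\cal F}{\widehat g}_0^{(n+1)}}$ hence $b_n \le \tfrac{2}{?}{\bf g}_0^{(n+1)}$ with a constant below $2$.

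For part (c), the sum $\tau\sum_{k=1}^n {\bf g}_k^{(n+1)} = \tau\sum_{k=1}^n {^{\cal F}{\widehat g}_k^{(n+1)}}$ telescopes favorably: $\sum_{k=1}^n {^{\cal F}g_k^{(n+1,\beta)}}$ is, by the recursion \eqref{proper-g1}–\eqref{proper-g2} or directly from \eqref{fast-coefficient}, equal to $\mu^{(\beta)} a_0^{(\beta)} + \sum_j \widehat\omega_j^{(\beta)}\big[$ a telescoping-in-$k$ quantity $\big]$, which I would compare with the exact ${\cal L}2$-$1_\sigma$ partial sum $\sum_{k=1}^n g_k^{(n+1,\beta)}$; the latter, by the standard identity, equals $\mu^{(\beta)}(n-1+\sigma)^{1-\beta} + \mu^{(\beta)} b_1^{(\beta)} - \mu^{(\beta)} b_n^{(\beta)}$-type expression bounded by $C\tau^{-\beta}(n-1)^{1-\beta} \asymp \tau^{-1} t_{n-1}^{1-\beta}$, so that $\tau\sum_{k=1}^n g_k^{(n+1,\beta)} \le C t_{n-1}^{1-\beta} = C t_{n-1}^{2-\alpha}$; summing over $r$ against $\lambda_r$ and dominating $t_{n-1}^{2-\beta_r}$ by $t_{n-1}^{2-\alpha_0}$ (valid since $2-\alpha_0$ is the smallest exponent, and for $n\ge 2$ one has $t_{n-1}\ge\tau$, handling both $t_{n-1}\lessgtr 1$ by using $\sum_r\lambda_r\le C$) gives the claim; the $\epsilon$-error contributes $\tau\sum_k {\cal O}(\epsilon) = {\cal O}(t_n\epsilon)$ which is lower order. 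Part (d) asks ${\bf g}_1^{(n+1)} = {^{\cal F}{\widehat g}_1^{(n+1)}} \le C{\bf g}_0^{(n+1)}$: by \eqref{proper-g2}, ${^{\cal F}{\widehat g}_1^{(n+1)}} = {^{\cal F}{\widehat g}_0^{(n)}} + b_n$, and by part (b) $b_n < 2{\bf g}_0^{(n+1)}$, so it suffices to bound ${^{\cal F}{\widehat g}_0^{(n)}} \le C{\bf g}_0^{(n+1)}$, i.e. to compare the $k=0$ coefficients at consecutive levels; from the monotone-decay structure $e^{-(n-2)s_j\tau}A_j$ versus $e^{-(n-1)s_j\tau}A_j$ the ratio is bounded by $e^{s_j\tau}$ which is ${\cal O}(1)$ for the relevant (small-$s_j$) modes, and for large $s_j$ both are exponentially negligible; equivalently one re-derives it from the exact coefficients where $a_{n-1}^{(\beta)}/a_n^{(\beta)}$ is bounded by a constant uniformly in $n$.

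The main obstacle I expect is part (c): making the telescoping in the fast coefficients $^{\cal F}g_k^{(n+1,\beta)}$ clean enough to extract the sharp $t_{n-1}^{2-\alpha_0}$ power rather than a cruder $t_n^{2-\alpha_0}$ or a bound with an extra $\log$ factor, and keeping the summed-over-$r$ constant uniform in $m$ using only $\sum_r \lambda_r \le C$. The safest route is to avoid manipulating the SOE sums directly and instead prove everything for the exact ${\cal L}2$-$1_\sigma$ coefficients $g_k^{(n+1,\beta)}$—where all quantities are explicit polynomials in $l+\sigma$ and the needed monotonicity/telescoping identities are already in \cite{AAA,multiterm}—then transfer to the fast versions via $|{^{\cal F}g_k^{(n+1,\beta)}} - g_k^{(n+1,\beta)}| \le C\epsilon$ (a consequence of Lemma \ref{soe} applied termwise, as in the proof of Lemma \ref{appro-Dv}), which only perturbs each estimate by an additive ${\cal O}(\epsilon)$ or ${\cal O}(t_n\epsilon)$ that is absorbed into the stated ${\cal O}$-bounds. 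A secondary subtlety is the appearance of the two distinct orders $\alpha_{i_1},\alpha_{i_2}\in(\alpha_m,\alpha_0)$ in (a): these are not a single fixed order but are chosen depending on whether $t_{n-1+\sigma}$ and $t_{n+\sigma}$ exceed $1$, so the proof of (a) should be split into the regimes $t_{n+\sigma}\le 1$ and $t_{n+\sigma}>1$ and in each regime pick the exponent that makes $\sum_r \lambda_r t^{1-\alpha_r}$ comparable to a single power.
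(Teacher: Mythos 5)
Your proposal is correct in substance, and for parts (b) and (d) it coincides with the paper's argument (respectively: $A_j^{(\beta_r)}-B_j^{(\beta_r)}=\int_0^1(2-2s)e^{-s_j^{(\beta_r)}\tau(\sigma+1-s)}ds>0$, hence $b_n<{^{\cal F}{\widehat g}_0^{(n+1)}}$ and so $b_n<2{\bf g}_0^{(n+1)}$; and the one-level shift ${^{\cal F}{\widehat g}_0^{(n)}}\le \max_{j,r}\{e^{s_j^{(\beta_r)}\tau}\}\,{^{\cal F}{\widehat g}_0^{(n+1)}}$ combined with (b)). Where you genuinely diverge is the organizing principle: the paper never passes through the exact ${\cal L}2$-$1_\sigma$ coefficients at all. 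For (a) it uses the integral mean value theorem to write $A_j^{(\beta_r)}-\tfrac12B_j^{(\beta_r)}=e^{-s_j^{(\beta_r)}\tau(\sigma+1-s_0)}$ (the weight $\tfrac74-\tfrac32 s$ has unit mass and is positive on $[0,1]$), and then resums the SOE via Lemma \ref{soe} to get ${\bf g}_0^{(n+1)}=\sum_r\lambda_r((n+\sigma-s_0)\tau)^{-\beta_r}+{\cal O}(\epsilon)$ directly; for (c) it bounds each ${^{\cal F}g_k^{(n+1,\beta_r)}}$ by $Ct_{n-k-1+\sigma}^{-\beta_r}$ using $A_j^{(\beta_r)}\le\tfrac32e^{-s_j^{(\beta_r)}\tau\sigma}$, $B_j^{(\beta_r)}\le\tfrac12e^{-s_j^{(\beta_r)}\tau\sigma}$ and compares the resulting sum with $\int_0^{n-1}x^{-\beta_r}dx$, rather than invoking the telescoping identity $\sum_{k=1}^n g_k^{(n+1,\beta)}=\mu^{(\beta)}[(n-1+\sigma)^{1-\beta}+b_n^{(\beta)}]$ for the exact coefficients. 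Your ``prove for $g_k^{(n+1,\beta)}$, transfer by $|{^{\cal F}g_k^{(n+1,\beta)}}-g_k^{(n+1,\beta)}|\le C\epsilon$'' route is viable (it is the mechanism behind the quoted Lemma \ref{lemmafast}), and it buys reusability of the known identities from \cite{AAA,multiterm}; the paper's route buys exact positivity statements (needed for the strict inequality in (b) and for the lower bound in (a)) without having to argue that an additive ${\cal O}(\epsilon)$ perturbation does not destroy a lower bound of size $t_{n+\sigma}^{-\beta_r}\ge cT^{-\beta_r}$ --- a point you should make explicit if you follow the transfer route, since it silently imposes a smallness condition on $\epsilon$ of exactly the kind Lemma \ref{multi-coeff-property} later states. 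One further small repair: your two-sided bound in (a) must be for ${\bf g}_0^{(n+1)}={^{\cal F}{\widehat g}_0^{(n+1)}}-\tfrac12 b_n$ and not for ${^{\cal F}{\widehat g}_0^{(n+1)}}$ alone, so you need $\tfrac12{^{\cal F}{\widehat g}_0^{(n+1)}}<{\bf g}_0^{(n+1)}<{^{\cal F}{\widehat g}_0^{(n+1)}}$ from (b) first (harmless, since (b) does not use (a), but the order of the parts must then be swapped).
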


\begin{proof}

$\bullet$ {\bf Estimation of $(a)$:} Since
$$A_j^{(\beta_r)}-\frac12 B_j^{(\beta_r)}=\int_0^1\left(\frac74-\frac32 s\right)e^{-s_j^{(\beta_r)}\tau(\sigma+1-s)} ds=e^{-s_j^{(\beta_r)}\tau(\sigma+1-s_0)},~~s_0\in(0,1),$$
then
\begin{align}\label{g0n11}
{\bf g}_0^{(n+1)}={^{\cal F} {{\widehat g}_0^{(n+1)}}}-\frac12 b_n=&\sum_{r=0}^m\lambda_r\sum_{j=1}^{N^{(\beta_r)}}{\widehat \omega_j^{(\beta_r)}}e^{-(n-1)s_j^{(\beta_r)}\tau}(A_j^{(\beta_r)}-\frac12 B_j^{(\beta_r)})\\\notag
=&\sum_{r=0}^m\lambda_r\sum_{j=1}^{N^{(\beta_r)}}{\widehat \omega_j^{(\beta_r)}}e^{-(n+\sigma-s_0)s_j^{(\beta_r)}\tau}\\\label{g0n12}
=&\sum_{r=0}^m\lambda_r((n+\sigma-s_0)\tau)^{-\beta_r}+{\cal O}(\epsilon).
\end{align}
Hence, there exist $\alpha_{i_0},\alpha_{i_1}\in(\alpha_m,\alpha_0)$ such that
$$Ct_{n+\sigma}^{1-\alpha_{i_0}}\leq {\bf g}_0^{(n+1)} \leq Ct_{n-1+\sigma}^{1-\alpha_{i_1}}.$$
$\bullet$ {\bf Estimation of $(b)$:} Note that
\begin{align*}
A_j^{(\beta_r)}-B_j^{(\beta_r)}=&\int_0^1\left(2-2s\right)e^{-s_j^{(\beta_r)}\tau(\sigma+1-s)} ds>0,
\end{align*}
that is $B_j^{(\beta_r)}<2\left(A_j^{(\beta_r)}-\frac12 B_j^{(\beta_r)}\right)$, then it follows from \eqref{g0n11} that
$$b_n=\sum_{r=0}^m\lambda_r\sum_{j=1}^{N^{(\beta_r)}}{\widehat \omega_j^{(\beta_r)}}e^{-(n-1)s_j^{(\beta_r)}\tau}B_j^{(\beta_r)}<2{\bf g}_0^{(n+1)}. $$
$\bullet$ {\bf Estimation of $(c)$:}
We first notice that
$$
A_j^{(\beta_r)}\leq \frac32 e^{-s_j^{(\beta_r)}\tau\sigma},~~B_j^{(\beta_r)}\leq \frac12 e^{-s_j^{(\beta_r)}\tau\sigma}.$$
Then, for $1\leq k\leq n-1$ we have
\begin{align*}
{^{\cal F}{g_k^{(n+1,\beta_r)}}} =&\sum_{j=1}^{N^{(\beta_r)}}{\widehat \omega_j^{(\beta_r)}}\left( e^{-(n-k-1)s_j^{(\beta_r)}\tau}A_j^{(\beta_r)}+
 e^{-(n-k)s_j^{(\beta_r)}\tau}B_j^{(\beta_r)}\right) \\
\leq& \sum_{j=1}^{N^{(\beta_r)}}{\widehat \omega_j^{(\beta_r)}}\left( \frac32e^{-(n-k-1+\sigma)s_j^{(\beta_r)}\tau}+
 \frac12e^{-(n-k+\sigma)s_j^{(\beta_r)}\tau}\right) \\
\leq& 2t_{n-k-1+\sigma}^{-\beta_r}+{\cal O}(\epsilon)
\leq   Ct_{n-k-1+\sigma}^{-\beta_r},
 \end{align*}
and for $k=n$
$${^{\cal F}{g_n^{(n+1,\beta_r)}}} =\sum_{j=1}^{N^{(\beta_r)}}{\widehat \omega_j^{(\beta_r)}}B_j^{(\beta_r)}+\lambda a_0^{(\beta_r)}\leq
\frac12 \sum_{j=1}^{N^{(\beta_r)}}{\widehat \omega_j^{(\beta_r)}}e^{-s_j^{(\beta_r)}\tau\sigma}+ C\tau^{-\beta_r}\leq Ct_{\sigma}^{-\beta_r}.$$
Therefore
\begin{align*}
\tau\sum_{k=1}^n{^{\cal F}{g_k^{(n+1,\beta_r)}}} \leq  C\tau\left(\sum_{k=1}^{n-1} t_{n-k-1+\sigma}^{-\beta_r}
+t_{\sigma}^{-\beta_r}\right)
\leq C\tau^{1-\beta_r}\sum_{k=2}^n(n-k+\sigma)^{-\beta_r}\\
\leq C\tau^{1-\beta_r}\int_0^{n-1}x^{-\beta_r}dx
\leq C\tau^{1-\beta_r}(n-1)^{1-\beta_r}
=Ct_{n-1}^{2-\alpha_r}.
\end{align*}
Thus $(c)$ can be verified by the fact
\begin{align}\label{coe-relation}
{\bf g}_k^{(n+1)}={^{\cal F}{{\widehat g}_k^{n+1}}}=\sum_{r=0}^m\lambda_r {^{\cal F}{g_k^{(n+1,\beta_r)}}}, \quad 1\leq k\leq n.
\end{align}
$\bullet$ {\bf Estimation of $(d)$:}
As $A_j^{(\beta_r)}>0$, for  $n\geq 2$ we have
\begin{align}\notag
^{\cal F}{\widehat g}_0^{(n)}=\sum_{r=0}^m\lambda_r \sum_{j=1}^{N^{(\beta_r)}}{\widehat \omega_j^{(\beta_r)}} e^{-(n-2)s_j^{(\beta_r)}\tau}A_j^{(\beta_r)}
\leq&\max_{j,r}\{e^{s_j^{(\beta_r)}\tau}\} \sum_{r=0}^m\lambda_r \sum_{j=1}^{N^{(\beta_r)}}{\widehat \omega_j^{(\beta_r)}} e^{-(n-1)s_j^{(\beta_r)}\tau}A_j^{(\beta_r)}\\\label{eqq}
\leq& C~ {^{\cal F}}{\widehat g}_0^{(n+1)},
\end{align}
for $n=1$, noticing \eqref{g0n12}, we have
\begin{align}\label{eqq01}
^{\cal F}{\widehat g}_0^{(1)}=\sum_{r=0}^m\lambda_r \mu^{(\beta_r)}a_0^{(\beta_r)}
=\sigma \sum_{r=0}^m \frac{\lambda_r}{\Gamma(2-\beta_r)}(\sigma\tau)^{-\beta_r}
\leq C~{^{\cal F}}{\widehat g}_0^{(2)}.
\end{align}
Thus by $(b)$ and \eqref{eqq}--\eqref{eqq01},
we get the verification of $(d)$:
\begin{align*}
{\bf g}_1^{(n+1)}=&^{\cal F}{\widehat g}_0^{(n)}+b_n
\leq C~{^{\cal F}{{\widehat g}_0^{(n+1)}}}+b_n
\leq C {\bf g}_0^{(n+1)}.
\end{align*}
\end{proof}
Taking
\begin{align}\label{hatvu1}
&{\widehat v}^{k+1-\sigma}=(2-2\sigma)\delta_t u^{k+\frac12}+(2\sigma-1)\delta_{\widehat t}u^{k},\quad 0\leq k\leq n,\\\label{hatvu2}
&{\widehat v}^{1-\sigma}=(2-2\sigma)\delta_tu^{\frac12}+(2\sigma-1)u_t^0,\\\label{hatvu3}
&{\widehat v}^{-\sigma}={\tilde b}_n({\widehat v}^{1-\sigma}-u_t^0)+u_t^0.
\end{align}
From the subsection 2.2 in \cite{LyuVongNA2017}, we know that
\begin{align}\nonumber
&v^{k+1-\sigma}={\widehat v}^{k+1-\sigma}-(2-2\sigma)R_t^{k+\frac12}-(2\sigma-1)R_{\widehat t}^{k}+(2-2\sigma)R_{v}^{k+\frac12},\\\label{vktheta}
&v^{1-\sigma}={\widehat v}^{1-\sigma}-(2-2\sigma)R_t^{\frac12}+(2-2\sigma)R_{v}^{\frac12},
\end{align}
where the truncation errors satisfy
$$R_{v}^{k+\frac12}={\cal O}(\tau^2),\quad R_t^{k+\frac12}={\cal O}(\tau^2), \quad
R_{\widehat t}^{k+1}={\cal O}(\tau^2)\quad \mbox{for}\quad 0\leq k \leq n,$$
provided $u(t)\in {\cal C}^3[0,T]$. Moreover,
$$R_{v}^{k+\frac12}-R_{v}^{k-\frac12}={\cal O}(\tau^3),~ R_t^{k+\frac12}-R_t^{k-\frac12}=
{\cal O}(\tau^3),~ R_{\widehat t}^{k+1}-R_{\widehat t}^{k}={\cal O}(\tau^3) \quad \mbox{for}\quad 1\leq k\leq n,$$
provided $u(t)\in {\cal C}^4[0,T]$.

We are now ready to show a fully discretization to ${_\lambda D}_t^\alpha u(t_{n})~(1\leq n\leq N-1)$ and its truncation error.
\begin{lemma}\label{main-lemma}
Suppose $u(t)\in{\cal C}^4[0,T]$. Denote
\begin{align}\label{full-appro}
{^{\cal FH}}{\cal {\widehat D}}_t^\alpha u^{n+1}=\sum_{k=0}^n  {\bf g}_k^{(n+1)}({\widehat v}^{k+1-\sigma}-{\widehat v}^{k-\sigma}), \quad 1\leq n\leq N-1 .
\end{align}
Then
$$\left|{_\lambda D}_t^\alpha u(t_{n})-{^{\cal FH}}{\cal {\widehat D}}_t^\alpha u^{n+1} \right|\leq
C\left({\bf g}_0^{(n+1)}\tau^2+\epsilon \right), \quad 1\leq n\leq N-1 .$$
\end{lemma}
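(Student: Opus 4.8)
The plan is to bound the target error by splitting it into three contributions: the weighted-approximation error from Lemma \ref{weigted-app} (passing from ${_\lambda D}_t^\alpha u(t_n)$ to a $\sigma$-weighted combination of fractional derivatives of $v=u_t$ at $t_{n+\sigma}$ and $t_{n-1+\sigma}$), the fast-discretization error from Lemma \ref{appro-Dv} (passing from those exact fractional derivatives to the fast formulas ${^{\cal FH}}{\cal {\widehat D}}_t^\beta v^{n+\sigma}$ and ${^{\cal FH}}{\cal {\widehat D}}_t^\beta v^{n-1+\sigma}$), and finally the order-reduction substitution error (replacing the exact nodal values $v^{k\pm\sigma}$ by the computable quantities ${\widehat v}^{k+1-\sigma}$ built from $u$ via \eqref{hatvu1}--\eqref{hatvu3}). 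The first two steps are essentially bookkeeping on top of results already established; the real work is controlling the third step, where the relation \eqref{weighted-discretization} must be combined with \eqref{vktheta} and the coefficient bounds of Lemma \ref{coefficient-pro1}.

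First I would use Lemma \ref{weigted-app} applied to $f(t)={_\lambda D}_t^\beta v(t)$, together with \eqref{relation-Dv-Du}, to write
\begin{align*}
{_\lambda D}_t^\alpha u(t_n)={_\lambda D}_t^\beta v(t_n)
=(1-\sigma){_\lambda D}_t^\beta v(t_{n+\sigma})+\sigma{_\lambda D}_t^\beta v(t_{n-1+\sigma})+{\cal O}(\tau^2).
\end{align*}
Then by Lemma \ref{appro-Dv} each exact fractional derivative is replaced by its fast counterpart at the cost of ${\cal O}(\tau^{3-\beta_0}+\epsilon)$, so that the combination equals $(1-\sigma){^{\cal FH}}{\cal {\widehat D}}_t^\beta v^{n+\sigma}+\sigma{^{\cal FH}}{\cal {\widehat D}}_t^\beta v^{n-1+\sigma}$ plus ${\cal O}(\tau^2+\epsilon)$. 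By the identity \eqref{weighted-discretization} this last sum is exactly $\sum_{k=0}^n {\bf g}_k^{(n+1)}(v^{k+1-\sigma}-v^{k-\sigma})$. So far the accumulated error is ${\cal O}(\tau^2+\epsilon)$, which is within the claimed bound (note ${\bf g}_0^{(n+1)}\ge Ct_{n+\sigma}^{1-\alpha_{i_0}}\ge C\tau^{1-\alpha_{i_0}}$ is bounded below, but more importantly the pure ${\cal O}(\tau^2)$ term is dominated by ${\bf g}_0^{(n+1)}\tau^2$ only if ${\bf g}_0^{(n+1)}$ is bounded below by a constant, which it need not be for large $n$; hence I must actually track the ${\cal O}(\tau^2)$ term carefully — see below).

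The main obstacle is the order-reduction step. Here I substitute \eqref{vktheta} into $\sum_{k=0}^n {\bf g}_k^{(n+1)}(v^{k+1-\sigma}-v^{k-\sigma})$. For $k\ge 1$ the difference $v^{k+1-\sigma}-v^{k-\sigma}$ becomes $({\widehat v}^{k+1-\sigma}-{\widehat v}^{k-\sigma})$ plus differences of the remainder terms $R_t,R_{\widehat t},R_v$ at consecutive levels, which are ${\cal O}(\tau^3)$ by the ${\cal C}^4$ hypothesis; summing against ${\bf g}_k^{(n+1)}$ and using part $(c)$ of Lemma \ref{coefficient-pro1} to bound $\tau\sum_{k\ge1}{\bf g}_k^{(n+1)}\le Ct_{n-1}^{2-\alpha_0}\le C$, this contributes ${\cal O}(\tau^2)$. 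The $k=0$ term is the delicate one: $v^{1-\sigma}-v^{-\sigma}$ versus ${\widehat v}^{1-\sigma}-{\widehat v}^{-\sigma}$. Using \eqref{vktheta} for $v^{1-\sigma}$ and the definitions $v^{-\sigma}={\tilde b}_n(v^{1-\sigma}-v^0)+v^0$, ${\widehat v}^{-\sigma}={\tilde b}_n({\widehat v}^{1-\sigma}-u_t^0)+u_t^0$, the discrepancy is $(1-{\tilde b}_n)$ times an ${\cal O}(\tau^2)$ remainder (with $v^0=u_t^0$ exactly). Since $b_n<2{\bf g}_0^{(n+1)}$ by part $(b)$, one has $|{\tilde b}_n|\le C$, so $|1-{\tilde b}_n|\le C$, and the $k=0$ contribution is ${\bf g}_0^{(n+1)}\cdot{\cal O}(\tau^2)$. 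This is precisely where the factor ${\bf g}_0^{(n+1)}$ in the stated bound comes from, and it is also why the earlier ${\cal O}(\tau^2+\epsilon)$ from Steps 1--2 is acceptable: for small $n$, ${\bf g}_0^{(n+1)}\sim\tau^{1-\alpha_{i_0}}$ can be large, dominating the bare $\tau^2$; for large $n$ it is bounded below by a fixed positive constant, so ${\bf g}_0^{(n+1)}\tau^2\gtrsim\tau^2$ and again absorbs the bare term. Collecting all contributions yields $|{_\lambda D}_t^\alpha u(t_n)-{^{\cal FH}}{\cal {\widehat D}}_t^\alpha u^{n+1}|\le C({\bf g}_0^{(n+1)}\tau^2+\epsilon)$, completing the proof. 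The one subtlety to verify explicitly is that the bare ${\cal O}(\tau^2)$ term from Lemma \ref{weigted-app} and the ${\cal O}(\tau^{3-\beta_0})$ term from Lemma \ref{appro-Dv} are indeed $\le C{\bf g}_0^{(n+1)}\tau^2$ uniformly in $n$; this follows from ${\bf g}_0^{(n+1)}\ge Ct_{n+\sigma}^{1-\alpha_{i_0}}\ge C\min\{1,T^{1-\alpha_{i_0}}\}>0$ is bounded below, hence $\tau^2\le C{\bf g}_0^{(n+1)}\tau^2$.
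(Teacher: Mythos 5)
Your proposal is correct and follows essentially the same route as the paper: the same three-way decomposition (Lemma \ref{weigted-app} plus \eqref{relation-Dv-Du}, then Lemma \ref{appro-Dv}, then the order-reduction substitution via \eqref{weighted-discretization} and \eqref{vktheta}, with the $k\ge1$ terms controlled by Lemma \ref{coefficient-pro1}$(c)$,$(d)$ and the $k=0$ term by $(b)$), and the same final absorption of the bare ${\cal O}(\tau^2)$ using the lower bound in $(a)$. The only blemish is the intermediate claim $Ct_{n+\sigma}^{1-\alpha_{i_0}}\ge C\tau^{1-\alpha_{i_0}}$ (the inequality goes the other way since the exponent is negative), but your concluding bound ${\bf g}_0^{(n+1)}\ge CT^{1-\alpha_{i_0}}>0$ is the correct one and is what the paper implicitly uses.
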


\begin{proof}
By \eqref{weighted-discretization}  and \eqref{vktheta}--\eqref{full-appro}, we can derive

\begin{align}\notag
(1-\sigma) ~{^{\cal FH}}{\cal {\widehat D}}_t^\beta v^{n+\sigma}+\sigma~ {^{\cal FH}}{\cal {\widehat D}}_t^\beta v^{n-1+\sigma}={^{\cal FH}}{\cal {\widehat D}}_t^\alpha u^{n+1}-{\tilde R}_t^{n+1}, \quad 1\leq n\leq N-1,
\end{align}
where
\begin{align*}
{\tilde R}_t^{n+1} =&\sum_{k=2}^n  {\bf g}_k^{(n+1)}\bigg[(2-2\sigma)\left(R_t^{k+\frac12}-R_t^{k-\frac12}\right)+(2\sigma-1)\left(R_{\widehat t}^{k}-R_{\widehat t}^{k-1} \right)
-(2-2\sigma)\left(R_{v}^{k+\frac12}-R_{v}^{k-\frac12} \right)\bigg]\\
&+{\bf g}_1^{(n+1)}\bigg[(2-2\sigma)\left(R_t^{\frac32}-R_t^{\frac12}\right)-(2-2\sigma)\left(R_{v}^{\frac32}-R_{v}^{\frac12} \right)+ (2\sigma-1)R_{\widehat t}^1\bigg]\\
&+{\bf g}_0^{(n+1)}(1-{\tilde b^n})(2-2\sigma)\left(R_t^{\frac12}-R_{v}^{\frac12}\right).
\end{align*}

With the help of Lemma \ref{coefficient-pro1} (b), (c) and (d), we have
\begin{align*}
\big|{\tilde R}_t^{n+1}\big|
 \leq & (2-2\sigma)\sum_{k=1}^n {\bf g}_k^{(n+1)}\Big|R_t^{k+\frac12}-R_t^{k-\frac12}\Big|+(2\sigma-1)\sum_{k=2}^n {\bf g}_k^{(n+1)}\Big|R_{\widehat t}^{k}-R_{\widehat t}^{k-1}\Big|\\
 & +(2-2\sigma)\sum_{k=1}^n {\bf g}_k^{(n+1)} \Big|R_{v}^{k+\frac12}-R_{v}^{k-\frac12}\Big|+ (2\sigma-1){\bf g}_1^{(n+1)}\big|R_{\widehat t}^1\big|\\
&+(2-2\sigma){\bf g}_0^{(n+1)}|1-{\tilde b^n}|\left( \big|R_t^{\frac12}\big|+\big|R_{v}^{\frac12}\big|\right)\\
 \leq & C\sum_{k=1}^n {\bf g}_k^{(n+1)}\tau^3+C({\bf g}_0^{(n+1)}+{\bf g}_1^{(n+1)}) \tau^2
 \leq C{\bf g}_0^{(n+1)}\tau^2,
\end{align*}
in which  ${\bf g}_0^{(n+1)}{\tilde b^n}=\frac{3\sigma-1}{2(1-\sigma)}b_n\leq C {\bf g}_0^{(n+1)}$ has been used.

Applying the relation \eqref{relation-Dv-Du}, Lemmas \ref{appro-Dv} and \ref{weigted-app}, we have
\begin{align*}
{_\lambda D}_t^\alpha u(t_{n})={_\lambda D}_t^\beta v(t_{n})
=&(1-\sigma){_\lambda D}_t^\beta v(t_{n+\sigma})+\sigma {_\lambda D}_t^\beta v(t_{n-1+\sigma})
+{\cal O}(\tau^2)\\
=&(1-\sigma) {^{\cal FH}}{\cal {\widehat D}}_t^\beta v^{n+\sigma}+\sigma {^{\cal FH}}{\cal {\widehat D}}_t^\beta v^{n-1+\sigma}+{\cal O}(\tau^2+\epsilon).
\end{align*}
Therefore,
\begin{align*}
\left|{_\lambda D}_t^\alpha u(t_{n})-{^{\cal FH}}{\cal {\widehat D}}_t^\alpha u^{n+1} \right|=&
\left| (1-\sigma) {^{\cal FH}}{\cal {\widehat D}}_t^\beta v^{n+\sigma}+\sigma {^{\cal FH}}{\cal {\widehat D}}_t^\beta v^{n-1+\sigma}-{^{\cal FH}}{\cal {\widehat D}}_t^\alpha u^{n} \right|
+{\cal O}(\tau^2+\epsilon)\\
=&\big|{\tilde R}_t^{n+1}\big|+{\cal O}(\tau^2+\epsilon)
\leq C\left({\bf g}_0^{(n+1)}\tau^2 +\epsilon \right).
\end{align*}
\end{proof}

\subsubsection{The first time level approximation}
Note that the discretization \eqref{full-appro} will be used to solve the grid function $u^{n+1}~(n\geq 1)$. For the first level grid function, denote ${\bf g}_0^{(1)}=\frac{{^{\cal F} {{\widehat g}_0^{(1)}}}}{1-\sigma}$ and
$${^{\cal FH}}{\cal {\widehat D}}_t^\alpha u^{1}={\bf g}_0^{(1)}({\widehat v}^{1-\sigma}-u_t^0).$$
Then it can be observed that
$${^{\cal FH}}{\cal {\widehat D}}_t^\beta v^{\sigma}={^{\cal FH}}{\cal {\widehat D}}_t^\alpha u^{1}+(2-2\sigma){\bf g}_0^{(1)}(R_v^{\frac12}-R_t^{\frac12}).$$
By the relation \eqref{relation-Dv-Du} and Lemma \ref{appro-Dv}
, we further get
\begin{align}\label{first-level-appro}
{_\lambda D}_t^\alpha u(t_{\sigma})={_\lambda D}_t^\beta v(t_{\sigma})={^{\cal FH}}{\cal {\widehat D}}_t^\beta v^{\sigma}+R^\sigma={^{\cal FH}}{\cal {\widehat D}}_t^\alpha u^{1}+R^\sigma+R^1,
\end{align}
where
\begin{align}\label{Rsigma}
R^\sigma={\cal O}(\tau^{3-\beta_0})\quad  \mbox{and}\quad
R^1=(2-2\sigma){\bf g}_0^{(1)}(R_v^{\frac12}-R_t^{\frac12})\leq C{\bf g}_0^{(1)}\tau^2.
\end{align}

\subsection{The fast linearized scheme}\label{scheme-section}
Before proposing our numerical scheme, we need an important lemma which provides a fitted approximation (as it plays a significant role in our later convergence analysis) to the time discretization on diffusion term. The lemma reads as:
\begin{lemma} \label{implictscheme} {\rm(\cite{LyuVongNA2017})}
Suppose $u(t)\in{\cal C}^2[0,T]$. For $n\ge1$, it holds that
$$u(t_n)=\dfrac{w^{n+1}+w^n}2+{\cal O}(\tau^2),$$
where
\begin{align*}
w^n=&\Big(\frac32-\sigma\Big)[\sigma u^n+(1-\sigma)u^{n-1}]
+\Big(\sigma-\frac12\Big)[\sigma u^{n-1}+(1-\sigma)u^{n-2}],~~n\ge2,\\
w^1=&\Big(\frac32-\sigma\Big)[\sigma u^1+(1-\sigma)u^{0}]
+\Big(\sigma-\frac12\Big)[\sigma u^{0}+(1-\sigma)(u^{1}-2\tau u_t^0)].
\end{align*}
\end{lemma}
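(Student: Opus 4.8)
The plan is to prove Lemma \ref{implictscheme} by a direct Taylor expansion argument. The key observation is that both $w^{n+1}$ and $w^n$ should be interpreted as $O(\tau^2)$-accurate approximations of $u(t_n)$; then their average inherits the same accuracy. First I would rewrite the inner brackets using the $\sigma$-weighted interpolation formula: for a $\mathcal{C}^2$ function, $\sigma u(t_j)+(1-\sigma)u(t_{j-1}) = u(t_{j-1+\sigma}) + O(\tau^2)$, since this is linear interpolation at the point $t_{j-1+\sigma}$ lying between $t_{j-1}$ and $t_j$. Applying this to $w^n$ for $n\ge 2$, I get $w^n = (\tfrac32-\sigma)u(t_{n-1+\sigma}) + (\sigma-\tfrac12)u(t_{n-2+\sigma}) + O(\tau^2)$. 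The two nodes $t_{n-1+\sigma}$ and $t_{n-2+\sigma}$ differ from $t_{n-1}$ by $\sigma\tau$ and $-(1-\sigma)\tau$ respectively, and the coefficients $\tfrac32-\sigma$ and $\sigma-\tfrac12$ sum to $1$, so this is itself a (possibly extrapolatory) linear combination; expanding each term around $t_{n-1}$ via Taylor's theorem gives $w^n = u(t_{n-1}) + c_1\tau u'(t_{n-1}) + O(\tau^2)$ for some constant $c_1$ depending only on $\sigma$. A short computation of $c_1 = (\tfrac32-\sigma)\sigma - (\sigma-\tfrac12)(1-\sigma)$ shows $c_1 = \sigma$, so in fact $w^n = u(t_{n-1}) + \sigma\tau u'(t_{n-1}) + O(\tau^2) = u(t_{n-1+\sigma}) + O(\tau^2)$.

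Similarly $w^{n+1} = u(t_{n+\sigma}) + O(\tau^2)$, hence
\begin{align*}
\frac{w^{n+1}+w^n}{2} = \frac{u(t_{n+\sigma})+u(t_{n-1+\sigma})}{2} + O(\tau^2).
\end{align*}
Finally, $t_{n+\sigma}$ and $t_{n-1+\sigma}$ are symmetric about $t_{n-1/2+\sigma}$, which is within $O(\tau)$ of $t_n$; but more precisely, averaging the Taylor expansions of $u(t_{n+\sigma})$ and $u(t_{n-1+\sigma})$ about $t_n$ gives $\tfrac12[u(t_{n+\sigma})+u(t_{n-1+\sigma})] = u(t_n) + \tfrac12(2\sigma-1)\tau u'(t_n) + O(\tau^2)$, so there would be a spurious $O(\tau)$ term unless it cancels. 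To kill it, I would instead keep the first-order terms explicitly throughout: $w^{n+1} = u(t_n) + \sigma\tau u'(t_n) + O(\tau^2)$ and $w^n = u(t_{n-1}) + \sigma\tau u'(t_{n-1}) + O(\tau^2) = u(t_n) - (1-\sigma)\tau u'(t_n) + O(\tau^2)$, whose average is exactly $u(t_n) + \tfrac12(2\sigma-1)\tau u'(t_n) + O(\tau^2)$. This still leaves a first-order term, which signals that the correct expansion center or the correct reading of the coefficients must make it vanish — so the delicate point is bookkeeping the linear terms correctly.

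The hard part, therefore, is getting the first-order cancellation right: I expect that a more careful accounting (expanding $w^{n+1}$ about $t_n$ and $w^n$ about $t_n$ rather than about $t_{n-1}$, and tracking which combinations the coefficients $\tfrac32-\sigma$, $\sigma-\tfrac12$ produce) shows that $w^{n+1}$ and $w^n$ are the two-point values whose average is genuinely second-order accurate at $t_n$ — this is precisely the design principle behind the Crank--Nicolson-type splitting $\tfrac12(w^{n+1}+w^n)$. The cleanest route is: expand each $u(t_{j-1+\sigma})$ appearing in $w^{n+1}$ and $w^n$ in a Taylor series about $t_n$ up to second order, collect the constant coefficient (which must be $1$), collect the coefficient of $\tau u'(t_n)$ (which must be $0$), and bound the remainder using $\|u''\|_{\infty}$. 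The $n=1$ case is handled identically once one notes that the substitution $u^{1}-2\tau u_t^0$ in the definition of $w^1$ is exactly the $O(\tau^3)$-accurate expression for the "ghost value" $u(t_{-1})$ obtained from the initial condition $u_t(x,0)=\psi(x)$, so replacing $u(t_{-1})$ by $u^1 - 2\tau u_t^0$ introduces only an $O(\tau^2)$ perturbation (after multiplication by the $O(1)$ coefficient) and the $n\ge2$ argument applies verbatim. Throughout I would only invoke Taylor's theorem with integral remainder and the hypothesis $u\in\mathcal{C}^2[0,T]$; no earlier lemma from the paper is needed for this statement.
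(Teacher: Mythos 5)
The paper itself gives no proof of this lemma --- it is quoted from \cite{LyuVongNA2017} --- so the only question is whether your Taylor-expansion argument is sound. The strategy is the right one, but it contains a concrete arithmetic error that derails the key cancellation, and you then leave that cancellation unresolved ("I expect that a more careful accounting\dots shows\dots"), which is exactly the step that constitutes the proof. The error is in the first-moment coefficient: expanding $w^n=(\tfrac32-\sigma)u(t_{n-1+\sigma})+(\sigma-\tfrac12)u(t_{n-2+\sigma})+{\cal O}(\tau^2)$ about $t_{n-1}$ gives
\[
c_1=\Big(\tfrac32-\sigma\Big)\sigma-\Big(\sigma-\tfrac12\Big)(1-\sigma)
=\Big(\tfrac32\sigma-\sigma^2\Big)-\Big(\tfrac32\sigma-\sigma^2-\tfrac12\Big)=\tfrac12,
\]
not $\sigma$ as you claim. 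Consequently $w^n=u(t_{n-1})+\tfrac12\tau u'(t_{n-1})+{\cal O}(\tau^2)=u(t_{n-1/2})+{\cal O}(\tau^2)$, \emph{not} $u(t_{n-1+\sigma})+{\cal O}(\tau^2)$. Equivalently, the barycenter of the two nodes is $(\tfrac32-\sigma)t_{n-1+\sigma}+(\sigma-\tfrac12)t_{n-2+\sigma}=t_{n-1/2}$, so $w^n$ is a second-order (extrapolatory) approximation of $u$ at the half-integer point $t_{n-1/2}$.

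With the corrected value the proof closes immediately and the "spurious ${\cal O}(\tau)$ term" you worry about never appears: $w^{n+1}=u(t_{n+1/2})+{\cal O}(\tau^2)$ and $w^{n}=u(t_{n-1/2})+{\cal O}(\tau^2)$, and since $t_{n+1/2}$ and $t_{n-1/2}$ are symmetric about $t_n$, averaging the two Taylor expansions about $t_n$ kills the first-order terms exactly, giving $\tfrac12(w^{n+1}+w^n)=u(t_n)+{\cal O}(\tau^2)$. (Had your value $c_1=\sigma$ been correct, the lemma would in fact be false for $\sigma\neq\tfrac12$, which should have been a signal to recheck the arithmetic rather than to hope for a different expansion center.) Your treatment of the $n=1$ case is fine in spirit: $u^1-2\tau u_t^0$ is the ghost value $u(t_{-1})$ up to ${\cal O}(\tau^2)$ under $u\in{\cal C}^2$, and it enters $w^1$ multiplied by the bounded coefficient $(\sigma-\tfrac12)(1-\sigma)$, so the $n\ge2$ argument carries over once the first-moment computation above is repaired.
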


For a given positive integer $M$, denote $h=\frac{b-a}M$ be the spatial step size, and ${\bar I}=\{i| ~i=0,1,\ldots,M\}$ and $I=\{i| ~i=1,2,\ldots,M-1\}$ be the index spaces, and take $x_i=ih~(i\in {\bar I})$ be the spatial uniform partition.
Denote $\varphi_i=\varphi(x_i)$ and $\psi_i=\psi(x_i)$. Suppose ${\cal V}_h=\big\{u|u=\{u_i|0\leq i\leq M\},~u_0=u_M=0\big\}$, then for any $u,~v\in{\cal V}_h$, the discrete operators are needed:
$$\delta_xu_{i-\frac12}=\frac{u_i-u_{i-1}}h,~~\delta_x^2u_i=\frac{\delta_xu_{i+\frac12}-\delta_xu_{i-\frac12}}h=\frac{u_{i+1}-2u_{i}+u_{i-1}}h.$$
Denote the numerical solution at the point $(x_i,t_n)$ by $u_i^n$, and $p(x_i,t_n)$ by $p_i^n$.

Considering the equation \eqref{eq1}--\eqref{eq3} on the grid point $(x_i,t_n)$:
\begin{align*}
{_\lambda D}_t^\alpha u(x_i,t_{n})=\frac{\partial^2 u}{\partial x^2}(x_i,t_n)+f(u(x_i,t_n))+p_i^n,\quad i\in I, ~1\leq n\leq N-1.
\end{align*}
Applying Lemmas \ref{main-lemma} and \ref{implictscheme}, and standard approximation on space derivative, the above equation yields
\begin{align}\label{equation-point-n}
{^{\cal FH}}{\cal {\widehat D}}_t^\alpha u_i^{n+1}=\delta_x^2\left(\frac{w_i^{n+1}+w_i^n}{2}\right)+f(u_i^n)+p_i^n+R_i^{n+1},\quad i\in I, ~1\leq n\leq N-1,
\end{align}
where
$$|R_i^{n+1}|\leq C\left({\bf g}_0^{(n+1)}\tau^2+h^2+\epsilon \right).$$
The equation \eqref{equation-point-n} solves the solution $u_i^{n+1}$, thus it is a linearized equation since the nonlinear term is stayed on the previous time level $t_n$.

For the first level solution, considering the equation \eqref{eq1}--\eqref{eq3} on the grid point $(x_i,t_\sigma)$:
\begin{align}\label{equation-point-1p}
{_\lambda D}_t^\alpha u(x_i,t_{\sigma})=\frac{\partial^2 u}{\partial x^2}(x_i,t_\sigma)+f(u(x_i,t_\sigma))+p_i^\sigma,\quad i\in I.
\end{align}
With the help of Taylor expansion, we use the initial data to approximate $u(x_i,t_\sigma)$:
\begin{align}\label{sigma-appro}
u(x_i,t_\sigma)=u(x_i,t_0)+\sigma\tau u_t(x_i,t_0)+{\cal O}(\tau^2)=\varphi_i+\sigma\tau \psi_i +{\cal O}(\tau^2), \quad i\in I.
\end{align}
Then for $\Omega_f\supset [\inf\{\varphi+\sigma\tau\psi\},\sup\{\varphi+\sigma\tau\psi\}]$,
it follows from \eqref{equation-point-1p},  \eqref{first-level-appro} and \eqref{sigma-appro} that
\begin{align}\label{equation-point-1}
{^{\cal FH}}{\cal {\widehat D}}_t^\alpha u_i^{1}=(\varphi_{xx}+\sigma\tau\psi_{xx})_i
+f\left(\varphi_i+\sigma\tau\psi_i\right)+p_i^\sigma+{\tilde R}_i^\sigma+R_i^1,\quad i\in I,
\end{align}
where ${\tilde R}_i^\sigma={\cal O}(\tau^{3-\beta_0}+\tau^2)={\cal O}(\tau^2)$. We can see that equation \eqref{equation-point-1} is also a linearized approximation.

Omitting $R_i^{n+1}$ in \eqref{equation-point-n}, and ${\tilde R}_i^\sigma,R_i^1$ in \eqref{equation-point-1}, we obtain the following fully fast linearized scheme for solving the nonlinear problem \eqref{eq1}--\eqref{eq3}:
\begin{align}\label{sch1}
&{^{\cal FH}}{\cal {\widehat D}}_t^\alpha u_i^{n+1}=\delta_x^2\left(\frac{w_i^{n+1}+w_i^n}{2}\right)+f(u_i^n)+p_i^n,\quad i\in I, ~1\leq n\leq N-1,\\\label{sch2}
&{^{\cal FH}}{\cal {\widehat D}}_t^\alpha u_i^{1}=(\varphi_{xx}+\sigma\tau\psi_{xx})_i
+f\left(\varphi_i+\sigma\tau\psi_i\right)+p_i^\sigma,\quad i\in I,\\\label{sch3}
&u_0^n=u_M^n=0,~0\leq n\leq N,\\\label{sch4}
&u_i^0=\varphi_i,~(u_t)_i^0=\psi_i,~i\in {\bar I}.
\end{align}
\begin{remark}
In order to interpret the fast algorithm of the proposed scheme \eqref{sch1}--\eqref{sch4}, we display the efficient computation for the fractional discretization ${^{\cal FH}}{\cal {\widehat D}}_t^\alpha u_i^{n+1}$ based on the recursive relation \eqref{Fast-dv} and the refined grid functions \eqref{hatvu1}--\eqref{hatvu3}:

For $n=0,$ we can obtain $u_i^1$, $v_i^1$ {\rm(}combining \eqref{for-v1}{\rm)} by solving \eqref{sch2}.

For $n=1,$ we can solve the following system to obtain $u_i^2,~v_i^2$:
\begin{equation}\notag
\left\{\begin{array}{ll}
\dsum_{r=0}^m\lambda_r\dsum_{j=1}^{N^{(\beta_r)}}{\widehat\omega}_j^{(\beta_r)}(1-\sigma)V_j^{1}+{\bf a_0}(v_i^{2-\sigma}-v_i^{1-\sigma})
=\delta_x^2\left(\frac{w_i^{2}+w_i^1}{2}\right)+f(u_i^1)+p_i^1,\\
(1-\sigma)v_i^2+\sigma v_i^1=(2-2\sigma)\delta_t u_i^{\frac32}+(2\sigma-1)\delta_{\widehat t}u_i^{1},~~i\in I,
\end{array}\right.
\end{equation}
where ${\bf a_0}=\dsum_{r=0}^m\lambda_r \mu^{(\beta_r)}a_0^{(\beta_r)},~V_j^{1}=A_j^{(\beta_r)}(v_i^{1}-v_i^{0})+B_j^{(\beta_r)}(v_i^{2}-v_i^1),$ and $v_i^{2-\sigma},~v_i^{1-\sigma}$ are defined by \eqref{vk+1-sigma}.

For $2\leq n\leq N-1$,
\begin{align*}
&{^{\cal FH}}{\cal {\widehat D}}_t^\alpha u_i^{n+1}=\sum_{r=0}^m\lambda_r\dsum_{j=1}^{N^{(\beta_r)}}{\widehat\omega}_j^{(\beta_r)} V_j^{n-\sigma}+{\bf a_0}({\widehat v_i^{n+1-\sigma}}-{\widehat v_i^{n-\sigma}}),~~i\in I,
\end{align*}
where a recursive relation
\begin{align*}
V_j^{n-\sigma}=
e^{-s_i^{(\beta_r)}\tau}V_j^{n-1-\sigma} +A_j^{(\beta_r)}({\widehat v_i^{n-\sigma}}-{\widehat v_i^{n-1-\sigma}})+B_j^{(\beta_r)}({\widehat v_i^{n+1-\sigma}}-{\widehat v_i^{n-\sigma}}),
\end{align*}
with $V_j^{1-\sigma}=(1-\sigma)\Big[A_j^{(\beta_r)}(v_i^{1}-v_i^{0})+B_j^{(\beta_r)}(v_i^{2}-v_i^1)\Big]$.
\end{remark}
\section{Analysis of the proposed scheme}\label{Sec-conver}

\subsection{Some necessary lemmas}

To show the convergence of proposed scheme, we need some important lemmas.

\begin{lemma} \label{lemmafast} {\rm(\cite{FL21})}
For the sequence $\{{^{\cal F}{g_k^{(n+1,\beta)}}}\}~(k=0,\ldots,n)$, it holds that
\begin{equation}\notag
(2\sigma-1)~{^{\cal F}{g_n^{(n+1,\beta)}}}-\sigma~{^{\cal F}{g_{n-1}^{(n+1,\beta)}}}\geq\left\{\begin{array}{ll}
(2\sigma-1){{g}_n^{(n+1,\beta)}}-\sigma{{g}_{n-1}^{(n+1,\beta)}}-(6\sigma-1)\frac{\epsilon}{4\Gamma(1-\beta)},~n=1,\\
(2\sigma-1){{g}_n^{(n+1,\beta)}}-\sigma{{g}_{n-1}^{(n+1,\beta)}}-(7\sigma-1)\frac{\epsilon}{4\Gamma(1-\beta)},~n\geq2,
\end{array}\right.
\end{equation}
and
$${^{\cal F} { g_n^{(n+1,\beta)}}}>{^{\cal F} { g_{n-1}^{(n+1,\beta)}}}>\cdots>{^{\cal F} {g_0^{(n+1,\beta)}}}>C^{\cal F}>0,$$
where $C^{\cal F}=\min\{\mu^{(\beta)} a_0^{(\beta)},{^{\cal F}}g_0^{(n+1,\beta)}\}.$
\end{lemma}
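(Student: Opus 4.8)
The plan is to read everything off the explicit representation \eqref{fast-coefficient}--\eqref{gF} of the ${\cal FL}2$-$1_\sigma$ coefficients together with the SOE estimate of Lemma \ref{soe}, by comparing ${^{\cal F}g_k^{(n+1,\beta)}}$ with its ${\cal L}2$-$1_\sigma$ counterpart $g_k^{(n+1,\beta)}$. The starting point is the structural fact that the fast formula differs from the exact one only in that, on each history subinterval $[t_{k-1},t_k]$ with $k\le n$, the kernel $(t_{n+\sigma}-s)^{-\beta}$ has been replaced by $\sum_j\omega_j^{(\beta)}e^{-s_j^{(\beta)}(t_{n+\sigma}-s)}$; since $t_{n+\sigma}-s\ge t_{n+\sigma}-t_n=\sigma\tau=\widehat\tau$ there, the SOE bound $\le\epsilon$ applies pointwise on each such subinterval. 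After the substitution $s=t_{k-1}+\tau\xi$ the pieces $A_j^{(\beta)}$ and $B_j^{(\beta)}$ (with their exponential prefactors) become integrals of the fixed weights $\tfrac32-\xi$ and $\xi-\tfrac12$ over $\xi\in[0,1]$ against the shifted exponential, so each coefficient splits as ${^{\cal F}g_k^{(n+1,\beta)}}=g_k^{(n+1,\beta)}+E_k$ with $|E_k|\le\frac{\epsilon}{\Gamma(1-\beta)}$ times the sum of the $L^1[0,1]$-masses of the weights that enter ${^{\cal F}g_k^{(n+1,\beta)}}$; those masses are $\int_0^1(\tfrac32-\xi)\,d\xi=1$ for an $A$-weight and $\int_0^1|\xi-\tfrac12|\,d\xi=\tfrac14$ for a $B$-weight.

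For the first assertion I would specialize to $k=n$ and $k=n-1$. In ${^{\cal F}g_n^{(n+1,\beta)}}$ the term $\mu^{(\beta)}a_0^{(\beta)}$ coincides with the one in $g_n^{(n+1,\beta)}$, so only a single $B$-weight survives and $|E_n|\le\frac{\epsilon}{4\Gamma(1-\beta)}$. For $n\ge2$, ${^{\cal F}g_{n-1}^{(n+1,\beta)}}$ is a middle-range coefficient carrying one $A$-weight and one $B$-weight, whence $|E_{n-1}|\le\big(1+\tfrac14\big)\frac{\epsilon}{\Gamma(1-\beta)}=\frac{5\epsilon}{4\Gamma(1-\beta)}$; for $n=1$ it reduces to ${^{\cal F}g_0^{(2,\beta)}}$, which carries only an $A$-weight, so $|E_0|\le\frac{\epsilon}{\Gamma(1-\beta)}=\frac{4\epsilon}{4\Gamma(1-\beta)}$. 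Since every $\beta_r\in(0,1)$, the admissible range $\sigma\in[1-\tfrac{\beta_0}{2},1-\tfrac{\beta_m}{2}]$ lies in $(\tfrac12,1)$, so $2\sigma-1>0$; writing $(2\sigma-1)\,{^{\cal F}g_n^{(n+1,\beta)}}-\sigma\,{^{\cal F}g_{n-1}^{(n+1,\beta)}}=(2\sigma-1)g_n^{(n+1,\beta)}-\sigma g_{n-1}^{(n+1,\beta)}+(2\sigma-1)E_n-\sigma E_{n-1}$ and bounding the last two terms below by $-(2\sigma-1)|E_n|-\sigma|E_{n-1}|$ gives exactly $-(7\sigma-1)\frac{\epsilon}{4\Gamma(1-\beta)}$ when $n\ge2$ and $-(6\sigma-1)\frac{\epsilon}{4\Gamma(1-\beta)}$ when $n=1$, which is the claimed bound.

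For the monotone chain I would read the positivity directly from \eqref{fast-coefficient}: $\widehat\omega_j^{(\beta)}>0$, $\mu^{(\beta)}a_0^{(\beta)}>0$, $A_j^{(\beta)}>0$ (its integrand $\tfrac32-\xi$ is positive on $[0,1]$), $B_j^{(\beta)}>0$ (since $e^{-s_j^{(\beta)}\tau(\sigma+1-\xi)}$ increases in $\xi$ and $\int_0^1(\xi-\tfrac12)\,d\xi=0$, so $B_j^{(\beta)}=\int_0^1(\xi-\tfrac12)\big[e^{-s_j^{(\beta)}\tau(\sigma+1-\xi)}-e^{-s_j^{(\beta)}\tau(\sigma+\frac12)}\big]\,d\xi>0$), and $m\mapsto e^{-ms_j^{(\beta)}\tau}$ is strictly decreasing. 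When $k$ and $k-1$ are both in the middle range, ${^{\cal F}g_k^{(n+1,\beta)}}-{^{\cal F}g_{k-1}^{(n+1,\beta)}}$ is a sum of the manifestly positive terms $\widehat\omega_j^{(\beta)}\big[(e^{-(n-k-1)s_j^{(\beta)}\tau}-e^{-(n-k)s_j^{(\beta)}\tau})A_j^{(\beta)}+(e^{-(n-k)s_j^{(\beta)}\tau}-e^{-(n-k+1)s_j^{(\beta)}\tau})B_j^{(\beta)}\big]$, and the step at $k=1$ is analogous. I expect the main obstacle to be the endpoint step ${^{\cal F}g_n^{(n+1,\beta)}}>{^{\cal F}g_{n-1}^{(n+1,\beta)}}$ (and ${^{\cal F}g_1^{(2,\beta)}}>{^{\cal F}g_0^{(2,\beta)}}$ when $n=1$): there the difference equals $\mu^{(\beta)}a_0^{(\beta)}+\sum_j\widehat\omega_j^{(\beta)}\big[(1-e^{-s_j^{(\beta)}\tau})B_j^{(\beta)}-A_j^{(\beta)}\big]$, the bracket is negative, and one has to show that the dominant term $\mu^{(\beta)}a_0^{(\beta)}$ (of size $\tau^{-\beta}$) wins; applying the SOE estimate once more identifies this quantity with the positivity of the classical ${\cal L}2$-$1_\sigma$ increment $g_n^{(n+1,\beta)}-g_{n-1}^{(n+1,\beta)}>0$ (see \cite{AAA}) up to an ${\cal O}(\epsilon)$ remainder that is absorbed by the $\tau^{-\beta}$ scaling. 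Finally ${^{\cal F}g_0^{(n+1,\beta)}}$ is itself a sum of positive terms and is the smallest member of the chain, which gives ${^{\cal F}g_0^{(n+1,\beta)}}\ge C^{\cal F}>0$ with $C^{\cal F}=\min\{\mu^{(\beta)}a_0^{(\beta)},{^{\cal F}g_0^{(n+1,\beta)}}\}$.
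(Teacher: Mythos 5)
The paper offers no proof of this lemma --- it is imported verbatim from \cite{FL21} --- so there is nothing internal to compare against; your reconstruction must stand on its own, and for the most part it does. The first inequality is handled correctly and sharply: one can check (e.g.\ via the identity $(2-\beta)b_1^{(\beta)}=(\tfrac12+\sigma)a_1^{(\beta)}-\tfrac{1-\beta}{2}[(1+\sigma)^{1-\beta}+\sigma^{1-\beta}]$ and its analogues) that each ${^{\cal F}g_k^{(n+1,\beta)}}$ really does equal $g_k^{(n+1,\beta)}$ plus an error bounded by $\epsilon/\Gamma(1-\beta)$ times the $L^1$-mass of the interpolation weights it carries, and your bookkeeping ($\tfrac14$ for the lone $B$-weight at $k=n$, $1+\tfrac14$ for a middle coefficient, $1$ for $k=0$) reproduces the constants $(6\sigma-1)/4$ and $(7\sigma-1)/4$ exactly. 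The interior steps of the monotone chain are likewise immediate from the positivity of $\widehat\omega_j^{(\beta)},A_j^{(\beta)},B_j^{(\beta)}$ and the monotonicity of $m\mapsto e^{-ms_j^{(\beta)}\tau}$. The one place where you have an idea rather than a proof is the endpoint step ${^{\cal F}g_n^{(n+1,\beta)}}>{^{\cal F}g_{n-1}^{(n+1,\beta)}}$, which you rightly flag: to close it, reuse your own decomposition to get ${^{\cal F}g_n^{(n+1,\beta)}}-{^{\cal F}g_{n-1}^{(n+1,\beta)}}\ge g_n^{(n+1,\beta)}-g_{n-1}^{(n+1,\beta)}-\tfrac{3\epsilon}{2\Gamma(1-\beta)}$, and invoke from \cite{AAA} that $g_n^{(n+1,\beta)}-g_{n-1}^{(n+1,\beta)}=\mu^{(\beta)}\big(a_0^{(\beta)}+2b_1^{(\beta)}-a_1^{(\beta)}-b_2^{(\beta)}\big)$ is a positive constant depending only on $\beta$ and $\sigma$ times $\tau^{-\beta}\ge T^{-\beta}$. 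Two remarks follow from making this explicit: first, the strict chain only holds for $\epsilon$ sufficiently small, a hypothesis the statement suppresses but which the paper itself imposes later (cf.\ Lemma \ref{multi-coeff-property}); second, your instinct to route through the exact coefficients is genuinely necessary, since the crude comparison of $\mu^{(\beta)}a_0^{(\beta)}=\sigma(\sigma\tau)^{-\beta}/[(1-\beta)\Gamma(1-\beta)]$ against $\sum_j\widehat\omega_j^{(\beta)}A_j^{(\beta)}\lesssim\tfrac32(\sigma\tau)^{-\beta}/\Gamma(1-\beta)$ fails whenever $\sigma<\tfrac32(1-\beta)$, i.e.\ roughly for $\beta<\tfrac12$.
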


\begin{lemma}\label{lemmamultiterm}{\rm(\cite{multiterm})}
There exists a number $\tau_0>0$, when $\tau \leq\tau_0$, it holds that
\begin{align*}
(2\sigma-1){\widehat g}_n^{(n+1)}-\sigma{\widehat g}_{n-1}^{(n+1)}>\frac12\lambda_0\frac{\tau^{-\beta_0}}{\Gamma(2-\beta_0)}(1+\sigma)^{-\beta_0}[f_\sigma(1)+{\cal O}(\tau^{\beta_0-\beta_1})]>0,\quad
n\geq1,
\end{align*}
where 
$$f_\sigma(t)=\Big(3\sigma^2+5\sigma+2-\frac1{\sigma}\Big)t+\frac{2\sigma^2}t-5\sigma^2-7\sigma+1.$$
\end{lemma}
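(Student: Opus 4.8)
The plan is to localize the estimate to the dominant fractional order $\beta_0$ and then reduce it to an explicit inequality between functions of $\sigma$ and $\beta_0$ alone. First I would substitute ${\widehat g}_k^{(n+1)}=\sum_{r=0}^m\lambda_r g_k^{(n+1,\beta_r)}$ and recall from the ${\cal L}2$-$1_\sigma$ formula that $g_n^{(n+1,\beta)}=\mu^{(\beta)}\big(a_0^{(\beta)}+b_1^{(\beta)}\big)$ for every $n\ge1$, while $g_{n-1}^{(n+1,\beta)}=\mu^{(\beta)}\big(a_1^{(\beta)}-b_1^{(\beta)}\big)$ for $n=1$ and $g_{n-1}^{(n+1,\beta)}=\mu^{(\beta)}\big(a_1^{(\beta)}+b_2^{(\beta)}-b_1^{(\beta)}\big)$ for $n\ge2$. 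Consequently $(2\sigma-1){\widehat g}_n^{(n+1)}-\sigma{\widehat g}_{n-1}^{(n+1)}=\sum_{r=0}^m\lambda_r\mu^{(\beta_r)}\Phi(\sigma,\beta_r)$ for $n\ge2$, with $\Phi(\sigma,\beta)=(2\sigma-1)a_0^{(\beta)}-\sigma a_1^{(\beta)}+(3\sigma-1)b_1^{(\beta)}-\sigma b_2^{(\beta)}$, and for $n=1$ the same identity holds with $\Phi$ replaced by $\Phi(\sigma,\beta)+\sigma b_2^{(\beta)}$; in particular the left-hand side depends on $n$ only through the split $n=1$ versus $n\ge2$. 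Since $b_2^{(\beta)}>0$ — the trapezoidal rule underestimates $\int_{1+\sigma}^{2+\sigma}t^{1-\beta}\,dt$, $t^{1-\beta}$ being concave — the case $n\ge2$ is the binding one, so it suffices to bound $\sum_r\lambda_r\mu^{(\beta_r)}\Phi(\sigma,\beta_r)$ from below.

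Next I would peel off the $r=0$ summand. Since $\mu^{(\beta_r)}/\mu^{(\beta_0)}=\frac{\Gamma(2-\beta_0)}{\Gamma(2-\beta_r)}\tau^{\beta_0-\beta_r}$ with $\beta_0>\beta_1\ge\beta_r$ for $r\ge1$, and each $\Phi(\sigma,\beta_r)$ stays bounded as $\beta_r$ ranges over the compact set $\{\beta_1,\ldots,\beta_m\}\subset(0,1)$ and $\sigma$ over the fixed interval $[1-\frac{\beta_0}{2},1-\frac{\beta_m}{2}]$ supplied by Lemma~\ref{trunerror2} (here $\sum_r\lambda_r\le C$ is used), one gets
$$(2\sigma-1){\widehat g}_n^{(n+1)}-\sigma{\widehat g}_{n-1}^{(n+1)}=\lambda_0\mu^{(\beta_0)}\big[\Phi(\sigma,\beta_0)+{\cal O}(\tau^{\beta_0-\beta_1})\big].$$
The statement then reduces to the parameter inequality $\Phi(\sigma,\beta_0)\ge\frac12(1+\sigma)^{-\beta_0}f_\sigma(1)$ together with $f_\sigma(1)>0$. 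The latter is immediate: a short computation gives $f_\sigma(1)=\frac{(2\sigma-1)(1-\sigma)}{\sigma}$, which is positive and, because $\sigma$ lies in a compact subinterval of $(\frac12,1)$, bounded below by a positive constant. Taking $\tau_0$ small enough that the ${\cal O}(\tau^{\beta_0-\beta_1})$ correction is dominated by $\frac12(1+\sigma)^{-\beta_0}f_\sigma(1)$ then delivers both the strict positivity and the stated lower bound.

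For the parameter inequality I would insert the definitions of $a_0^{(\beta_0)},a_1^{(\beta_0)},b_1^{(\beta_0)},b_2^{(\beta_0)}$ into $\Phi(\sigma,\beta_0)$, use $x^{2-\beta_0}=x\,x^{1-\beta_0}$ to rewrite $\Phi$ as a combination of $\sigma^{1-\beta_0}$, $(1+\sigma)^{1-\beta_0}$ and $(2+\sigma)^{1-\beta_0}$ with rational coefficients in $\sigma,\beta_0$, and then factor out $(1+\sigma)^{-\beta_0}$, writing $\sigma^{-\beta_0}=(1+\sigma)^{-\beta_0}(\frac{1+\sigma}{\sigma})^{\beta_0}$ and $(2+\sigma)^{-\beta_0}=(1+\sigma)^{-\beta_0}(\frac{1+\sigma}{2+\sigma})^{\beta_0}$. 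What remains is a one-variable estimate in $\beta_0$, with $\sigma$ a parameter constrained by $2\sigma\le2-\beta_0$ (hence $2-\beta_0-2\sigma\le0$ and $2\sigma-1\ge1-\beta_0$); Taylor-expanding $(\frac{1+\sigma}{\sigma})^{\beta_0}$ and $(\frac{1+\sigma}{2+\sigma})^{\beta_0}$ to sufficient order and controlling the remainders, the leading contribution collapses to $(1+\sigma)^{-\beta_0}f_\sigma(1)$ while the factor $\frac12$ provides the slack that absorbs the higher-order terms.

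The hard part is precisely this last inequality: it is tight, in the sense that $\Phi(\sigma,\beta_0)$ and $\frac12(1+\sigma)^{-\beta_0}f_\sigma(1)$ both tend to $0$ as $(\sigma,\beta_0)$ approaches the corner $(1,0)$ of the admissible region, so no crude estimate survives and the coupling $\sigma\ge1-\beta_0/2$ between the two parameters must be genuinely exploited throughout the $\beta_0$-expansion. The bookkeeping of the subdominant orders — making $\tau_0$ and the implied constants independent of $n$ — is routine by comparison and relies only on $\sum_r\lambda_r\le C$ and on the $n$-independence observed in the first step.
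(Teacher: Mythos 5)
First, a remark on the comparison baseline: the paper does not prove this lemma at all --- it is quoted verbatim from \cite{multiterm}, so there is no in-paper argument to match your proposal against. Judged on its own terms, your reduction is sound and cleanly organized: the explicit formulas $g_n^{(n+1,\beta)}=\mu^{(\beta)}(a_0^{(\beta)}+b_1^{(\beta)})$, $g_{n-1}^{(n+1,\beta)}=\mu^{(\beta)}(a_1^{(\beta)}+b_2^{(\beta)}-b_1^{(\beta)})$ for $n\ge2$ (without $b_2^{(\beta)}$ for $n=1$) are correct, the $n$-independence and the observation that $b_2^{(\beta)}>0$ makes $n\ge2$ the binding case are right, the peeling of the $r=0$ term with relative error ${\cal O}(\tau^{\beta_0-\beta_1})$ is exactly where the ${\cal O}$-term in the statement comes from, and the identity $f_\sigma(1)=(2\sigma-1)(1-\sigma)/\sigma>0$ checks out. (Minor slip: the constraint from Lemma \ref{trunerror2} is $\sigma\ge1-\beta_0/2$, i.e.\ $2\sigma\ge2-\beta_0$; you wrote $2\sigma\le2-\beta_0$, although the consequences you then draw, $2-\beta_0-2\sigma\le0$ and $2\sigma-1\ge1-\beta_0$, are the ones matching the correct direction.)

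The genuine gap is the one you yourself flag: the inequality $\Phi(\sigma,\beta_0)\ge\frac12(1+\sigma)^{-\beta_0}f_\sigma(1)$ is asserted, not proved, and after your reduction it \emph{is} the entire content of the lemma. ``Taylor-expand $(\frac{1+\sigma}{\sigma})^{\beta_0}$ and $(\frac{1+\sigma}{2+\sigma})^{\beta_0}$ to sufficient order and control the remainders'' is a plan, not an argument, and your own diagnosis shows why it cannot be waved through: at $\beta_0=0$ one has $b_1^{(\beta)}=b_2^{(\beta)}=0$ and $\Phi(\sigma,0)=2\sigma(\sigma-1)<0$ for $\sigma<1$, while $\frac12 f_\sigma(1)>0$, so the inequality is \emph{false} off the admissible set and only the coupling $\sigma\ge1-\beta_0/2$ rescues it; both sides vanish as $(\sigma,\beta_0)\to(1,0)$ along that constraint, so the order of the expansion, the sign of every remainder, and the exact amount of slack the factor $\frac12$ buys all have to be exhibited explicitly. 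Until that two-parameter inequality is actually established (by a finite Taylor expansion with signed remainders, or by reducing to monotonicity/convexity of an auxiliary one-variable function as is done in \cite{multiterm} via $f_\sigma(t)$ with $t=((1+\sigma)/\sigma)^{\beta}$-type substitutions), the proposal proves the easy bookkeeping around the lemma but not the lemma itself.
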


\begin{lemma}\label{multi-coeff-property}
There exists a number $\tau_0>0$, when $\tau \leq\tau_0$, it holds that {\rm($n\geq1$)}
\begin{align}
&{\bf g}_n^{(n+1)}>{\bf g}_{n-1}^{(n+1)}>\ldots>{\bf g}_0^{(n+1)}>0,\label{ineq1}\\
&(2\sigma-1){\bf g}_n^{(n+1)}-\sigma {\bf g}_{n-1}^{(n+1)}>0,\label{ineq2}
\end{align}
for a sufficiently small $\epsilon$.
\end{lemma}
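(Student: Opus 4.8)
The plan is to reduce the two claimed inequalities \eqref{ineq1} and \eqref{ineq2} about the refined multi-term coefficients $\{{\bf g}_k^{(n+1)}\}$ to the single-term fast-coefficient estimates of Lemma \ref{lemmafast} and the non-fast multi-term estimate of Lemma \ref{lemmamultiterm}, paying the price of an $\epsilon$-perturbation which can be absorbed by taking $\epsilon$ small.

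First I would treat the monotonicity chain \eqref{ineq1}. Recall from \eqref{bfg} and \eqref{coe-relation} that ${\bf g}_k^{(n+1)}={}^{\cal F}{\widehat g}_k^{(n+1)}=\sum_{r=0}^m\lambda_r\,{}^{\cal F}g_k^{(n+1,\beta_r)}$ for $k\geq 1$, while ${\bf g}_0^{(n+1)}={}^{\cal F}{\widehat g}_0^{(n+1)}-\tfrac12 b_n$. For the strict increase ${\bf g}_k^{(n+1)}>{\bf g}_{k-1}^{(n+1)}$ at indices $k\geq 2$ I would simply sum the single-term inequality ${}^{\cal F}g_k^{(n+1,\beta_r)}>{}^{\cal F}g_{k-1}^{(n+1,\beta_r)}$ from Lemma \ref{lemmafast} against the positive weights $\lambda_r$. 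The only delicate link in the chain is ${\bf g}_1^{(n+1)}>{\bf g}_0^{(n+1)}$: here I would use \eqref{proper-g2}, namely ${}^{\cal F}{\widehat g}_1^{(n+1)}={}^{\cal F}{\widehat g}_0^{(n)}+b_n$, so that ${\bf g}_1^{(n+1)}-{\bf g}_0^{(n+1)}={}^{\cal F}{\widehat g}_0^{(n)}+\tfrac32 b_n$; since $b_n>0$ (it is a sum of positive terms, as $\widehat\omega_j,B_j>0$) and ${}^{\cal F}{\widehat g}_0^{(n)}>0$ by Lemma \ref{lemmafast}, positivity is immediate. Finally ${\bf g}_0^{(n+1)}>0$ follows from Lemma \ref{coefficient-pro1}(a) (the lower bound $Ct_{n+\sigma}^{1-\alpha_{i_1}}$), or alternatively from the representation \eqref{g0n11} showing ${\bf g}_0^{(n+1)}$ is a positive combination of the exponential integrals $A_j-\tfrac12 B_j=e^{-s_j\tau(\sigma+1-s_0)}>0$.

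Next I would handle \eqref{ineq2}, which is the substantive estimate. Writing ${\bf g}_n^{(n+1)}={}^{\cal F}{\widehat g}_n^{(n+1)}$ and ${\bf g}_{n-1}^{(n+1)}={}^{\cal F}{\widehat g}_{n-1}^{(n+1)}$ (valid for $n\geq 2$; the case $n=1$ needs ${\bf g}_0^{(2)}={}^{\cal F}{\widehat g}_0^{(2)}-\tfrac12 b_1$ and is handled separately below), I expand in the single-term coefficients and apply the first inequality of Lemma \ref{lemmafast} term by term:
\begin{align*}
(2\sigma-1){}^{\cal F}{\widehat g}_n^{(n+1)}-\sigma\,{}^{\cal F}{\widehat g}_{n-1}^{(n+1)}
&=\sum_{r=0}^m\lambda_r\Big[(2\sigma-1){}^{\cal F}g_n^{(n+1,\beta_r)}-\sigma\,{}^{\cal F}g_{n-1}^{(n+1,\beta_r)}\Big]\\
&\geq\sum_{r=0}^m\lambda_r\Big[(2\sigma-1)g_n^{(n+1,\beta_r)}-\sigma g_{n-1}^{(n+1,\beta_r)}\Big]-C\epsilon\\
&=(2\sigma-1){\widehat g}_n^{(n+1)}-\sigma{\widehat g}_{n-1}^{(n+1)}-C\epsilon,
\end{align*}
where $C$ collects the constants $(7\sigma-1)\lambda_r/(4\Gamma(1-\beta_r))$ (and $(6\sigma-1)$ at $n=1$) and I used $\sum_r\lambda_r\leq C$. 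By Lemma \ref{lemmamultiterm}, for $\tau\leq\tau_0$ the non-fast quantity $(2\sigma-1){\widehat g}_n^{(n+1)}-\sigma{\widehat g}_{n-1}^{(n+1)}$ is bounded below by a strictly positive quantity of size $\sim\tau^{-\beta_0}$, which dominates the fixed $C\epsilon$ once $\epsilon$ is chosen small enough (independent of $n$, since the lower bound only improves as $n$ grows — $(1+\sigma)^{-\beta_0}$ is the smallest such factor up to the $f_\sigma$ and $\tau$-dependent terms). For the remaining case $n=1$, the extra $-\tfrac12 b_1$ in ${\bf g}_0^{(2)}$ actually helps: $(2\sigma-1){\bf g}_1^{(2)}-\sigma{\bf g}_0^{(2)}=(2\sigma-1){}^{\cal F}{\widehat g}_1^{(2)}-\sigma\,{}^{\cal F}{\widehat g}_0^{(2)}+\tfrac{\sigma}{2}b_1$, and the first two terms are handled exactly as above while $\tfrac{\sigma}{2}b_1>0$ only reinforces positivity.

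The main obstacle I anticipate is making the $\epsilon$-threshold genuinely uniform in $n$ and $\tau$: Lemma \ref{lemmamultiterm} gives a lower bound that behaves like $\tau^{-\beta_0}$ times a bracket $[f_\sigma(1)+{\cal O}(\tau^{\beta_0-\beta_1})]$, so one must check that $f_\sigma(1)>0$ (this is where the constraint $\sigma\in[1-\beta_0/2,1-\beta_m/2]$ and the resulting range of $\sigma$ enter) and that the ${\cal O}(\tau^{\beta_0-\beta_1})$ correction does not spoil positivity for $\tau\leq\tau_0$, possibly shrinking $\tau_0$. Once that strictly positive, $n$-independent floor is secured, requiring $\epsilon$ smaller than a fixed fraction of it closes both \eqref{ineq1} and \eqref{ineq2} simultaneously; I would state this as the hypothesis "for sufficiently small $\epsilon$" already present in the lemma. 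The bookkeeping of the constants $(6\sigma-1)$ versus $(7\sigma-1)$ and the $\lambda_r/\Gamma(1-\beta_r)$ factors is routine and I would not belabor it.
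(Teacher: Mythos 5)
Your treatment of the main inequality \eqref{ineq2} is essentially the paper's own argument: expand $(2\sigma-1)\,{}^{\cal F}{\widehat g}_n^{(n+1)}-\sigma\,{}^{\cal F}{\widehat g}_{n-1}^{(n+1)}$ over the single-term coefficients, pass from fast to non-fast coefficients via the first inequality of Lemma \ref{lemmafast} at a cost of $C\epsilon$, invoke Lemma \ref{lemmamultiterm} for the strictly positive floor of size $\sim\tau^{-\beta_0}$, and treat $n=1$ separately, where the extra $\tfrac{\sigma}{2}b_1$ coming from ${\bf g}_0^{(2)}={}^{\cal F}{\widehat g}_0^{(2)}-\tfrac12 b_1$ only helps. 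That part is correct and matches the paper step for step (your coefficient $\tfrac{\sigma}{2}b_1$ is in fact the right one).

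There is, however, a genuine gap in your argument for the link ${\bf g}_1^{(n+1)}>{\bf g}_0^{(n+1)}$ in \eqref{ineq1}. From \eqref{proper-g2} and \eqref{bfg} the correct identity is ${\bf g}_1^{(n+1)}-{\bf g}_0^{(n+1)}={}^{\cal F}{\widehat g}_0^{(n)}-{}^{\cal F}{\widehat g}_0^{(n+1)}+\tfrac32 b_n$; you dropped the term $-{}^{\cal F}{\widehat g}_0^{(n+1)}$, and positivity of ${}^{\cal F}{\widehat g}_0^{(n)}$ and $b_n$ alone no longer closes the argument. The sign of ${}^{\cal F}{\widehat g}_0^{(n)}-{}^{\cal F}{\widehat g}_0^{(n+1)}$ is favorable for $n\ge2$ (exponential decay in $n$), but at $n=1$ the two quantities come from different formulas (${}^{\cal F}{\widehat g}_0^{(1)}=\sum_r\lambda_r\mu^{(\beta_r)}a_0^{(\beta_r)}$ versus $\sum_r\lambda_r\sum_j{\widehat\omega}_j^{(\beta_r)}A_j^{(\beta_r)}$) and the difference can be negative, so your route would require an extra estimate there. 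The paper sidesteps this entirely: Lemma \ref{lemmafast} already yields ${}^{\cal F}{\widehat g}_1^{(n+1)}>{}^{\cal F}{\widehat g}_0^{(n+1)}$, whence ${\bf g}_1^{(n+1)}={}^{\cal F}{\widehat g}_1^{(n+1)}>{}^{\cal F}{\widehat g}_0^{(n+1)}>{}^{\cal F}{\widehat g}_0^{(n+1)}-\tfrac12 b_n={\bf g}_0^{(n+1)}$, with ${\bf g}_0^{(n+1)}>0$ from Lemma \ref{coefficient-pro1}(a). Separately, you assert $B_j^{(\beta)}>0$ (hence $b_n>0$) as obvious; it is true, but the integrand $(s-\tfrac12)e^{-s_j^{(\beta)}\tau(\sigma+1-s)}$ changes sign, so it does require an argument — the paper spends a paragraph establishing $b_1>0$ (a symmetrization $s\mapsto 1-s$ using that the exponential weight is increasing gives it in one line). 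With these two repairs your proof coincides with the paper's.
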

\begin{proof}
From the relation \eqref{coe-relation} and Lemma \ref{lemmafast}, we easily get
$${\bf g}_n^{(n+1)}>{\bf g}_{n-1}^{(n+1)}>\cdots>{\bf g}_1^{(n+1)}>0.$$
Notice that
$${\bf g}_0^{(n+1)}={^{\cal F} {{\widehat g}_0^{(n+1)}}}-\frac12 b_n<{^{\cal F} {{\widehat g}_0^{(n+1)}}}
<{^{\cal F} {{\widehat g}_1^{(n+1)}}}={\bf g}_1^{(n+1)},$$
with property $(a)$ in Lemma \ref{coefficient-pro1},
\eqref{ineq1} is obtained.

For $n\geq2$,
then by Lemma \ref{lemmafast}, we have
\begin{align}\nonumber
(2\sigma-1){\bf g}_n^{(n+1)}-\sigma {\bf g}_{n-1}^{(n+1)}
=&(2\sigma-1)~{^{\cal F} {\widehat g_{n}^{(n+1)}}}-\sigma~{^{\cal F} {\widehat g_{n-1}^{(n+1)}}}\\\nonumber
\label{coeff}=&\dsum_{r=0}^m\lambda_r\left[(2\sigma-1)~{^{\cal F} { g_{n}^{(n+1,{\beta_r})}}}-\sigma~{^{\cal F} { g_{n-1}^{(n+1,{\beta_r})}}}\right]\\
\geq&\sum_{r=0}^m\lambda_r\left[(2\sigma-1) g_n^{(n+1,\beta_r)}-\sigma g_{n-1}^{(n+1,\beta_r)}\right]-\frac{7\sigma-1}4\epsilon\dsum_{r=0}^m\frac{\lambda_r}{\Gamma(1-\beta_r)}.
\end{align}
By Lemma \ref{lemmamultiterm}, we reach
\begin{align*}
\sum_{r=0}^m\lambda_r\left[(2\sigma-1)g_n^{(n+1,\beta_r)}-\sigma g_{n-1}^{(n+1,\beta_r)}\right]=&
(2\sigma-1){\widehat g}_n^{(n+1)}-\sigma {\widehat g}_{n-1}^{(n+1)}\\
> &\frac{\lambda_0\tau^{-\beta_0}}{2\Gamma(2-\beta_0)}(1+\sigma)^{-\beta_0}[f_\sigma(1)+{\cal O}(\tau^{\beta_0-\beta_1})]>0.
\end{align*}
Combining the above inequality with \eqref{coeff}, we can conclude that
$$(2\sigma-1){\bf g}_n^{(n+1)}-\sigma {\bf g}_{n-1}^{(n+1)}>0,\quad n\geq 2,$$
holds as long as
$$\epsilon<\left\{\frac12\lambda_0\frac{\tau^{-\beta_0}}{\Gamma(2-\beta_0)}(1+\sigma)^{-\beta_0}[f_\sigma(1)+{\cal O}(\tau^{\beta_0-\beta_1})]\right\}\Big/ \left[\frac{7\sigma-1}4\dsum_{r=0}^m\frac{\lambda_r}{\Gamma(1-\beta_r)}\right].$$
For $n=1$, using the similar way as $n\geq2$, we have
\begin{align}\label{coe-proof2}
(2\sigma-1){\bf g}_1^{(2)}-\sigma {\bf g}_{0}^{(2)}
\geq \frac{\lambda_0\tau^{-\beta_0}(1+\sigma)^{-\beta_0}}{2\Gamma(2-\beta_0)}[f_\sigma(1)+{\cal O}(\tau^{\beta_0-\beta_1})]-\frac{(6\sigma-1)\epsilon}4\dsum_{r=0}^m\frac{\lambda_r}{\Gamma(1-\beta_r)}+\sigma b_1.
\end{align}
Next we show $b_1>0$. Note that
$$b_1
=\sum_{r=0}^m\lambda_r\sum_{j=0}^{N^{(\beta_r)}}{\widehat w_j^{(\beta_r)}}e^{-s_j^{(\beta_r)}\tau(\sigma+1)}\int_0^1(s-\frac12)e^{s_j^{(\beta_r)}\tau s}ds.$$
For the integral term
\begin{align*}
\int_0^1(s-\frac12)e^{s_j^{(\beta_r)}\tau s}&=\int_0^{\frac12}(s-\frac12)e^{s_j^{(\beta_r)}\tau s}ds+\int_\frac12^1(s-\frac12)e^{s_j^{(\beta_r)}\tau s}ds\\
&=\int_0^{\frac12}(s-\frac12)e^{s_j^{(\beta_r)}\tau s}ds+\int_0^\frac12se^{s_j^{(\beta_r)}\tau(s+\frac12)}ds>\int_0^{\frac12}(2s-\frac12)e^{s_j^{(\beta_r)}\tau s}ds.
\end{align*}
Recursively, the above integral has the lower bound
\begin{align*}
\int_0^1(s-\frac12)e^{s_j^{(\beta_r)}\tau s}>2^{n-1}\int_0^{\frac1{2^{n-1}}}(s-\frac1{2^n})e^{s_j^{(\beta_r)}\tau s}ds.
\end{align*}
It is easy to check that
$$2^{n-1}\int_0^{\frac1{2^{n-1}}}(s-\frac1{2^n})e^{s_j^{(\beta_r)}\tau s}ds>0,\quad \mbox{as}\quad  n\rightarrow\infty,$$
hence $b_1>0$.

Therefore, from \eqref{coe-proof2},  it holds that $(2\sigma-1){\bf g}_1^{(2)}-\sigma {\bf g}_{0}^{(2)}>0$
when
$$\epsilon<\left\{\frac12\lambda_0\frac{\tau^{-\beta_0}}{\Gamma(2-\beta_0)}(1+\sigma)^{-\beta_0}[f_\sigma(1)+{\cal O}(\tau^{\beta_0-\beta_1})]\\
+\sigma b_1\right\}\Big/\left[\frac{6\sigma-1}4\dsum_{r=0}^m\frac{\lambda_r}{\Gamma(1-\beta_r)}\right].$$
Thus in general, \eqref{ineq2} holds if
\begin{align*}
\epsilon<\left\{\frac12\lambda_0\frac{\tau^{-\beta_0}}{\Gamma(2-\beta_0)}(1+\sigma)^{-\beta_0}[f_\sigma(1)+{\cal O}(\tau^{\beta_0-\beta_1})]\right\}\Big/ \left[\frac{7\sigma-1}4\dsum_{r=0}^m\frac{\lambda_r}{\Gamma(1-\beta_r)}\right].
\end{align*}
\end{proof}
In the following parts, we always suppose conditions in Lemma \ref{multi-coeff-property} are fulfilled when required. 
\begin{lemma}\label{alik}{\rm(\cite{AAA})}
If the positive sequence $\{d_k^{(n+1)}|~0\leq k\leq n,n\geq 1\}$ is strictly decrease for $k$, and satisfies $(2\sigma-1)d_n^{(n+1)}-\sigma d_{n-1}^{(n+1)}>0$ for a constant $\sigma\in(0,1)$ . Then
$$
2[\sigma y^{n+1}+(1-\sigma)y^n]\sum_{k=0}^{n}d_{k}^{(n+1)}(y^{k+1}-y^k) \geq \sum_{k=0}^{n}d_{k}^{(n+1)}\big[(y^{k+1})^2-(y^k)^2 \big],\quad n\geq 1.
$$
\end{lemma}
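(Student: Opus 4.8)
The statement to prove is Lemma~\ref{alik}, a discrete energy inequality of the form
$$2[\sigma y^{n+1}+(1-\sigma)y^n]\sum_{k=0}^{n}d_{k}^{(n+1)}(y^{k+1}-y^k) \geq \sum_{k=0}^{n}d_{k}^{(n+1)}\big[(y^{k+1})^2-(y^k)^2 \big]$$
under the hypotheses that $\{d_k^{(n+1)}\}$ is strictly decreasing in $k$ and $(2\sigma-1)d_n^{(n+1)}-\sigma d_{n-1}^{(n+1)}>0$.

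\textbf{Approach.} The plan is to reduce the inequality to a quadratic form in the variables $y^0,\dots,y^{n+1}$ and show the associated symmetric matrix is positive semidefinite. First I would move everything to one side and write $G := 2[\sigma y^{n+1}+(1-\sigma)y^n]\sum_{k=0}^{n}d_{k}^{(n+1)}(y^{k+1}-y^k) - \sum_{k=0}^{n}d_{k}^{(n+1)}[(y^{k+1})^2-(y^k)^2]$; the goal is $G\geq 0$. Using the elementary identity $2a(b-c) - (b^2 - c^2) = -(a-b)^2 + (a-c)^2$ together with the telescoping structure of $\sum_k d_k^{(n+1)}(y^{k+1}-y^k)$, one rewrites $G$ after Abel summation. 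A cleaner route (the one I expect to work best) is to set $w = \sigma y^{n+1} + (1-\sigma)y^n$ and use $2w\,(y^{k+1}-y^k) \geq (y^{k+1})^2 - (y^k)^2 - (y^{k+1} - w)^2 + (y^k - w)^2$ — actually an equality — so that
$$G = \sum_{k=0}^n d_k^{(n+1)}\big[(y^k - w)^2 - (y^{k+1}-w)^2\big].$$
Then apply Abel summation (summation by parts) to this telescoping-like sum: since $d_k^{(n+1)}$ is decreasing, $\sum_{k=0}^n d_k^{(n+1)}[(y^k-w)^2 - (y^{k+1}-w)^2]$ collapses to $d_0^{(n+1)}(y^0-w)^2 - d_n^{(n+1)}(y^{n+1}-w)^2 + \sum_{k=1}^{n}(d_k^{(n+1)} - d_{k-1}^{(n+1)})(y^k - w)^2$. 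The middle sum is $\leq 0$ termwise by monotonicity? No — that has the wrong sign, so instead I would regroup to keep the differences $d_{k-1}^{(n+1)} - d_k^{(n+1)} > 0$ as positive coefficients on nonnegative squares, which forces the telescoping to be organized around $w$ being ``between'' the later iterates.

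\textbf{Key steps in order.} (1) Substitute $w = \sigma y^{n+1}+(1-\sigma)y^n$ and verify the algebraic identity that turns $G$ into the telescoping sum $\sum_{k=0}^n d_k^{(n+1)}[(y^k-w)^2-(y^{k+1}-w)^2]$. (2) Perform Abel summation on that sum to get $d_n^{(n+1)}(y^n - w)^2 - d_n^{(n+1)}(y^{n+1}-w)^2$ plus $\sum_{k=1}^{n}(d_{k-1}^{(n+1)}-d_k^{(n+1)})(y^k-w)^2 \geq 0$ contributions... (3) Handle the two ``boundary'' terms involving $y^n$, $y^{n+1}$, and $w=\sigma y^{n+1}+(1-\sigma)y^n$: here $y^n - w = \sigma(y^n - y^{n+1})$ and $y^{n+1} - w = (1-\sigma)(y^{n+1}-y^n)$, so the net boundary contribution is $d_n^{(n+1)}[\sigma^2 - (1-\sigma)^2](y^{n+1}-y^n)^2 = d_n^{(n+1)}(2\sigma-1)(y^{n+1}-y^n)^2$ — wait, one must also pick up a $d_{n-1}^{(n+1)}$ or $d_n^{(n+1)}$ cross-term from step (2); this is precisely where the hypothesis $(2\sigma-1)d_n^{(n+1)} - \sigma d_{n-1}^{(n+1)} > 0$ enters, to dominate a leftover negative cross term of the form $-\sigma d_{n-1}^{(n+1)}(\dots)^2$. (4) Collect: nonnegative $\sum(d_{k-1}^{(n+1)}-d_k^{(n+1)})(\cdots)^2$ terms from monotonicity, plus the boundary term made nonnegative by the second hypothesis, plus a nonnegative $d_0^{(n+1)}(y^0 - w)^2$ leftover.

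\textbf{Main obstacle.} The delicate point is the bookkeeping at the two right endpoints $k=n-1,n$: after Abel summation, the natural grouping does not immediately show the coefficient of $(y^{n+1}-y^n)^2$ is nonnegative — one gets a combination like $(2\sigma-1)d_n^{(n+1)} - \sigma d_{n-1}^{(n+1)}$ (together with some genuinely nonnegative leftover), and matching the algebra so that exactly this quantity appears (rather than some other, possibly sign-indefinite, combination of the $d_k$) is the crux. I expect to need to choose the Abel-summation regrouping carefully — possibly splitting off the $k=n$ term before summing by parts on $k=0,\dots,n-1$ — so that the residual quadratic form is manifestly a sum of a nonnegative combination (from strict decrease of $d_k^{(n+1)}$) and one term carrying the factor $(2\sigma-1)d_n^{(n+1)}-\sigma d_{n-1}^{(n+1)}$, which is positive by hypothesis. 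Everything else is routine completion-of-squares; once the identity in step (1) and the endpoint grouping in step (3) are pinned down, nonnegativity follows immediately.
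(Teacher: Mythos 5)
The paper never proves this lemma --- it is imported verbatim from Alikhanov \cite{AAA} --- so there is no internal proof to compare against, and your proposal has to stand on its own. It essentially does. Your step (1) identity is exact: with $w=\sigma y^{n+1}+(1-\sigma)y^n$ one has $2w(y^{k+1}-y^k)-[(y^{k+1})^2-(y^k)^2]=(y^k-w)^2-(y^{k+1}-w)^2$, and Abel summation applied to your telescoping form of $G$ gives
$$G=d_0^{(n+1)}(y^0-w)^2+\sum_{k=1}^{n}\big(d_k^{(n+1)}-d_{k-1}^{(n+1)}\big)(y^k-w)^2-d_n^{(n+1)}(y^{n+1}-w)^2.$$
Two points you left hanging need to be closed. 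First, the monotonicity direction: the lemma's phrase ``strictly decrease for $k$'' must be read as $d_n^{(n+1)}>d_{n-1}^{(n+1)}>\cdots>d_0^{(n+1)}$; this is how the lemma is actually invoked in Lemma \ref{stproof1} (where ${\bf g}_n^{(n+1)}>\cdots>{\bf g}_0^{(n+1)}>0$), and it is forced anyway by the hypothesis $(2\sigma-1)d_n^{(n+1)}-\sigma d_{n-1}^{(n+1)}>0$, which requires $\sigma>\frac12$ and $d_n^{(n+1)}>\frac{\sigma}{2\sigma-1}\,d_{n-1}^{(n+1)}>d_{n-1}^{(n+1)}$. With that reading every difference $d_k^{(n+1)}-d_{k-1}^{(n+1)}$ is positive, your ``wrong sign'' worry evaporates, and no regrouping is needed. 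Second, the endpoint bookkeeping you correctly identified as the crux closes cleanly: since $y^n-w=\sigma(y^n-y^{n+1})$ and $y^{n+1}-w=(1-\sigma)(y^{n+1}-y^n)$, the $k=n$ summand and the boundary term combine into $\big[\sigma^2\big(d_n^{(n+1)}-d_{n-1}^{(n+1)}\big)-(1-\sigma)^2d_n^{(n+1)}\big](y^{n+1}-y^n)^2=\big[(2\sigma-1)d_n^{(n+1)}-\sigma^2 d_{n-1}^{(n+1)}\big](y^{n+1}-y^n)^2$, and since $\sigma^2<\sigma$ this coefficient dominates $(2\sigma-1)d_n^{(n+1)}-\sigma d_{n-1}^{(n+1)}>0$ (so the stated hypothesis is in fact slightly stronger than what the argument needs). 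All remaining terms are nonnegative, whence $G\ge0$ and the proof is complete.
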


We now present a particular form of Lemma \ref{alik}, which will be used in our analysis.
\begin{lemma}\label{stproof1}
 For any real sequence $F^n$, the following inequality holds:
\begin{align*}
&2[\sigma {\widehat v}^{n+1-\sigma}+(1-\sigma){\widehat v}^{n-\sigma}][{^{\cal FH}}{\cal {\widehat D}}_t^\alpha u^{n+1}-F^n]\\
\geq &\sum_{k=0}^n {\bf g}_{k}^{(n+1)} {({\widehat v}^{k+1-\sigma})}^2-\sum_{k=0}^{n-1} {\bf g}_{k}^{(n)}{({\widehat v}^{k+1-\sigma})}^2-\left({\bf g}_1^{(n+1)}-{\bf g}_0^{(n)} \right){({\widehat v}^{1-\sigma})}^2
     -{\bf g}_0^{(n+1)}\Big({\widehat v}^{-\sigma}+\frac1{{\bf g}_0^{(n+1)}}F^n \Big)^2.
\end{align*}
\end{lemma}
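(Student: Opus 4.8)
The plan is to recognize the asserted inequality as an application of Lemma~\ref{alik} to a slightly modified sequence, after which the right‑hand side is reshaped into the stated form by an exact algebraic identity driven by the shift property \eqref{proper-g1}.

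First I would absorb $F^n$ into the first summand of \eqref{full-appro}. Define the auxiliary sequence $z^0={\widehat v}^{-\sigma}+\frac1{{\bf g}_0^{(n+1)}}F^n$ and $z^k={\widehat v}^{k-\sigma}$ for $1\le k\le n+1$. Since only the $k=0$ term of \eqref{full-appro} involves ${\widehat v}^{-\sigma}$, one checks at once that
$${^{\cal FH}}{\cal {\widehat D}}_t^\alpha u^{n+1}-F^n=\sum_{k=0}^n{\bf g}_k^{(n+1)}\big(z^{k+1}-z^k\big),$$
and, because $n\ge1$ so that the indices $n,n+1$ never touch $z^0$, also $\sigma z^{n+1}+(1-\sigma)z^n=\sigma{\widehat v}^{n+1-\sigma}+(1-\sigma){\widehat v}^{n-\sigma}$. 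Then I would apply Lemma~\ref{alik} with $d_k^{(n+1)}={\bf g}_k^{(n+1)}$, whose hypotheses are exactly \eqref{ineq1} and \eqref{ineq2} of Lemma~\ref{multi-coeff-property}, valid for $n\ge1$ once $\tau$ and $\epsilon$ are small enough. This gives
$$2[\sigma{\widehat v}^{n+1-\sigma}+(1-\sigma){\widehat v}^{n-\sigma}]\big[{^{\cal FH}}{\cal {\widehat D}}_t^\alpha u^{n+1}-F^n\big]\ \ge\ \sum_{k=0}^n{\bf g}_k^{(n+1)}\big[(z^{k+1})^2-(z^k)^2\big].$$

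Second, I would rewrite the right‑hand side. Peeling off the $k=0$ term, which yields ${\bf g}_0^{(n+1)}({\widehat v}^{1-\sigma})^2-{\bf g}_0^{(n+1)}\big({\widehat v}^{-\sigma}+\frac1{{\bf g}_0^{(n+1)}}F^n\big)^2$, and re‑indexing $\sum_{k=1}^{n}{\bf g}_k^{(n+1)}({\widehat v}^{k-\sigma})^2$, the sum becomes
$$\sum_{k=0}^n{\bf g}_k^{(n+1)}({\widehat v}^{k+1-\sigma})^2-\sum_{j=0}^{n-1}{\bf g}_{j+1}^{(n+1)}({\widehat v}^{j+1-\sigma})^2-{\bf g}_0^{(n+1)}\Big({\widehat v}^{-\sigma}+\frac1{{\bf g}_0^{(n+1)}}F^n\Big)^2.$$
Then I would invoke the shift property \eqref{proper-g1}, which together with the definition \eqref{bfg} gives the clean identity ${\bf g}_{k+1}^{(n+1)}={\bf g}_k^{(n)}$ for $1\le k\le n-1$. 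Hence $\sum_{j=0}^{n-1}{\bf g}_{j+1}^{(n+1)}({\widehat v}^{j+1-\sigma})^2$ and $\sum_{k=0}^{n-1}{\bf g}_k^{(n)}({\widehat v}^{k+1-\sigma})^2$ coincide term by term for $j\ge1$, differing only in the $j=0$ entry by $({\bf g}_1^{(n+1)}-{\bf g}_0^{(n)})({\widehat v}^{1-\sigma})^2$; substituting this identification back reproduces precisely the right‑hand side in the statement.

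The only delicate point is bookkeeping with the index ranges: \eqref{proper-g1} is valid only for $2\le k\le n$, so the term‑by‑term matching above is legitimate only for $j\ge1$, and the residual $j=0$ gap is exactly what the correction term $({\bf g}_1^{(n+1)}-{\bf g}_0^{(n)})({\widehat v}^{1-\sigma})^2$ compensates; the degenerate case $n=1$, in which the middle sums are empty, should be checked separately but is immediate. Apart from Lemma~\ref{alik} no inequality enters—every other step is an exact equality—so there is no genuine analytic obstacle, only careful index accounting.
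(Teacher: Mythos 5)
Your proposal is correct and follows essentially the same route as the paper: apply Lemma \ref{alik} with $d_k^{(n+1)}={\bf g}_k^{(n+1)}$ and the zeroth entry shifted to ${\widehat v}^{-\sigma}+\frac1{{\bf g}_0^{(n+1)}}F^n$ (which absorbs $F^n$ into the $k=0$ difference), then re-index the resulting telescoping sum via the shift identities \eqref{proper-g1}--\eqref{proper-g2}, with the $k=1$ mismatch producing exactly the correction term $({\bf g}_1^{(n+1)}-{\bf g}_0^{(n)})({\widehat v}^{1-\sigma})^2$. Your explicit bookkeeping of the index ranges and of the degenerate case $n=1$ is a slightly more careful rendering of what the paper does implicitly.
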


\begin{proof}
By Lemma \ref{multi-coeff-property}, incorporating $d_k^{(n+1)}={\bf g}_k^{(n+1)}$, $y^n={\widehat v}^{n-\sigma}$ $(n\geq 1)$ and $y^0={\widehat v}^{-\sigma}+\frac1{{\bf g}_0^{(n+1)}}F^n$ with Lemma \ref{alik}, and noticing the properties \eqref{proper-g1}--\eqref{proper-g2}, we can get
\begin{align*}
&2[\sigma {\widehat v}^{n+1-\sigma}+(1-\sigma){\widehat v}^{n-\sigma}]({^{\cal FH}}{\cal {\widehat D}}_t^\alpha u^{n+1}-F^n)\\
\geq& \sum_{k=1}^n {\bf g}_{k}^{(n+1)} \big[ {({\widehat v}^{k+1-\sigma})}^2-{({\widehat v}^{k-\sigma})}^2\big]+{\bf g}_0^{(n+1)} \Big[{({\widehat v}^{1-\sigma})}^2-\Big({\widehat v}^{-\sigma}+\frac1{{\bf g}_0^{(n+1)}}F^n \Big)^2\Big] \\
=& \sum_{k=0}^n {\bf g}_{k}^{(n+1)} {({\widehat v}^{k+1-\sigma})}^2-\sum_{k=1}^n {\bf g}_{k}^{(n+1)}{({\widehat v}^{k-\sigma})}^2 -{\bf g}_0^{(n+1)}\Big({\widehat v}^{-\sigma}+\frac1{{\bf g}_0^{(n+1)}}F^n \Big)^2\\
=& \sum_{k=0}^n {\bf g}_{k}^{(n+1)} {({\widehat v}^{k+1-\sigma})}^2-\sum_{k=1}^n {\bf g}_{k-1}^{(n)}{({\widehat v}^{k-\sigma})}^2-\left({\bf g}_1^{(n+1)}-  {\bf g}_0^{(n)}  \right){({\widehat v}^{1-\sigma})}^2
     -{\bf g}_0^{(n+1)}\Big({\widehat v}^{-\sigma}+\frac1{{\bf g}_0^{(n+1)}}F^n \Big)^2\\
=& \sum_{k=0}^n {\bf g}_{k}^{(n+1)} {({\widehat v}^{k+1-\sigma})}^2-\sum_{k=0}^{n-1} {\bf g}_{k}^{(n)}{({\widehat v}^{k+1-\sigma})}^2-\left({\bf g}_1^{(n+1)}-{\bf g}_0^{(n)} \right){({\widehat v}^{1-\sigma})}^2
     -{\bf g}_0^{(n+1)}\Big({\widehat v}^{-\sigma}+\frac1{{\bf g}_0^{(n+1)}}F^n \Big)^2.
\end{align*}
\end{proof}
\begin{lemma}\label{GRW} {\rm{(\cite{Gronwall})}}{\rm(}Gronwall's inequality{\rm)}
Let $\{G_{{j}}\}$ and $\{k_{{j}}\}$ be nonnegative sequences satisfying
$$G_0\leq K,\qquad G_{{j}} \leq K+\sum_{l=0}^{{j}-1}k_l G_l,\quad {j}\geq 1, $$
where $K\geq 0$. Then
$$G_{{j}}\leq K\exp\Big(\sum_{l=0}^{{j}-1}k_l \Big),\quad {j}\geq 1. $$
\end{lemma}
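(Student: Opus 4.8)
The plan is to argue by (strong) induction on $j$, the engine being the elementary inequality $1+x\le e^x$, which for $x=k_l\ge0$ converts the linear recursion into a telescoping sum.

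First I would dispose of the base case: for $j=0$ the assumption $G_0\le K$ is precisely the desired bound $G_0\le K\exp\big(\sum_{l=0}^{-1}k_l\big)$ once one adopts the convention that an empty sum equals $0$. For the inductive step, suppose $G_l\le K\exp\big(\sum_{i=0}^{l-1}k_i\big)$ holds for all $0\le l\le j-1$ (this is where the hypothesis $G_0\le K$ enters, as the $l=0$ instance). Substituting these bounds into $G_j\le K+\sum_{l=0}^{j-1}k_lG_l$ gives
$$G_j\le K\Big(1+\sum_{l=0}^{j-1}k_l\exp\Big(\sum_{i=0}^{l-1}k_i\Big)\Big).$$

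It then remains to show that the bracket is at most $\exp\big(\sum_{l=0}^{j-1}k_l\big)$. Here I would use $k_l\le e^{k_l}-1$ (a restatement of $1+x\le e^x$) to estimate each summand by a difference,
$$k_l\exp\Big(\sum_{i=0}^{l-1}k_i\Big)\le\exp\Big(\sum_{i=0}^{l}k_i\Big)-\exp\Big(\sum_{i=0}^{l-1}k_i\Big),$$
so that the sum over $0\le l\le j-1$ telescopes to $\exp\big(\sum_{i=0}^{j-1}k_i\big)-1$; adding the leading $1$ yields exactly $\exp\big(\sum_{l=0}^{j-1}k_l\big)$. Combining this with the previous display closes the induction.

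No genuine difficulty arises; the only point to watch is the index bookkeeping (empty sums, and the one-step shift between $\sum_{i=0}^{l}$ and $\sum_{i=0}^{l-1}$) so that the telescoping is exact. Since this is the classical discrete Gronwall lemma, one may alternatively just invoke \cite{Gronwall}; the self-contained argument above is recorded for completeness.
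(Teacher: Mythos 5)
Your argument is correct. Note that the paper itself supplies no proof of this lemma at all: it is quoted verbatim from the reference \cite{Gronwall} (Quarteroni--Valli) as a standard tool, so there is nothing in the paper to compare against step by step. Your self-contained induction is the classical one and checks out: the base case is the hypothesis $G_0\le K$ with the empty-sum convention; the inductive substitution uses $K\ge0$ and $k_l\ge0$ legitimately to pull $K$ out; and the estimate $k_l\le e^{k_l}-1$ turns each summand into the exact difference $\exp\big(\sum_{i=0}^{l}k_i\big)-\exp\big(\sum_{i=0}^{l-1}k_i\big)$, so the telescoping yields $\exp\big(\sum_{i=0}^{j-1}k_i\big)-1$ and the bracket closes to $\exp\big(\sum_{l=0}^{j-1}k_l\big)$ as claimed. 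The index bookkeeping you flag is handled consistently. Either invoking the citation, as the authors do, or recording your short proof is acceptable.
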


Another important lemma concerns some properties about the coefficients for analysis:
\begin{lemma}\label{coeff-property2}
The coefficients ${\bf g}_0^{(k+1)}~(0\leq k\leq n)$ and ${\bf g}_k^{(n+1)}$  satisfy
\begin{align*}
&(a)\quad C\tau^{1-\alpha_m}\leq {\bf g}_0^{(1)}\leq C\tau^{1-\alpha_0}; ~~\quad\quad
   (b) \quad \tau \sum_{k=0}^{n}{\bf g}_0^{(k+1)}<C\max_{1\leq \gamma\leq2}t_{n}^{2-\gamma};\\
 &(c) \quad \tau\sum_{k=0}^n \frac1{{\bf g}_0^{(k+1)}}< C\max_{1\leq \gamma\leq2}t_{n+\sigma}^{\gamma} ;\quad\quad
    (d) \quad \tau\sum_{k=0}^n\frac{1}{{\bf g}_k^{(n+1)}}\leq Ct_{n+\sigma}^{2-\alpha_{i_1}}.
\end{align*}
\end{lemma}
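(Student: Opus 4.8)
My overall strategy is to reduce each of the four estimates to the asymptotic behaviour of the refined coefficients, which was already essentially computed in Lemma \ref{coefficient-pro1}. The key input is the two-sided bound $C t_{n+\sigma}^{1-\alpha_{i_0}}\le {\bf g}_0^{(n+1)}\le C t_{n-1+\sigma}^{1-\alpha_{i_1}}$ with exponents strictly between $1-\alpha_0$ and $1-\alpha_m$, together with the definition ${\bf g}_0^{(1)}=\frac{{}^{\cal F}{\widehat g}_0^{(1)}}{1-\sigma}$ and the identity ${}^{\cal F}{\widehat g}_0^{(1)}=\sum_{r=0}^m\lambda_r\mu^{(\beta_r)}a_0^{(\beta_r)}=\sigma\sum_{r=0}^m\frac{\lambda_r}{\Gamma(2-\beta_r)}(\sigma\tau)^{-\beta_r}$ from \eqref{eqq01}. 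For (a), I would simply bound this finite sum from above and below: since $\tau\le\tau_0$ is small, the dominant term as $\tau\to 0$ is the one with the largest $\beta_r$, namely $\beta_m$, giving the lower bound $C\tau^{-\beta_m}=C\tau^{1-\alpha_m}$; and each term is at most $C\tau^{-\beta_0}=C\tau^{1-\alpha_0}$, giving the upper bound. (One must be slightly careful that $\tau$ is bounded, so $\tau^{-\beta_r}\le C\tau^{-\beta_0}$ for all $r$ on $[0,\tau_0]$.)

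For (b), I would split the sum: the $k=0$ term contributes $\tau\,{\bf g}_0^{(1)}\le C\tau^{1-\alpha_0}\le C$ by part (a), and for $k\ge 1$ I use the upper bound ${\bf g}_0^{(k+1)}\le C t_{k-1+\sigma}^{1-\alpha_{i_1}}$ from Lemma \ref{coefficient-pro1}(a). Then $\tau\sum_{k=1}^{n}t_{k-1+\sigma}^{1-\alpha_{i_1}}\le C\tau^{2-\alpha_{i_1}}\sum_{k=1}^{n}(k-1+\sigma)^{1-\alpha_{i_1}}$, which is comparable to the Riemann integral $\int_0^{n}x^{1-\alpha_{i_1}}dx\le C n^{2-\alpha_{i_1}}$ because the exponent $1-\alpha_{i_1}\in(-1,0)$; hence the bound $C t_n^{2-\alpha_{i_1}}\le C\max_{1\le\gamma\le 2}t_n^{2-\gamma}$. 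Part (c) is entirely analogous but with reciprocals: from the lower bound ${\bf g}_0^{(k+1)}\ge C t_{k+\sigma}^{1-\alpha_{i_0}}$ we get $\frac1{{\bf g}_0^{(k+1)}}\le C t_{k+\sigma}^{\alpha_{i_0}-1}$, and since $\alpha_{i_0}-1\in(0,1)$ the sum $\tau\sum_{k=0}^{n}t_{k+\sigma}^{\alpha_{i_0}-1}$ is again a Riemann sum for $\int_0^{t_{n+\sigma}}x^{\alpha_{i_0}-1}dx\le C t_{n+\sigma}^{\alpha_{i_0}}\le C\max_{1\le\gamma\le 2}t_{n+\sigma}^{\gamma}$; here the $k=0$ term $\tau/{\bf g}_0^{(1)}\le C\tau^{\alpha_m}$ is harmless.

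Part (d) is the one I expect to cause the most trouble, since it mixes two indices: we need $\tau\sum_{k=0}^{n}\frac1{{\bf g}_k^{(n+1)}}\le C t_{n+\sigma}^{2-\alpha_{i_1}}$, and the only coefficient with a convenient \emph{lower} bound so far is ${\bf g}_0^{(n+1)}$ (via part (a) of Lemma \ref{coefficient-pro1}), not the intermediate ${\bf g}_k^{(n+1)}$ for $1\le k\le n$. My plan is to exploit monotonicity: by Lemma \ref{multi-coeff-property} we have ${\bf g}_n^{(n+1)}>\cdots>{\bf g}_1^{(n+1)}>{\bf g}_0^{(n+1)}$, and combining \eqref{proper-g1} with the fact that ${\bf g}_k^{(n+1)}={}^{\cal F}{\widehat g}_k^{(n+1)}$ for $k\ge1$ we have ${\bf g}_k^{(n+1)}={}^{\cal F}{\widehat g}_{k-1}^{(n)}={}^{\cal F}{\widehat g}_{k-2}^{(n-1)}=\cdots={}^{\cal F}{\widehat g}_0^{(n-k+1)}$ for $2\le k\le n$, so that $\frac1{{\bf g}_k^{(n+1)}}={1}/{{}^{\cal F}{\widehat g}_0^{(n-k+1)}}$. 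Since ${}^{\cal F}{\widehat g}_0^{(m)}\ge C t_{m-1+\sigma}^{1-\alpha_{i_0}}$ (this is exactly \eqref{g0n12}, the lower bound used in Lemma \ref{coefficient-pro1}(a)), reindexing $j=n-k+1$ turns the tail of the sum into $\tau\sum_{j}t_{j-1+\sigma}^{\alpha_{i_0}-1}$, which by the same Riemann-sum argument as in (c) is $\le C t_{n+\sigma}^{\alpha_{i_0}}$; the remaining terms $k=0,1$ contribute $\tau\big(\frac1{{\bf g}_0^{(n+1)}}+\frac1{{\bf g}_1^{(n+1)}}\big)\le C\tau\cdot t_{n+\sigma}^{\alpha_{i_0}-1}$ by part (a) of Lemma \ref{coefficient-pro1}, which is absorbed. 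Finally I would observe that $t_{n+\sigma}^{\alpha_{i_0}}\le C t_{n+\sigma}^{2-\alpha_{i_1}}$ on $[0,T]$ after possibly enlarging the interval $(\alpha_m,\alpha_0)$ containing $\alpha_{i_1}$ so that $2-\alpha_{i_1}\ge\alpha_{i_0}$ (both exponents lie in the controllable range, and any positive power of $t$ is bounded by any smaller positive power times a constant depending on $T$). The delicate point throughout is keeping track of which of the exponents $\alpha_{i_0},\alpha_{i_1}\in(\alpha_m,\alpha_0)$ appears where, and ensuring the $O(\epsilon)$ remainders from the SOE approximation in \eqref{g0n12} do not destroy the lower bounds — but for $\epsilon$ sufficiently small relative to $\tau$ (as already assumed after Lemma \ref{multi-coeff-property}) they are dominated by the leading $t^{1-\alpha_r}$ terms.
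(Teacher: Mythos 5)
Your parts (a)--(c) coincide with the paper's own proof: (a) reads the two-sided bound off the explicit formula ${}^{\cal F}{\widehat g}_0^{(1)}=\sum_{r}\lambda_r\mu^{(\beta_r)}a_0^{(\beta_r)}$, and (b), (c) combine Lemma \ref{coefficient-pro1}(a) with the same Riemann-sum estimates after splitting off the $k=0$ term. The only genuine divergence is in part (d), where you work harder than the paper does: the paper simply invokes the monotonicity ${\bf g}_k^{(n+1)}>{\bf g}_0^{(n+1)}$ from Lemma \ref{multi-coeff-property} and bounds all $n+1$ reciprocals by the single quantity $1/{\bf g}_0^{(n+1)}\leq Ct_{n+\sigma}^{\alpha_{i_1}-1}$, so that $\tau\sum_{k=0}^{n}1/{\bf g}_k^{(n+1)}\leq C(n+1)\tau\, t_{n+\sigma}^{\alpha_{i_1}-1}\leq Ct_{n+\sigma}^{\alpha_{i_1}}$, with no shift identities or reindexing. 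Your route also works and is in principle sharper (you sum the genuinely different lower bounds rather than replacing them all by the worst one), but two details need fixing: the telescoping via \eqref{proper-g1} is only valid while the lower index stays $\geq 2$, so it terminates at ${}^{\cal F}{\widehat g}_1^{(n-k+2)}$, and the last step must use \eqref{proper-g2}, giving ${\bf g}_k^{(n+1)}={}^{\cal F}{\widehat g}_0^{(n-k+1)}+b_{n-k+1}\geq {}^{\cal F}{\widehat g}_0^{(n-k+1)}$ rather than the equality you assert (harmless, since only a lower bound is needed and $b_{n-k+1}>0$). Two further cosmetic slips: in (a) the largest exponent is $\beta_0$, not $\beta_m$ --- the lower bound $C\tau^{-\beta_m}$ comes from keeping the $r=m$ term of a positive sum, not from dominance --- and the $k=0$ contribution in (b) is $\tau{\bf g}_0^{(1)}\leq C\tau^{2-\alpha_0}$, not $C\tau^{1-\alpha_0}$. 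Your closing observation that $t^{\alpha_{i_0}}\leq Ct^{2-\alpha_{i_1}}$ on $[0,T]$ because a larger positive power is dominated by a smaller one on a bounded interval is correct and is actually needed to reconcile the exponent produced by the computation with the exponent $2-\alpha_{i_1}$ in the statement (a step the paper leaves implicit), although your parenthetical requirement ``$2-\alpha_{i_1}\geq\alpha_{i_0}$'' has the inequality reversed relative to the principle you then correctly state.
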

\begin{proof}
Since  ${\bf g}_0^{(1)}=\frac{^{\cal F}{\widehat g}_0^{(1)}}{1-\sigma}$ and
$${^{\cal F}{\widehat g}_0^{(1)}}=
\sum_{r=0}^m\lambda_r \mu^{(\beta_r)}a_0^{(\beta_r)}= {\cal O}(\tau^{-\beta_*}),\quad
\beta_*\in (\beta_m,\beta_0),$$
then $(a)$ can be easily obtained.

By Lemma \ref{coefficient-pro1} $(a)$, we have
\begin{align*}
\tau\sum_{k=1}^n {\bf g}_0^{(k+1)}  \leq  C\tau\sum_{k=1}^{n} t_{k-1+\sigma}^{1-\alpha_{i_2}}
\leq C\tau^{2-\alpha_{i_2}}\sum_{k=0}^{n-1}(k+\sigma)^{1-\alpha_{i_2}}
\leq C\tau^{2-\alpha_{i_2}}\int_0^{n}x^{1-\alpha_{i_2}}dx
\leq Ct_{n}^{2-\alpha_{i_2}},
\end{align*}
uniting $\tau{\bf g}_0^{(1)} \leq C\tau^{2-\alpha_0}$, $(b)$ is verified .

Applying Lemma \ref{coefficient-pro1} $(a)$,
\begin{align*}
\tau\sum_{k=1}^n\frac1{{\bf g}_0^{(k+1)}} \leq  C\tau\sum_{k=1}^{n} t_{k+\sigma}^{\alpha_{i_1}-1}
\leq C\tau^{\alpha_{i_1}}\sum_{k=1}^{n}(k+\sigma)^{\alpha_{i_1}-1}
\leq C\tau^{\alpha_{i_1}}\int_0^{n+\sigma}x^{\alpha_{i_1}-1}dx
\leq Ct_{n+\sigma}^{\alpha_{i_1}},
\end{align*}
so $(c)$ can be obtained by combining $(a)$.

From Lemma \ref{multi-coeff-property}, we know that ${\bf g}_k^{(n+1)}>{\bf g}_0^{(n+1)}$ for $1\leq k\leq n$. Then applying Lemma \ref{coefficient-pro1} $(a)$ again, we get
\begin{align*}
\tau\sum_{k=0}^n\frac1{{\bf g}_k^{(n+1)}} \leq \tau\sum_{k=0}^n\frac1{{\bf g}_0^{(n+1)}}\leq  C\tau\sum_{k=0}^{n} t_{n+\sigma}^{\alpha_{i_1}-1}
\leq C\tau^{\alpha_{i_1}}\sum_{k=0}^{n}(n+\sigma)^{\alpha_{i_1}-1}
\leq C\tau^{\alpha_{i_1}}(n+\sigma)^{\alpha_{i_1}}
= Ct_{n+\sigma}^{\alpha_{i_1}},
\end{align*}
 so (d) is proved.
\end{proof}

\subsection{The unconditional convergence}

For two mesh functions $v_h, w_h\in \mathcal{V}_{h} $, we define the inner product and norms (the discrete $L_2$-norm and a semi-norm)
$$\langle v,w\rangle=h\sum_{i=1}^{M-1}v_iw_i,\quad \|v\|=\sqrt{\langle v,v\rangle},\quad |v|_1=\|\delta_x v\|.$$
Furthermore, we introduce the discrete $H^1$-norm
$\|v\|_{H^1}=\sqrt{\|v\|^2+|v|_1^2}.$\\
Referring to \cite{Liao-NMPDE2015,Sun-Book}, we know that
$\|v\|\leq (x_R-x_L)/{\sqrt{6}}|v|_1$, then
$$\|v\|_{H^1}\leq\sqrt{1+\frac{(x_R-x_L)^2}{6}}|v|_1.$$
Now we denote the errors $e_i^n=u(x_i,t_n)-u_i^n$, $i\in {\bar I}$ and $0\leq n\leq N$,
and denote
\begin{align}\notag
&{^{\cal FH}}{\cal {\widehat D}}_t^\alpha e_i^{n+1}=\sum_{k=0}^n {\bf g}_{k}^{(n+1)}({\widehat e}_i^{k+1-\sigma}-{\widehat e}_i^{k-\sigma}),\\\label{error-discret-1}
&{^{\cal FH}}{\cal {\widehat D}}_t^\alpha e_i^{1}={\bf g}_0^{(1)}{\widehat e}_i^{1-\sigma},
\end{align}
in which
\begin{align}\notag
& {\widehat e}_i^{k+1-\sigma}=(2-2\sigma)\delta_te_i^{k+\frac12}+(2\sigma-1)\delta_{\hat t}e_i^k,\quad 1\leq k\leq n,\\\label{e1sigma}
&{\widehat e}_i^{1-\sigma}=(2-2\sigma)\frac{e_i^{1}}{\tau}, \quad
 {\widehat e}_i^{-\sigma}= {\tilde b_n}{\widehat e}_i^{1-\sigma}.
\end{align}
 Take
\begin{align}\notag
{\widehat w}_i^k=&(\frac32-\theta)\big[\theta e_i^{k}+(1-\theta)e_i^{k-1}\big]+(\theta-\frac12)\big[\theta
e_i^{k-1}+(1-\theta)e_i^{k-2}\big],\quad k\geq 2,\\\label{for-w1}
{\widehat w}_i^1=&[(\frac32-\theta)\theta+(\theta-\frac12)(1-\theta)] e_i^{1}.
\end{align}
From subsection \ref{scheme-section}, we easily obtain the following error system:
\begin{align}\label{err1}
&{^{\cal FH}}{\cal {\widehat D}}_t^\alpha e_i^{n+1}=\delta_x^2\Big(\frac{{\widehat w}_i^{n+1}+{\widehat w}_i^n}{2}\Big) +\big[f(u(x_i,t_n))-f(u_i^n)\big]+R_i^{n+1},
\quad 1\leq n\leq N-1, i\in I,\\\label{err2}
&{^{\cal FH}}{\cal {\widehat D}}_t^\alpha e_i^{1}={\tilde R}^\sigma_i+ R_i^1,\quad i\in I,\\\label{err3}
&e_0^n=e_M^n=0, \quad 1\leq n\leq N,\\\label{err4}
& e^0_i=0,\quad i\in {\bar I}.
\end{align}
Next theorem will show that the error $e_i^n$ is bounded in the $H^1$-norm unconditionally.

\begin{theorem}\label{convergence}
  Let $u(x,t)$ be the solution of the problem \eqref{eq1}--\eqref{eq3}. Assume $u(x,t)\in{\cal C}^4(\Omega)\cap {\cal C}^4[0,T]$. Let $\{u_i^n,i\in{\bar I},0\leq n\leq N\}$ be the solutions of the scheme \eqref{sch1}--\eqref{sch4}. Then the errors $e_i^n$ satisfy
\begin{align}\notag
\|e^n\|_{H^1}\leq C(\tau^2+h^2+\epsilon),\quad 0\leq n\leq N.
\end{align}
\end{theorem}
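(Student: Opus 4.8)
The plan is to run a discrete energy argument in the spirit of Alikhanov-type analyses, testing the error equations with a weighted combination of the reduced-order error variables and then summing in time. First I would handle the first level: take the inner product of \eqref{err2} with $2[\sigma\widehat e^{1-\sigma}+(1-\sigma)\widehat e^{-\sigma}]$ (or directly with $\widehat e^{1-\sigma}$, using \eqref{e1sigma} and \eqref{error-discret-1}), use summation by parts to move $\delta_x^2$ onto $\widehat w^1$, and exploit the definition \eqref{for-w1} together with $e^0=0$ to produce a term of the form $|e^1|_1^2$ up to a positive factor. Bounding the right-hand side via $|{\tilde R}^\sigma_i+R_i^1|\le C({\bf g}_0^{(1)}\tau^2+\epsilon)$, Cauchy--Schwarz, and Lemma~\ref{coeff-property2}(a), this yields $\|e^1\|_{H^1}\le C(\tau^2+h^2+\epsilon)$ after using the Poincaré-type inequality $\|v\|_{H^1}\le C|v|_1$ stated before the theorem.

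For the general step $1\le n\le N-1$, I would test \eqref{err1} with $2[\sigma\widehat v$-type quantity$]$, precisely $2h\sum_i[\sigma\widehat e_i^{n+1-\sigma}+(1-\sigma)\widehat e_i^{n-\sigma}]$ applied to each term. The fractional term is handled by Lemma~\ref{stproof1} with $F^n$ chosen to absorb the Lipschitz nonlinearity plus the truncation term $R_i^{n+1}$; this produces a telescoping-in-$n$ structure $\sum_k {\bf g}_k^{(n+1)}(\widehat e^{k+1-\sigma})^2-\sum_k{\bf g}_k^{(n)}(\widehat e^{k+1-\sigma})^2$, a good sign term $-({\bf g}_1^{(n+1)}-{\bf g}_0^{(n)})(\widehat e^{1-\sigma})^2$ (note ${\bf g}_1^{(n+1)}={^{\cal F}{\widehat g}_1^{(n+1)}}={^{\cal F}{\widehat g}_0^{(n)}}+b_n={\bf g}_0^{(n)}+b_n$, so this coefficient equals $b_n\ge0$, harmless), and the penalty term $-{\bf g}_0^{(n+1)}(\widehat e^{-\sigma}+\tfrac1{{\bf g}_0^{(n+1)}}F^n)^2$. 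The diffusion term, after summation by parts and using $\widehat w^{n+1}+\widehat w^n$ built from \eqref{for-w1}, should telescope into an increment of a discrete energy of the form $|e^{n}|_1$-combinations (this is exactly the role of the "fitted" approximation in Lemma~\ref{implictscheme}: the averaged $w$ makes the spatial term reduce to a nonnegative difference of seminorms). Summing the resulting inequality from $n=1$ to $n=J-1$ collapses the telescoping sums and leaves, on the left, $\sum_{k=0}^{J-1}{\bf g}_k^{(J)}(\widehat e^{k+1-\sigma})^2$ plus a positive multiple of $|e^{J}|_1^2$, and on the right the initial contribution (controlled by the $n=1$ estimate), the accumulated truncation errors, and the accumulated nonlinear terms.

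The nonlinear and truncation contributions are then controlled as follows. Each $F^n$ contributes, through $-{\bf g}_0^{(n+1)}(\widehat e^{-\sigma}+\tfrac1{{\bf g}_0^{(n+1)}}F^n)^2\ge -\tfrac2{{\bf g}_0^{(n+1)}}(F^n)^2 + (\text{cross/square terms in }\widehat e^{-\sigma})$, a term of size $\tfrac1{{\bf g}_0^{(n+1)}}\|F^n\|^2$; since $F^n$ bundles $R^{n+1}=\mathcal O({\bf g}_0^{(n+1)}\tau^2+h^2+\epsilon)$ and $L\|e^n\|\le CL\tau\sum_{k\le n}\|\widehat e^{k+1-\sigma}\|$ (writing $e^n$ as a telescoping sum of $\widehat e$'s times $\tau$, which is where $v=u_t$ and the order-reduction come in), the $R$-part after multiplying by $\tau$ and summing is bounded using Lemma~\ref{coeff-property2}(b),(c) by $C(\tau^2+h^2+\epsilon)^2$, and the nonlinear part by $C\tau\sum_{n}\frac{1}{{\bf g}_0^{(n+1)}}\|e^n\|^2$, which by Lemma~\ref{coeff-property2}(c),(d) feeds into a discrete Gronwall inequality (Lemma~\ref{GRW}) with the energy $\sum_k{\bf g}_k^{(J)}(\widehat e^{k+1-\sigma})^2$ or equivalently $\|\widehat e\|$-sums as the Gronwall sequence. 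Applying Lemma~\ref{GRW} gives a bound $\|\widehat e^{k+1-\sigma}\|\le C(\tau^2+h^2+\epsilon)$ uniformly, and the same energy inequality simultaneously gives $|e^{J}|_1\le C(\tau^2+h^2+\epsilon)$; the Poincaré inequality then upgrades this to the $H^1$ estimate. The main obstacle I anticipate is bookkeeping the weighted coefficients so that the diffusion term genuinely telescopes into a nonnegative energy increment and that the auxiliary coefficient differences (the $b_n$ terms, the $\widetilde b_n$ appearing in $\widehat e^{-\sigma}$, and the mismatch ${\bf g}_1^{(n+1)}-{\bf g}_0^{(n)}$) all carry the right sign or are absorbable — this is precisely what Lemmas~\ref{coefficient-pro1}, \ref{multi-coeff-property}, \ref{coeff-property2} and \ref{stproof1} were built to supply, so the work is in assembling them correctly rather than in new ideas; a secondary technical point is ensuring the Gronwall kernel $k_n=C\tau/{\bf g}_0^{(n+1)}$ is summable, which is guaranteed by Lemma~\ref{coeff-property2}(c).
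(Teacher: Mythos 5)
Your proposal follows the paper's overall strategy (an energy argument testing the error equation with $2[\sigma\widehat e^{\,n+1-\sigma}+(1-\sigma)\widehat e^{\,n-\sigma}]=2(\widehat w^{n+1}-\widehat w^{n})/\tau$, invoking Lemma~\ref{stproof1} for the fractional term, summing in time, and closing with Gronwall), but two essential mechanisms of the paper's proof are missing, and both are load-bearing.

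First, $f$ is only Lipschitz on a domain $\Omega_f$, so before you may write $\|f(u(\cdot,t_n))-f(u^n)\|\le L\|e^n\|$ you must know that the numerical solution stays in a fixed bounded set. The paper organizes the entire argument as a mathematical induction on the recursive bound \eqref{induction}: the inductive hypothesis for $n\le q$ together with Gronwall and Lemma~\ref{coeff-property2}$(c)$ yields $\|e^n\|_\infty\le C(\tau^2+h^2+\epsilon)$, hence $\|u^n\|_\infty\le C_u+1$, and only then is $\Omega_f$ fixed so that the Lipschitz estimate is legitimate at level $q+1$. Your proposal applies the Lipschitz bound directly with no such bootstrapping, which is a gap for a nonlinearity that is not globally Lipschitz.

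Second, the summed energy inequality controls $\tau\sum_k{\bf g}_k^{(q+1)}\|\widehat e^{\,k+1-\sigma}\|^2$ and $|\widehat w^{q+1}|_1^2$, where $\widehat w^{q+1}$ is a three-level weighted combination of $e^{q+1},e^{q},e^{q-1}$ --- not $|e^{q+1}|_1^2$. You assert that the energy inequality ``simultaneously gives'' $|e^{J}|_1\le C(\tau^2+h^2+\epsilon)$, but extracting $\|e^{q+1}\|$ and $|e^{q+1}|_1$ from these quantities is precisely the hard part. The paper does it by: (i) the Cauchy--Schwarz step \eqref{sumck2}, paying a factor $C_\alpha$ from Lemma~\ref{coeff-property2}$(d)$, to convert the weighted sum into $\bigl\|\tau\sum_k\widehat e^{\,k+1-\sigma}\bigr\|^2$, which telescopes to a two-level combination of $e^{q+1}$ and $e^{q}$; (ii) combining this with $\sqrt{2C_\alpha}\,|\widehat w^{q+1}|_1$ and a reverse triangle inequality \eqref{errorB1} to obtain a recursion in the quantity $\|e^{n}\|+\sqrt{2C_\alpha}\,|\sigma e^{n}+(1-\sigma)e^{n-1}|_1$; and (iii) defining the induction quantity $S^n$ in \eqref{sn} as a \emph{maximum} of that squared quantity and $(2\sigma-1)^2C_\alpha|e^{n}|_1^2$, then running the two-case analysis of \cite{VongLyu-JSC2018} to show $S^{q+1}$ either does not exceed $S^{q}$ or is bounded by the accumulated right-hand side. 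Without some version of this bookkeeping, the telescoping leaves you with bounds on the wrong combinations of error levels, and the stated $H^1$ estimate does not follow from what you have proved. The remaining ingredients of your outline (the first-level estimate, the treatment of the $b_n$ and $\widetilde b_n$ terms, the Gronwall kernel $\tau/{\bf g}_0^{(k+1)}$) match the paper.
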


\begin{proof}
We finish the proof by using some techniques in Theorem 3.5 in \cite{VongLyu-JSC2018}.

From \eqref{err4}, one has $\|e^0\|_{\infty}=0$ .
We first utilize mathematical induction to show
\begin{align}\label{induction}
S^n\leq  C(\tau^2+h^2+\epsilon)^2
 + C\tau\sum_{k=0}^{n-1} \frac1{{\bf g}_0^{(k+1)}} S^k, \quad 1\leq n\leq N.
\end{align}
where
\begin{align}\label{sn}
S^n=\max\big\{(\|e^{n}\|+\sqrt{2C_\alpha}|\sigma e^{n}+(1-\sigma)e^{n-1}|_1)^2,  (2\sigma-1)^2C_\alpha|e^{n}|^2_1\big\}, ~ 1\leq n\leq N,~ \mbox{and} ~ S^0=0,
\end{align}
with $C_\alpha=Ct_{n+\sigma}^{2-\alpha_{i_1}}$.

It follows from \eqref{error-discret-1}--\eqref{e1sigma} that
\begin{align}\label{e1}
e_i^1=\frac{\tau}{(2-2\sigma){\bf g}_0^{(1)}}\left({\tilde R}^\sigma_i+ R_i^1 \right).
\end{align}
Then by \eqref{Rsigma} and Lemma \ref{coeff-property2} $(a)$, we have
\begin{align*}
|e^1|_1\leq C\tau^{\alpha_m}|{\tilde R}^\sigma |_1+C\tau^3 \leq C\tau^{3+\beta_m}+C\tau^3\leq C\tau^3.
\end{align*}
Hence \eqref{induction} holds for $n=0$ and $n=1$ ($\tau\leq1$).

Suppose \eqref{induction} is valid for $1\leq n\leq q$ ($1\leq q\leq N-1$), that is
\begin{align}\label{inductive-assumption}
S^n \leq  C(\tau^2+h^2+\epsilon)^2
 + C\tau\sum_{k=0}^{n-1} \frac1{{\bf g}_0^{(k+1)}} S^k, \quad 1\leq n\leq q.
\end{align}
Before proving that \eqref{induction} is valid for $n=q+1$, we show that the numerical solutions $u^n~(1\leq n\leq q)$ are uniformly bounded based on the inductive assumption. By using Lemma \ref{GRW} (Gronwall's inequality) and Lemma \ref{coeff-property2} $(c)$ on \eqref{inductive-assumption}, we can obtain
$$\|e^n\|_{\infty}^2\leq CS^n\leq  C(\tau^2+h^2+\epsilon)^2, \quad 1\leq n\leq q.$$
With the smooth assumption on the exact solution, which yields $ \|U^n\|_\infty\leq C_u$ for a positive constant $C_u$, it follows
$$\|u^n\|_\infty \leq \|U^n\|_\infty+\|e^n\|_\infty \leq C_u+1,\quad 1\leq n\leq q.$$
Hence we can take $\Omega_f=[-C_u-1,C_u+1]\cup [\inf\{\varphi+\sigma\tau\psi\},\sup\{\varphi+\sigma\tau\psi\}]$ in the rest proof.

We now verify that \eqref{induction} is valid for $n=q+1$.

Taking the inner product of \eqref{err1} with
\begin{align}\notag
 2\big[\sigma {\widehat e}_i^{n+1-\sigma} +(1-\sigma){\widehat e}_i^{n-\sigma}\big]=2\Big(\frac{{\widehat w}_i^{n+1}-{\widehat w}_i^n}{\tau}\Big),
 \quad 1\leq n\leq q,
\end{align}
we have
\begin{align}\label{error1_1}
2\left\langle {^{\cal FH}}{\cal {\widehat D}}_t^\alpha e^{n+1}-{\tilde F}^{n},\sigma {\widehat e}^{n+1-\sigma} +(1-\sigma){\widehat e}^{n-\sigma}\right\rangle
=2\left\langle \delta_x^2 \Big( \frac{{\widehat w}^{n+1}+{\widehat w}^n}{2} \Big),\frac{{\widehat w}^{n+1}-{\widehat w}^n}{\tau} \right\rangle  ,
\end{align}
where ${\tilde F}_i^{n}=\big[f(u(x_i,t_n))-f(u_i^n)\big] +R_i^{n+1}$.

With the boundary values being zero, it is easy to verify that
\begin{align}\label{wn1norm}
-2\left\langle \delta_x^2 \Big(\frac{{\widehat w}^{n+1}+{\widehat w}^n}{2}\Big),\frac{{\widehat w}^{n+1}-{\widehat w}^n}{\tau}
\right\rangle=\frac{|{\widehat w}^{n+1}|_1^2-|{\widehat w}^{n}|_1^2}{\tau}.
\end{align}
Utilizing Lemma \ref{stproof1}, we get
\begin{align}\notag
&2\left\langle {^{\cal FH}}{\cal {\widehat D}}_t^\alpha e^{n+1}-{\tilde F}^{n}, \sigma {\widehat e}^{n+1-\sigma}+(1-\sigma){\widehat e}^{n-\sigma}\right\rangle\\\label{wn1norm2}
\geq &\sum_{k=0}^n {\bf g}_{k}^{(n+1)} \|{\widehat e}^{k+1-\sigma}\|^2-\sum_{k=0}^{n-1} {\bf g}_{k}^{(n)}\|{\widehat e}^{k+1-\sigma}\|^2-\left({\bf g}_1^{(n+1)}-{\bf g}_0^{(n)} \right)\|{\widehat e}^{1-\sigma}\|^2
     -{\bf g}_0^{(n+1)}\Big\|{\widehat e}^{-\sigma}+\frac1{{\bf g}_0^{(n+1)}}{\tilde F}^{n} \Big\|^2.
\end{align}
Substituting \eqref{wn1norm} and \eqref{wn1norm2} into \eqref{error1_1}, we obtain
\begin{align}\label{error2}
 E^{n+1}-E^n \leq \tau \left|{\bf g}_1^{(n+1)}-{\bf g}_0^{(n)} \right| \|{\widehat e}^{1-\sigma}\|^2+
 \tau{\bf g}_0^{(n+1)}\|{\widehat e}^{-\sigma}\|^2+ \frac{\tau}{{\bf g}_0^{(n+1)}}\|{\tilde F}^{n}\|^2,\quad
1\leq n\leq p,
\end{align}
where
$$E^n=\tau \sum_{k=0}^{n-1} {\bf g}_{k}^{(n)}{\|{\widehat e}^{k+1-\sigma}\|}^2+|{\widehat w}^n|_1^2.$$
Summing up \eqref{error2} for $n$ from $1$ to $p$ yield
\begin{align*}
E^{p+1} \leq  E^{1}+\tau \sum_{n=1}^q \left|{\bf g}_1^{(n+1)}-{\bf g}_0^{(n)} \right| \|{\widehat e}^{1-\sigma}\|^2+ \tau\sum_{n=1}^q {\bf g}_0^{(n+1)} \|{\widehat e}^{-\sigma}\|^2 +\tau\sum_{n=1}^q\frac{1}{{\bf g}_0^{(n+1)}}\|{\tilde F}^{n}\|^2,
\end{align*}
that is
\begin{align}\nonumber
\tau \sum_{k=0}^{q} {\bf g}_{k}^{(q+1)} \|{\widehat e}^{k+1-\sigma}\|^2+|{\widehat w}^{q+1}|_1^2
\leq & |{\widehat w}^{1}|_1^2+\tau {\bf g}_0^{(1)} \|{\widehat e}^{1-\sigma}\|^2+\tau \sum_{n=1}^q \left|{\bf g}_1^{(n+1)}-{\bf g}_0^{(n)} \right| \|{\widehat e}^{1-\sigma}\|^2\\\label{sumck1}
&+ \tau\sum_{n=1}^q {\bf g}_0^{(n+1)} \|{\widehat e}^{-\sigma}\|^2 +\tau\sum_{n=1}^q\frac{1}{{\bf g}_0^{(n+1)}}\|{\tilde F}^{n}\|^2.
\end{align}
It can be verified by using Cauchy-Schwarz inequality and Lemma \ref{coeff-property2} $(d)$ that
\begin{align}\label{sumck2}
\Big\|\tau\sum_{k=0}^{q} {\widehat e}^{k+1-\sigma}\Big\|^2 \leq\left(\tau\sum_{k=0}^{q}\frac{1}{{\bf g}_{k}^{(q+1)}} \right) {\tau}\sum_{k=0}^{q}{\bf g}_{k}^{(q+1)} \|{\widehat e}^{k+1-\sigma}\|^2
\leq C_{\alpha}\left( {\tau}\sum_{k=0}^{q}{\bf g}_{k}^{(q+1)} \|{\widehat e}^{k+1-\sigma}\|^2\right).
\end{align}
Furthermore, the inequality $2(y^2+z^2)\geq (y+z)^2$ gives
\begin{align}\nonumber
2\left(\Big\|(\sigma-\frac12)e^1\Big\|^2+\Big\|\tau\sum_{k=0}^{q} {\widehat e}^{k+1-\sigma}\Big\|^2\right)\geq&
\Big\|(\sigma-\frac12)e^1+\tau\sum_{k=0}^{q} {\widehat e}^{k+1-\sigma}\Big\|^2\\\label{sumck3}
=&\Big\|(\frac{3}{2}-\sigma)e^{q+1}+(\sigma-\frac12)e^q\Big\|^2.
\end{align}
Consequently, it follows from \eqref{sumck1}--\eqref{sumck3} that
\begin{align}\label{errorB}
\Big\|(\frac{3}{2}-\sigma)e^{q+1}+(\sigma-\frac12)e^q\Big\|^2+2C_\alpha|{\widehat w}^{q+1}|_1^2\leq 2B^q,
\end{align}
where 
\begin{align}\notag
B^q=\Big\|(\sigma-\frac12)e^1\Big\|^2+C_\alpha\Big\{&|{\widehat w}^{1}|_1^2+\tau {\bf g}_0^{(n)} \|{\widehat e}^{1-\sigma}\|+\tau \sum_{n=1}^q \left|{\bf g}_1^{(n+1)}-{\bf g}_0^{(n)} \right| \|{\widehat e}^{1-\sigma}\|^2\\\label{Bm}
&+ \tau\sum_{n=1}^q {\bf g}_0^{(n+1)} \|{\widehat e}^{-\sigma}\|^2 +\tau\sum_{n=1}^q\frac{1}{{\bf g}_0^{(n+1)}}\|{\tilde F}^{n}\|^2\Big\}.
\end{align}
We further note that the term on the left hand side of \eqref{errorB} satisfies
\begin{align}\nonumber
&\Big\|(\frac{3}{2}-\sigma)e^{q+1}+(\sigma-\frac12)e^q\Big\|^2+2C_\alpha|{\widehat w}^{q+1}|_1^2\\\nonumber
\geq&\frac12\left(\Big \|(\frac{3}{2}-\sigma)e^{q+1}+(\sigma-\frac12)e^q\Big\|+\sqrt{2C_\alpha}|{\widehat w}^{q+1}|_1 \right)^2\\\nonumber
=& \frac12\left( \Big\|(\frac{3}{2}-\sigma)e^{q+1}+(\sigma-\frac12)e^q\Big\|\right.\\\nonumber
&\left.+\sqrt{2C_\alpha}\Big|(\frac32-\sigma)\big[\sigma e^{q+1}+(1-\sigma)e^q\big]+(\sigma-\frac12)\big[\sigma e^{q}+(1-\sigma)e^{q-1}\big]\Big|_1\right)^2\\\nonumber
\geq &\frac12\left[ (\frac{3}{2}-\sigma)\Big(\|e^{q+1}\|+\sqrt{2C_\alpha}\big|\sigma e^{q+1}+(1-\sigma)e^q\big|_1\Big)\right.\\\label{errorB1}
&\left.-(\sigma-\frac12)\Big(\|e^q\|+\sqrt{2C_\alpha}\big|\sigma e^{q}+(1-\sigma)e^{q-1}\big|_1\Big)\right]^2.
\end{align}
Combining \eqref{errorB} and \eqref{errorB1}, we get
{\small
\begin{align}\label{errorB2}
\left[ \Big(\frac{3}{2}-\sigma\Big)\Big(\|e^{q+1}\|+\sqrt{2C_\alpha}\big|\sigma e^{q+1}+(1-\sigma)e^q\big|_1\Big)-\Big(\sigma-\frac12\Big)\Big(\|e^q\|+\sqrt{2C_\alpha}\big|\sigma e^{q}+(1-\sigma)e^{q-1}\big|_1\Big)\right]^2
\leq 4B^q.
\end{align}}
Referring to the proof of Theorem 3.5 in \cite{VongLyu-JSC2018}, we discuss the following two cases:\\
{\bf Case (I)} $\|e^{q+1}\|+\sqrt{2C_\alpha}\big|\sigma e^{q+1}+(1-\sigma)e^q\big|_1\leq \|e^q\|+\sqrt{2C_\alpha}\big|\theta e^{q}+(1-\sigma)e^{q-1}\big|_1$.


Similar to that in \cite{VongLyu-JSC2018}, we can obtain $S^{q+1}\leq S^q$ in this case, so \eqref{induction} follows directly.\\
{\bf Case (II)} $\|e^{q+1}\|+\sqrt{2C_\alpha}\big|\sigma e^{q+1}+(1-\sigma)e^q\big|_1\geq \|e^q\|+\sqrt{2C_\alpha}\big|\sigma e^{q}+(1-\sigma)e^{q-1}\big|_1$.

In this situation, we have
\begin{align*}
&(\frac{3}{2}-\sigma)\Big(\|e^{q+1}\|+\sqrt{2C_\alpha}\big|\sigma e^{q+1}+(1-\sigma)e^q\big|_1\Big)-(\sigma-\frac12)\Big(\|e^q\|+\sqrt{2C_\alpha}\big|\sigma e^{q}+(1-\sigma)e^{q-1}\big|_1\Big)\\
&\geq (2-2\sigma)\Big( \|e^{q+1}\|+\sqrt{2C_\alpha}\big|\sigma e^{q+1}+(1-\sigma)e^q\big|_1\Big).
\end{align*}
With the above inequality and \eqref{errorB2}, it follows
\begin{align}\label{errorSituationII}
\Big( \|e^{q+1}\|+\sqrt{2C_\alpha}\big|\sigma e^{q+1}+(1-\sigma)e^q\big|_1\Big)^2\leq \frac{B^q}{(1-\sigma)^2}.
\end{align}
We next estimate $B^q$ term by term. Firstly, with \eqref{e1}, we have
\begin{align}\label{errore1}
2\|(\sigma-\frac12)e^1\|^2= 2(\sigma-\frac12)^2\|e^1\|^2 \leq C \tau^4.
\end{align}
Combining \eqref{for-w1} and \eqref{e1}, we get
\begin{align}\label{errorw1}
|{\widehat w}^1|_1^2&=(3\sigma-2\sigma^2-\frac12)^2|e^1|_1^2
\leq C \tau^4.
\end{align}
%
From \eqref{error-discret-1}, \eqref{err2} and \eqref{Rsigma}, we have
$${\widehat e}_i^{1-\sigma}=\frac{1}{{\bf g}_0^{(1)}}({\tilde R}_i^\sigma+R_i^1)\leq C\tau^2.$$
Then with Lemma \ref{coeff-property2} $(a)$,
\begin{align}\label{estimate-ehat1}
\tau {\bf g}_0^{(1)} \|{\widehat e}^{1-\sigma}\|^2\leq C\tau^4.
\end{align}
Observing \eqref{fast-coefficient} and \eqref{bfg}, we know that ${\bf g}_0^{(n)}={^{\cal F}{{\widehat g}_0^{n}}}-\frac12b_{n-1}$ for $n\geq 2$, ${\bf g}_0^{(1)}={^{\cal F}{{\widehat g}_0^{1}}}$,  and ${\bf g}_1^{(n+1)}={^{\cal F}{\widehat  g}_1^{n+1}}={^{\cal F}{\widehat g}}_0^n+b_n$ for $n\geq 1$. Then we get
\begin{align}\notag
\tau\dsum_{n=1}^q\left|{\bf g}_1^{(n+1)}-{\bf g}_0^{(n)}\right|=&\tau\left(\frac32\dsum_{n=1}^qb_n-\frac12b_q\right)
\leq \frac32\tau\dsum_{n=1}^qb_n
\leq C\tau\dsum_{n=1}^q{\bf g}_0^{(n+1)}\leq C,
\end{align}
where Lemma \ref{coefficient-pro1} (b) and Lemma \ref{coeff-property2} (b) have been used in the last two inequalities. Therefore,
\begin{align}\label{estimate-ehat2}
\tau \sum_{n=1}^q \left|{\bf g}_1^{(n+1)}-{\bf g}_0^{(n)} \right| \|{\widehat e}^{1-\sigma}\|^2\leq C\tau^4.
\end{align}
%
From \eqref{e1sigma} and Lemma \ref{coefficient-pro1} (b),
$$\|{\widehat e}^{-\sigma}\|={\tilde b_n}\|{\widehat e}^{1-\sigma}\|=\frac{(3\sigma-1)b_n}{2(1-\sigma){\bf g}_0^{(n+1)}}\|{\widehat e}^{1-\sigma}\|\leq C \|{\widehat e}^{1-\sigma}\|\leq C\tau^2,$$
thus it follows
\begin{align}\label{estimate-ehat2}
 \tau\sum_{n=1}^q {\bf g}_0^{(n+1)} \|{\widehat e}^{-\sigma}\|^2\leq C\tau^4.
\end{align}
Note that
\begin{align}\notag
\tau\sum_{n=1}^q\frac{1}{{\bf g}_0^{(n+1)}}\|{\tilde F}^{n}\|^2\leq &
\tau\sum_{n=1}^q\frac{1}{{\bf g}_0^{(n+1)}}\left(\|f(u(\cdot,t_n))-f(u^n)\|+\|R^{n+1}\| \right)^2\\\notag
\leq &
C\tau\sum_{n=1}^q\frac{1}{{\bf g}_0^{(n+1)}}\left(\|e^n\|+{\bf g}_0^{(n+1)}\tau^2+h^2+\epsilon \right)^2\\\notag
\leq &
C\tau\sum_{n=1}^q\frac{1}{{\bf g}_0^{(n+1)}}\|e^n\|^2+C\tau\sum_{n=1}^q{\bf g}_0^{(n+1)}\tau^4+C(h^2+\epsilon)^2\\\label{Fn}
\leq &
C\tau\sum_{n=0}^q\frac{1}{{\bf g}_0^{(n+1)}}S^n +C(\tau^2+h^2+\epsilon)^2,
\end{align}
where Lemma \ref{coeff-property2} $(b)$ and $(c)$ have been used.

Thus, \eqref{Bm} and \eqref{errorSituationII}--\eqref{Fn} yield
\begin{align}\label{smresult}
&\Big( \|e^{q+1}\|+\sqrt{2C_\alpha}\big|\sigma e^{q+1}+(1-\sigma)e^q\big|_1\Big)^2\leq
C\tau\sum_{n=0}^q\frac{1}{{\bf g}_0^{(n+1)}}S^n +C(\tau^2+h^2+\epsilon)^2.
\end{align}
Similar discussion with {\bf Case (II)}  in Theorem 3.5 of \cite{VongLyu-JSC2018}, we can get
\begin{align}\label{caseII}
S^{q+1}\leq S^{q}\quad \mbox{or} \quad
S^{q+1}=\Big( \|e^{q+1}\|+\sqrt{2C_\alpha}\big|\sigma e^{q+1}+(1-\sigma)e^q\big|_1\Big)^2.
\end{align}
Hence, the inequality \eqref{induction} is clarified according to \eqref{smresult}--\eqref{caseII}.

Consequently, we can apply Lemma \ref{GRW} (Gronwall's inequality) and Lemma \ref{coeff-property2} $(c)$ on \eqref{induction} to conclude
$$S^n\leq {C}(\tau^2+h^2+\epsilon)^2, \quad 0\leq n\leq N.$$
With \eqref{sn}, we then obtain the desired result.
\end{proof}

\begin{remark}
In a way similar to the proof of convergence, we can show that the numerical scheme \eqref{sch1}--\eqref{sch4}  is unconditionally stable with respect to discrete $H^1$-norm. Readers can refer to \cite{VongLyu-JSC2018} for more details.
\end{remark}

\section{Numerical experiments}\label{Sec-numer}

In this section, we carry out numerical experiments for the proposed finite difference schemes \eqref{sch1}-\eqref{sch4} to illustrate our theoretical statements.
The $H^1$-norm errors between the exact and the numerical solutions $$E_{1}(h,\tau)=\max_{0\leq n\leq N}\|e^n\|_{H^1}$$ are shown in the following tables and the convergence rates defined by
$$\mbox{Rate1}=\log_2\bigg[\dfrac{E_1(h,2\tau)}{E_1(h,\tau)}\bigg],~
 \mbox{Rate2}=\log_2\bigg[\dfrac{E_1(2h,\tau)}{E_1(h,\tau)}\bigg],$$
 are also recorded.

In the following tests, scheme1 stands for  the scheme \eqref{sch1}-\eqref{sch4}, and scheme2 represents the direct scheme which is similar to the scheme1 but without using the SOE.

We consider the problem for $x\in[0,1],~T=1$ and the forcing term
$$p(x,t)=\left[\dsum_{r=0}^m24\lambda_r\frac{t^{4-\alpha_r}}{\Gamma(4-\alpha_r)}+\pi^2t^4\right]\sin(\pi x)+f\big[\sin(\pi x)t^4\big],$$
is chosen to such the exact solution $u(x,t)=\sin(\pi x)t^4$, where
\begin{align*}
\textbf{Case1}~~&f(u)=2u^3,\\
\textbf{Case2}~~&f(u)=\sin(u),\\
\textbf{Case3}~~&f(u)=\left(u^2+5\right)^{\frac12}.
\end{align*}
\begin{table}[htbp]
 \begin{center}
 \caption{Numerical convergence orders of scheme \eqref{sch1}-\eqref{sch4} in temporal direction with $h=\frac{\pi}{1000}$ and $\epsilon_r=\tau^{4-\alpha_r}\times10^{-3}$.}\label{table1}
 \renewcommand{\arraystretch}{0.9}
 \def\temptablewidth{0.9\textwidth}
 {\rule{\temptablewidth}{0.7pt}}
 \begin{tabular*}{\temptablewidth}{@{\extracolsep{\fill}}lllllll}
&$(\alpha_0,\alpha_1,\alpha_2)$& $\tau$  &\multicolumn{2}{c}{$(\lambda_0,\lambda_1,\lambda_2)=(3,2,1)$}&\multicolumn{2}{c}{$(\lambda_0,\lambda_1,\lambda_2)=(1,2,3)$}\\
 \cline{4-5}\cline{6-7}
 &    &        &$E_1(h,\tau)$&Rate1&$E_1(h,\tau)$
 &Rate1\\\hline

 &(1.9,1.5,1.2) &$1/20$   & 2.7876e-03  & $\ast$  & 3.3012e-03  & $\ast$  \\
 &    &$1/40$  & 6.8270e-04  & 2.0297  & 7.9001e-04  & 2.0631  \\
 &    &$1/80$  & 1.6690e-04  & 2.0323  & 1.8917e-04  & 2.0622  \\
 &    &$1/160$  & 4.0829e-05  & 2.0313  & 4.5438e-05  & 2.0577  \\
\textbf{Case 1} &(1.8,1.4,1.3) &$1/20$   & 3.9224e-03  & $\ast$  & 3.7014e-03  & $\ast$  \\
 &    &$1/40$  & 7.8078e-04  & 2.0255  & 9.7427e-04  & 2.0093  \\
 &    &$1/80$  & 1.9145e-04  & 2.0279  & 2.4156e-04  & 2.0120  \\
 &    &$1/160$  & 4.6959e-05  & 2.0275  & 5.9816e-05  & 2.0138  \\
 &(1.6,1.5,1.2)&$1/20$   & 3.7121e-03  & $\ast$  & 3.9224e-03  & $\ast$  \\
 &    &$1/40$  & 9.2321e-04  & 2.0075  & 9.7427e-04  & 2.0093  \\
 &    &$1/80$  & 2.2903e-04  & 2.0111  & 2.4156e-04  & 2.0120  \\
 &    &$1/160$  & 5.6786e-05  & 2.0119  & 5.9816e-05  & 2.0138  \\\hline
 &(1.9,1.5,1.2) &$1/20$   & 2.7710e-03  & $\ast$  & 3.2879e-03  & $\ast$  \\
 &    &$1/40$  & 6.7866e-04   & 2.0297  & 7.8722e-04  & 2.0623  \\
 &    &$1/80$  & 1.6591e-04  & 2.0323  & 1.8854e-04  & 2.0619  \\
 &    &$1/160$  & 4.0587e-05  & 2.0313  & 4.5292e-05  & 2.0575  \\
\textbf{Case 2} &(1.8,1.4,1.3) &$1/20$   & 3.1645e-03  & $\ast$  & 3.6318e-03  & $\ast$  \\
 &    &$1/40$  & 7.7725e-04  & 2.0255  & 8.8483e-04  & 2.0372  \\
 &    &$1/80$  & 1.9049e-04  & 2.0286  & 2.1474e-04  & 2.0428  \\
 &    &$1/160$  & 4.6706e-05  & 2.0281  & 5.2092e-05   & 2.0434  \\
 &(1.6,1.5,1.2)&$1/20$   & 3.6983e-03   & $\ast$  & 3.9177e-03  & $\ast$  \\
 &    &$1/40$  & 9.2024e-04  & 2.0068  & 9.7389e-04  & 2.0082  \\
 &    &$1/80$  & 2.2833e-04  & 2.0109  & 2.4152e-04  & 2.0116  \\
 &    &$1/160$  & 5.6615e-05  & 2.0119  & 5.9812e-05  & 2.0136  \\ \hline
 &(1.9,1.5,1.2) &$1/20$   & 2.8107e-03  & $\ast$  & 3.3492e-03  & $\ast$  \\
 &    &$1/40$  & 6.8858e-04  & 2.0292  & 8.0192e-04  & 2.0623  \\
 &    &$1/80$  & 1.6834e-04  & 2.0322  & 1.9201e-04  & 2.0623  \\
 &    &$1/160$  & 4.1184e-05  & 2.0313  & 4.6092e-05  & 2.0586  \\
\textbf{Case 3} &(1.8,1.4,1.3) &$1/20$   & 3.2148e-03  & $\ast$  & 3.7014e-03  & $\ast$  \\
 &    &$1/40$  & 7.8977e-04  & 2.0252  & 9.0182e-04  & 2.0372  \\
 &    &$1/80$  & 1.9357e-04  & 2.0286  & 2.1884e-04  & 2.0430  \\
 &    &$1/160$  & 4.7458e-05  & 2.0281  & 5.3082e-05  & 2.0436  \\
 &(1.6,1.5,1.2)&$1/20$   & 3.7646e-03  & $\ast$  & 3.9971e-03  & $\ast$  \\
 &    &$1/40$  & 9.3687e-04  & 2.0066  & 9.9369e-04  & 2.0081  \\
 &    &$1/80$  & 2.3246e-04  & 2.0109  & 2.4642e-04  & 2.0117  \\
 &    &$1/160$  & 5.7637e-05  & 2.0119  & 6.1024e-05  & 2.0137  \\
\end{tabular*}
{\rule{\temptablewidth}{0.7pt}}

\end{center}
\end{table}

\begin{table}[htbp]
 \begin{center}
 \caption{Numerical convergence orders of direct scheme (scheme2) in temporal direction with $h=\frac{\pi}{1000}$ .}\label{table2}
 \renewcommand{\arraystretch}{0.9}
 \def\temptablewidth{0.9\textwidth}
 {\rule{\temptablewidth}{0.7pt}}
 \begin{tabular*}{\temptablewidth}{@{\extracolsep{\fill}}lllllll}
&$(\alpha_0,\alpha_1,\alpha_2)$& $\tau$  &\multicolumn{2}{c}{$(\lambda_0,\lambda_1,\lambda_2)=(3,2,1)$}&\multicolumn{2}{c}{$(\lambda_0,\lambda_1,\lambda_2)=(1,2,3)$}\\
 \cline{4-5}\cline{6-7}
 &    &        &$E_1(h,\tau)$&Rate1&$E_1(h,\tau)$
 &Rate1\\\hline

 &(1.9,1.5,1.2) &$1/20$   & 2.7876e-03  & $\ast$  & 3.3012e-03  & $\ast$  \\
 &    &$1/40$  & 6.8271e-04  & 2.0297  & 7.9001e-04  & 2.0631  \\
 &    &$1/80$  & 1.6689e-04  & 2.0324  & 1.8916e-04  & 2.0623  \\
 &    &$1/160$  & 4.0830e-05  & 2.0312  & 4.5463e-05  & 2.0568  \\
\textbf{Case 1} &(1.8,1.4,1.3) &$1/20$   & 3.1816e-03  & $\ast$  & 3.6421e-03  & $\ast$  \\
 &    &$1/40$  & 7.8130e-04  & 2.0258  & 8.8677e-04  & 2.0381  \\
 &    &$1/80$  & 1.9147e-04  & 2.0287  & 2.1516e-04  & 2.0431  \\
 &    &$1/160$  & 4.6950e-05  & 2.0280  & 5.2184e-05  & 2.0438  \\
 &(1.6,1.5,1.2)&$1/20$   & 3.7121e-03  & $\ast$  & 3.9224e-03  & $\ast$  \\
 &    &$1/40$  & 9.2322e-04  & 2.0075  & 9.7428e-04  & 2.0093  \\
 &    &$1/80$  & 2.2903e-04  & 2.0112  & 2.4155e-04  & 2.0120  \\
 &    &$1/160$  & 5.6776e-05  & 2.0122  & 5.9846e-05  & 2.0130  \\\hline
 &(1.9,1.5,1.2) &$1/20$   & 2.7711e-03  & $\ast$  & 3.2880e-03  & $\ast$  \\
 &    &$1/40$  & 6.7867e-04  & 2.0297  & 7.8723e-04  & 2.0623  \\
 &    &$1/80$  & 1.6590e-04  & 2.0324  & 1.8853e-04  & 2.0620  \\
 &    &$1/160$  & 4.0588e-05  & 2.0312  & 4.5316e-05  & 2.0567  \\
\textbf{Case 2} &(1.8,1.4,1.3) &$1/20$   & 3.1645e-03  & $\ast$  & 3.6318e-03  & $\ast$  \\
 &    &$1/40$  & 7.7726e-04  & 2.0255  & 8.8485e-04  & 2.0372  \\
 &    &$1/80$  & 1.9050e-04  & 2.0286  & 2.1474e-04  & 2.0428  \\
 &    &$1/160$  & 4.6712e-05  & 2.0279  & 5.2088e-05  & 2.0436  \\
 &(1.6,1.5,1.2)&$1/20$   & 3.6983e-03  & $\ast$  & 3.9177e-03  & $\ast$  \\
 &    &$1/40$  & 9.2025e-04  & 2.0068  & 9.7391e-04  & 2.0082  \\
 &    &$1/80$  & 2.2832e-04  & 2.0109  & 2.4152e-04  & 2.0117  \\
 &    &$1/160$  & 5.6605e-05  & 2.0121  & 5.9842e-05  & 2.0129  \\ \hline
 &(1.9,1.5,1.2) &$1/20$   & 2.8107e-03  & $\ast$  & 3.3493e-03  & $\ast$  \\
 &    &$1/40$  & 6.8858e-04  & 2.0292  & 8.0194e-04  & 2.0623  \\
 &    &$1/80$  & 1.6834e-04  & 2.0323  & 1.9203e-04  & 2.0621  \\
 &    &$1/160$  & 4.1184e-05  & 2.0312  & 4.6154e-05  & 2.0568  \\
\textbf{Case 3} &(1.8,1.4,1.3) &$1/20$   & 3.2148e-03  & $\ast$  & 3.7015e-03  & $\ast$  \\
 &    &$1/40$  & 7.8978e-04  & 2.0252  & 9.0183e-04  & 2.0372  \\
 &    &$1/80$  & 1.9357e-04  & 2.0286  & 2.1885e-04  & 2.0429  \\
 &    &$1/160$  & 4.7464e-05  & 2.0279  & 5.3078e-05   & 2.0437  \\
 &(1.6,1.5,1.2)&$1/20$   & 3.7647e-03  & $\ast$  & 3.9972e-03  & $\ast$  \\
 &    &$1/40$  & 9.3688e-04  & 2.0066  & 9.9370e-04  & 2.0081  \\
 &    &$1/80$  & 2.3245e-04  & 2.0109  & 2.4642e-04  & 2.0117  \\
 &    &$1/160$  & 5.7627e-05  & 2.0121  & 6.1054e-05  & 2.0129  \\
\end{tabular*}
{\rule{\temptablewidth}{0.7pt}}

\end{center}
\end{table}

\begin{table}[htbp]
 \begin{center}
 \caption{Numerical convergence orders of \eqref{sch1}-\eqref{sch4} scheme in spatial direction with $\tau=\frac{1}{1000}$ and $\epsilon_r=\tau^{4-\alpha_r}\times10^{-3}.$}\label{table3}
 \renewcommand{\arraystretch}{0.9}
 \def\temptablewidth{0.9\textwidth}
 {\rule{\temptablewidth}{0.7pt}}
 \begin{tabular*}{\temptablewidth}{@{\extracolsep{\fill}}lllllll}
&$(\alpha_0,\alpha_1,\alpha_2)$& $h$  &\multicolumn{2}{c}{$(\lambda_0,\lambda_1,\lambda_2)=(3,2,1)$}&\multicolumn{2}{c}{$(\lambda_0,\lambda_1,\lambda_2)=(1,2,3)$}\\
 \cline{4-5}\cline{6-7}
 &    &        &$E_1(h,\tau)$&Rate2&$E_1(h,\tau)$
 &Rate2\\\hline

 &(1.9,1.5,1.2) &$1/10$   &  7.1988e-04  & $\ast$  & 1.0492e-03  & $\ast$  \\
 &    &$1/20$  & 1.5600e-04  & 2.2062 & 2.2858e-04  & 2.1985  \\
 &    &$1/40$  & 3.5095e-05  & 2.1523  & 5.2581e-05  & 2.1201  \\
 &    &$1/80$  & 7.7903e-06  & 2.1715  & 1.2792e-05  & 2.0393  \\
\textbf{Case 1} &(1.8,1.4,1.3) &$1/10$   & 8.2721e-04  & $\ast$  & 1.0947e-03  & $\ast$  \\
 &    &$1/20$  & 1.7903e-04  & 2.2081  & 2.3707e-04  & 2.2071  \\
 &    &$1/40$  & 4.0068e-05  & 2.1597  & 5.3245e-05  & 2.1546  \\
 &    &$1/80$  & 8.6959e-06  & 2.2040  & 1.1739e-05   & 2.1814  \\
 &(1.6,1.5,1.2)&$1/10$   & 9.8274e-04  & $\ast$  & 1.1904e-03  & $\ast$  \\
 &    &$1/20$  & 2.1253e-04  & 2.2091  & 2.5784e-04  & 2.2069  \\
 &    &$1/40$  & 4.7440e-05  & 2.1635  & 5.7963e-05  & 2.1533  \\
 &    &$1/80$  & 1.0175e-05  & 2.2210  & 1.2831e-05  & 2.1755  \\\hline
 &(1.9,1.5,1.2) &$1/10$   & 7.2106e-04  & $\ast$  & 1.0536e-03  & $\ast$  \\
 &    &$1/20$  & 1.5627e-04  & 2.2061  & 2.2955e-04  & 2.1984  \\
 &    &$1/40$  & 3.5160e-05  & 2.1520  & 5.2809e-05 & 2.1199  \\
 &    &$1/80$  & 7.8106e-06  & 2.1704  & 1.2853e-05  & 2.0387  \\
\textbf{Case 2} &(1.8,1.4,1.3) &$1/10$   & 8.2933e-04  & $\ast$  & 1.0536e-03  & $\ast$  \\
 &    &$1/20$  & 1.7950e-04  & 2.2080  & 2.2955e-04  & 2.1984  \\
 &    &$1/40$  & 4.0180e-05  & 2.1594  & 5.2809e-05  & 2.1199  \\
 &    &$1/80$  & 8.7272e-06  & 2.2029  & 1.2853e-05  & 2.0387  \\
 &(1.6,1.5,1.2)&$1/10$   & 9.8719e-04  & $\ast$  & 1.1988e-03  & $\ast$  \\
 &    &$1/20$  & 2.1350e-04  & 2.2091  & 2.5968e-04 & 2.2068  \\
 &    &$1/40$  & 4.7666e-05  & 2.1632  & 5.8386e-05  & 2.1531  \\
 &    &$1/80$  & 1.0232e-05  & 2.2198  & 1.2934e-05  & 2.1745  \\ \hline
 &(1.9,1.5,1.2) &$1/10$   & 7.2403e-04  & $\ast$  & 1.0605e-03  & $\ast$  \\
 &    &$1/20$  & 1.5690e-04  & 2.2062 & 2.3104e-04  & 2.1985  \\
 &    &$1/40$  & 3.5295e-05  & 2.1523  & 5.3145e-05  & 2.1202  \\
 &    &$1/80$  & 7.8325e-06  & 2.1719  & 1.2928e-05  & 2.0395  \\
\textbf{Case 3} &(1.8,1.4,1.3) &$1/10$   & 8.3342e-04  & $\ast$  & 1.0947e-03  & $\ast$  \\
 &    &$1/20$  & 1.8037e-04  & 2.2081  & 2.4001e-04  & 2.2071  \\
 &    &$1/40$  & 4.0365e-05  & 2.1598  & 5.3899e-05  & 2.1547  \\
 &    &$1/80$  & 8.7573e-06  & 2.2046  & 1.1878e-05   & 2.1819  \\
 &(1.6,1.5,1.2)&$1/10$   & 9.9324e-04  & $\ast$  & 1.2081e-03  & $\ast$  \\
 &    &$1/20$  & 2.1479e-04  & 2.2092  & 2.6167e-04  & 2.2069  \\
 &    &$1/40$  & 4.7941e-05  & 2.1636  & 5.8817e-05  & 2.1534  \\
 &    &$1/80$  & 1.0278e-05  & 2.2217  & 1.3015e-05  & 2.1761  \\
\end{tabular*}
{\rule{\temptablewidth}{0.7pt}}

\end{center}
\end{table}

\begin{table}[htbp]
 \begin{center}
 \caption{CPU in seconds of scheme1 and scheme2 with $h=\frac{\pi} {50}$, $\alpha=(1.6,1.5,1.2)$, $\lambda=(3,2,1)$ and $\epsilon_r=\tau^{4-\alpha_r}.$}\label{table4}
 \renewcommand{\arraystretch}{0.8}
 \def\temptablewidth{0.8\textwidth}
 {\rule{\temptablewidth}{0.7pt}}
 \begin{tabular*}{\temptablewidth}{@{\extracolsep{\fill}}lllllll}
   $\tau$   &\multicolumn{2}{c}{\textbf{Case 1}}&\multicolumn{2}{c}{\textbf{Case 2}}&\multicolumn{2}{c}{\textbf{Case 3}}\\
\cline{2-3}\cline{4-5}\cline{6-7}
             &scheme1&scheme2&scheme1&scheme2&scheme1&scheme2 \\\hline
$1/10000$    &1.40   &39.84  &1.33   &40.34  &1.34   &36.54 \\
$1/20000$    &5.00   &173.30 &4.58   &170.11 &4.59  &173.44 \\
$1/30000$    &18.52  &394.21 &13.21  &389.22 &12.83  &393.53 \\
$1/40000$    &40.18  &684.65 &42.28  &692.23 &39.69 &700.94\\\hline
\end{tabular*}
{\rule{\temptablewidth}{0.7pt}}
\end{center}
\end{table}
\begin{figure}[t]
  \centering
  \caption{The comparison of memory between scheme1 and scheme2 with $h=\frac{\pi} {50}$, $\alpha=(1.6,1.5,1.2)$, $\lambda=(3,2,1)$ and $\epsilon_r=\tau^{4-\alpha_r}.$}\label{fg1}
  \includegraphics[scale=0.65]{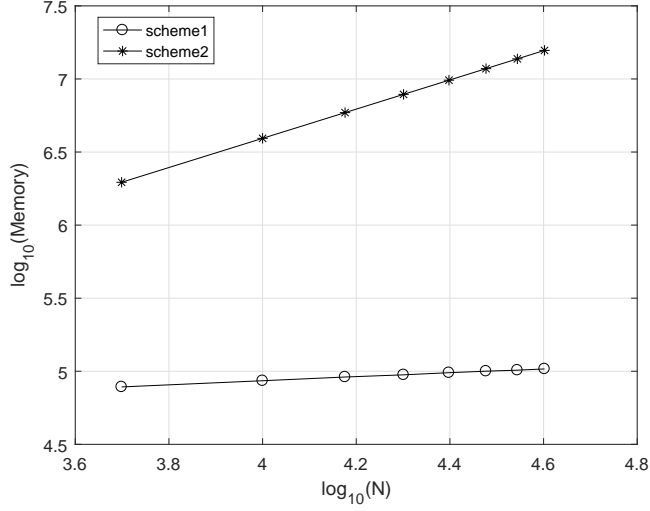}
\end{figure}

In Table \ref{table1}, taking serval sets of $\lambda$ and $\alpha$, with three different modeling cases, the $E_1(h,\tau)$ and the temporal convergence of scheme \eqref{sch1}-\eqref{sch4} are presented, which confirms the second-order convergence of the difference scheme with respect to temporal direction. 
In Table \ref{table2}, the numerical results are shown in temporal direction where the numerical results of three cases are also demonstrated and one can realize that scheme2 is of second-order convergence.
For the spatial direction, in Table \ref{table3}, the second-order accuracy of scheme \eqref{sch1}-\eqref{sch4} for three cases are also shown with fixed $\tau=\frac1{1000}.$ Above all, the chosen $\epsilon_r$ are $\tau^{4-\alpha_r}\times10^{-3}$, $r=0,1,2.$
Table \ref{table4} demonstrates CPU time in seconds of scheme1 and scheme2. Figure \ref{fg1}  presents the comparison between two schemes about the memory in bytes with different temporal step $\tau=\frac1{10000}$ to $\frac1{40000}$, and actually the results of three cases are the same. One can check that the scheme1 do require less time and memory and if $\tau$ is small enough, the comparison can be more evident.

\section{Conclusion}\label{conclusion}

We considered a fast and linearized finite difference method for solving the nonlinear multi-term time-fractional wave equation. The proposed scheme based on the fast ${\cal L}2$-$1_\sigma$ discretization, the multi-term ${\cal L}2$-$1_\sigma$ type discretization and a weighted approach. By showing some important properties of the refined coefficients of fully discretization, we obtained the truncation error of our proposed weighted discretization to the multi-term Caputo derivative, and we displayed the unconditional convergence  rigorously. The accuracy and efficiency of proposed method are well demonstrated by several numerical tests.


\end{document}